\newtheorem{theorem}{Theorem}[section]
\newtheorem{lemma}[theorem]{Lemma}
\newtheorem{remark}{Remark}
\newtheorem{assumption}{Assumption}
\DeclareMathOperator\supp{supp}
\subjclass{35R30,  62G20, 62F15}
 \keywords{nonlinear inverse problems, Bayesian inference, Gaussian processes, posterior consistency}
\title[The Bayesian approach to inverse Robin problems]{The Bayesian approach to inverse Robin problems}
\begin{document}

\maketitle

\centerline{\scshape Aksel Kaastrup Rasmussen}
{\footnotesize
   \centerline{Department of Applied Mathematics and Computer Science}
 \centerline{Technical University of Denmark}
 \centerline{\texttt{\href{mailto:akara@dtu.dk}{akara@dtu.dk}}}
}

\bigskip
\centerline{\scshape Fanny Seizilles}
{\footnotesize
 \centerline{Centre for Mathematical Sciences}
   \centerline{University of Cambridge}
}
\bigskip

\centerline{\scshape Mark Girolami}
{\footnotesize
 \centerline{Department of Engineering, University of Cambridge}
    \centerline{and The Alan Turing Institute, London}
} 
\bigskip

\centerline{\scshape Ieva Kazlauskaite}
{\footnotesize
 \centerline{Department of Statistical Science, UCL}
    \centerline{and Department of Engineering, University of Cambridge}
    \centerline{\texttt{\href{mailto:i.kazlauskaite@ucl.ac.uk}{i.kazlauskaite@ucl.ac.uk}}}
} 
\bigskip

\begin{abstract}
    In this paper we investigate the Bayesian approach to inverse Robin problems. These are problems for certain elliptic boundary value problems of determining a Robin coefficient on a hidden part of the boundary from Cauchy data on the observable part. Such a nonlinear inverse problem arises naturally in the initialisation of large-scale ice sheet models that are crucial in climate and sea-level predictions. We motivate the Bayesian approach for a prototypical Robin inverse problem by showing that the posterior mean converges in probability to the data-generating ground truth as the number of observations increase. Related to the stability theory for inverse Robin problems, we establish a logarithmic convergence rate for Sobolev-regular Robin coefficients, whereas for analytic coefficients we can attain an algebraic rate. The use of rescaled analytic Gaussian priors in posterior consistency for nonlinear inverse problems is new and may be of separate interest in other inverse problems. Our numerical results illustrate the convergence property in two observation settings. 
\end{abstract}

\ifpdf
\hypersetup{pdftitle={robin},
  pdfauthor={A. K. Rasmussen, F. Seizilles, M. Girolami, I. Kazlauskaite}}
\fi

\maketitle

\section{Introduction}
The Bayesian approach has in recent years gained traction as a powerful and flexible framework for solving inverse problems by allowing the user to model prior knowledge, regularize reconstructions and quantify uncertainty, see \cite{stuart2010}. In this paper, we investigate the Bayesian approach to an emerging class of nonlinear inverse problems known as inverse Robin problems.\\

Inverse Robin problems appear in boundary value problems for partial differential equations (PDEs), where the boundary is partitioned into at least two parts: a hidden and an observable part. The hidden part carries information of a boundary effect modelled by a Robin boundary condition.
Then the Robin inverse problem is the inverse problem of recovering the Robin coefficient from Dirichlet and Neumann data on the observable part of the boundary. Our focus will be on the inverse Robin problem for a scalar Laplace equation and a Stokes' system of equations. The former is an inverse problem also known as corrosion detection and was considered in the early contribution \cite{inglese1997}. The latter appears when initialising ice sheet models for climate and sea-level predictions. This inverse problem asks for the unknown basal drag coefficient of the ice sediment from observations of ice velocity at the surface, see \cite{arthern2010}. Addressing this inverse problem in a statistical framework is a crucial step in improving the robustness and accuracy of ice sheet models for future sea-level projections.\\

Reconstruction for the inverse Robin problem for the Laplace equation has been studied using classical regularization methods based on penalized least squares, see \cite{chaabane1999,liu2017,jin2007} and the references therein. In \cite{inglese1997,fasino1999} accurate direct methods are provided given that the domain is sufficiently thin. The problem has been posed in a Bayesian framework in \cite{nicholson2022}, which determines the Robin coefficient and the hidden boundary simultaneously. The related inverse Robin problem for the Stokes PDE has been considered in the Bayesian framework in \cite{arthern2015,nicholson2018,babaniyi2021}, whereas in the two latter works the framework is similar to the general approach in \cite{stuart2010}.\\

Despite the success of the Bayesian approach to inverse problems, different paths of theoretical guarantees have been explored only recently. Statistical convergence analysis for the posterior distribution in nonlinear inverse problems has seen a recent interest with the framework devised in \cite{monard2021} based on the work in \cite{ghosal2000}, see also \cite{nickl2023}. In this approach, the main conditions of the forward map are that of forward regularity: the data should be uniformly bounded and depend continuously on the parameter given that it is sufficiently smooth, and conditional (inverse) stability: the inverse of the forward map is continuous when restricted to a sufficiently small subset of the range. For forward regularity, we require a certain smoothness of solutions of the governing equation near the observable part of the boundary. This can be achieved by classical techniques in PDEs. Inverse stability results, however, rarely come cheap and require in-depth knowledge of the inverse problem at hand. For the Stokes model we consider, some conditional stability results have been developed, see Theorem 1.5 and Remark 3.7 in \cite{boulakia2013b}, which quantifies the unique continuation result of \cite{fabre1996}. 
Common for the inverse Robin problems is the fact that the spatially varying Robin coefficient $\beta$ enters in a Robin condition of the form $\partial_\nu u + \beta u = 0$ at the hidden boundary, where $\nu$ is the outgoing unit normal and $u$ is the solution of the governing equation. 
So if $u$ is known and nonzero here, the reconstruction is a matter of algebra: $\beta = -u^{-1}\partial_\nu u$. However, determining conditions for which $u$ in a Stokes' model is nonzero on the hidden boundary remains a largely unsolved problem, see \cite{boulakia2013b}. For this reason we motivate our approach for general Robin-type inverse problems by the prototypical model for the scalar Laplace equation, see \cite{chaabane1999}. It is not uncommon that methods used in solving the Robin inverse problem for the Laplace equation have stimulated the development of approaches for solving the corresponding problem for the Stokes model, see \cite{arthern2010} which makes use of the Kohn-Vogelius functional \cite{kohn1987}.\\

Inverse Robin problems are related to the Cauchy problem of determining a solution to a Laplace equation in a domain from Cauchy data on parts or the whole of the boundary. This is because the `known' in our inverse Robin problems consists of Cauchy data on a part of the boundary. 
The global Cauchy problem of determining the solution in the entire domain is known to be severely ill-posed since \cite{hadamard1953}. Conditional stability estimates of logarithmic kind exist for this global problem \cite[Theorem 1.9]{alessandrini2009,chaabane2004}, while for stability in the interior, Hölder estimates can be obtained, see Theorem 1.7 and Remark 1.8 of \cite{alessandrini2009}. Combining the latter with analytic continuation for `uniformly' analytic (in the sense of $\mathcal{R}_2(M)$ defined below) solutions near the hidden boundary, one obtains conditional Hölder stability for the inverse Robin problem. 
This is essentially the content of \cite{hu2015}, which we modify to our setting below in Lemma \ref{lemma:stability}. We note that in \cite{sincich2007} a Hölder stability estimate is obtained for the scalar Laplace equation, when the Robin coefficient is piecewise constant on \textit{a priori} known sets. 
Using properties of the derivative of the forward map, a Lipschitz stability for the inverse problem in Stokes' model has been established in \cite{egloffe2013} given that the Robin coefficient belongs to compact and convex subset of a finite dimensional vector space. Unlike \cite{boulakia2013b} it provides an estimate independent of observations of the pressure.\\

For inverse problems with regular forward maps and conditional stability estimates, guaranteeing the statistical convergence of the posterior distribution reduce to the choice of prior \cite{monard2021}. In this paper, we consider two classes of numerically tractable and popular choices of Gaussian process priors that fit the stability regimes of the inverse problem. Our theoretical results are two-fold: For a Matérn type Gaussian prior we show that as the number of observations increases, the posterior mean converges in probability to the true Robin coefficient given this has some Sobolev smoothness. Not surprisingly the convergence rate is logarithmic. On the other hand, if the Robin coefficient is analytic, the squared exponential Gaussian prior provides an algebraic rate of convergence. This is the content of Theorem \ref{thm:main-theorem} below. Our approach for the analytic set of parameters follows the approach in \cite{monard2021,ghosal2000,nickl2023} using results in \cite{vaart2009}. Unlike \cite{vaart2009}, which considers analytic Gaussian processes with a change at time scale, we consider `rescaled' (in the sense of \eqref{eq:prior-def}) Gaussian processes. In Lemma \ref{lemma:analytic-consistency} we show that such priors satisfy the usual conditions for posterior consistency. This result is new and may be of independent interest in other inverse problems when modelling analytic functions. This seems to be a useful \textit{a  priori} class of functions to consider for some stability estimates, see for example \cite{labreuche1999}.\\

We perform numerical experiments on the Laplace and Stokes models using Markov chain Monte Carlo (MCMC) methods and finite element methods. The experiments support the theoretical findings on the improvements in the reconstruction of the Robin coefficients as the number of observations increases.\\

In Section \ref{sec:inverse-robin-problems} we give the setting of two inverse Robin problems in the context of a Stokes system of PDEs and a Laplace equation in two dimensions. We state results on the regularity properties of the forward maps as well as conditional stability estimates. When not relying on existing results, these are proved in Appendix \ref{sec:forward-reg} and \ref{sec:cond-stab-est}. In Section \ref{sec:bayesian-approach} we recap the Bayesian approach to inverse problems, describe the observation model, and present the Matérn-type and squared exponential Gaussian priors. Here we also give the main result, Theorem \ref{thm:main-theorem}, which is proved in Appendix \ref{sec:cons-analytic}. In Section \ref{sec:experiments}, we present the numerical approach and results for the scalar Laplace model and the Stokes model.\\

In the following, we let random variables be defined on a probability space $(\Omega,\mathcal{F},\mathrm{Pr})$. For a metric space $\mathcal{X}$ the Borel $\sigma$-algebra is denoted by $\mathcal{B}(\mathcal{X})$. Given a random element $X:\Omega\rightarrow \mathcal{X}$ that is $\mathcal{F}-\mathcal{B}(\mathcal{X})$ measurable, we denote its \textit{law} or \textit{distribution} by the probability measure $\mathcal{L}(X)$ defined by $\mathcal{L}(X)(B)=\mathrm{Pr}(X^{-1}(B))$ for all $B\in \mathcal{B}(\mathcal{X})$.
\\
\section{Inverse Robin problems}\label{sec:inverse-robin-problems}
\subsection{Stokes' model}
We consider the constant viscosity Stokes ice sheet model for a velocity field $u: \mathcal{O}\rightarrow \mathbb{R}^d$ and pressure $p:\mathcal{O} \rightarrow \mathbb{R}$ in a bounded and smooth domain $\mathcal{O} \subset \mathbb{R}^d$, $d=2,3$,
\begin{equation}\label{eq:stokes}
    \begin{aligned}
    -\Delta u + \nabla p &= \rho g \qquad &&\text{ in $\mathcal{O}$},\\
    \nabla \cdot u &= 0 &&\text{ in $\mathcal{O}$},\\
    \partial_{\nu} u - p\nu &= h &&\text{ on $\Gamma_s$},\\
    \partial_{\nu} u - p\nu + \beta u &= 0 &&\text{ on $\Gamma_\beta$},
\end{aligned}
\end{equation}
where $\nu$ is the outward unit normal, $\rho$ is a constant density of the ice, $g$ is the gravitational field and $h$ is the prescribed boundary stress. Here $\Gamma_s$ and $\Gamma_\beta$ are disjoint and connected open subsets of the boundary such that $\partial \mathcal{O} = \overline{\Gamma}_s \cup \overline{\Gamma}_\beta$. We denote by $\Gamma$ an open subset of $\Gamma_s$, where we make our measurements. The Robin inverse problem is then to recover $\beta$ given $u|_\Gamma$, that is, to invert the nonlinear forward map
\[G: \beta\mapsto u|_\Gamma.\]
For physical accuracy, we assume $\beta$ is a positive function, $\beta\geq m_\beta>0$. We reparametrize the forward map to
\[\mathcal{G}(\theta):=G(m_\beta+e^\theta)=u|_\Gamma\]
defined on our parameter space 
\[\Theta:=H^1(\Gamma_\beta).\]
The choice of the parameter space $\Theta$ makes $\theta \mapsto \mathcal{G}(\theta)$ continuous into $(C(\overline{\Gamma}))^2$, which, as we shall see in Section \ref{sec:bayesian-approach}, leaves us with a well-defined posterior distribution. It follows from Lax-Milgram theory in the Hilbert space of divergence-free $(H^1(\mathcal{O}))^d$-functions that there exists a unique solution $u\in (H^1(\mathcal{O}))^d$ to \eqref{eq:stokes} for any positive and bounded $\beta$, hence the forward map is well-defined. Further, when $\beta$ is continuous, unique continuation results \cite[Corollary 1.2]{boulakia2013b} imply injectivity of $G$, see for example \cite[Proposition 3.3]{boulakia2013a}.  These facts are proven in the following lemma for the case $d=2$. For $d=3$ this follows in the same way, but for example for the choice $\Theta=H^2(\Gamma_\beta)$.

\begin{lemma}\label{lemma:well-posedness-stokes}
    Let $h\in (L^2(\Gamma_s))^2$, $\rho g\in (L^2(\mathcal{O}))^2$ and $\theta,\theta_1,\theta_2\in L^\infty(\Gamma_\beta)$. We have the following:
    \begin{enumerate}[label=(\roman*)]
    \item Set $\beta=\beta(\theta):=m_\beta+e^\theta$. Then there is a unique solution $u\in H^1(\mathcal{O}))^2$ to \eqref{eq:stokes}.
    \item If $\|\theta_1\|_{H^1(\Gamma_\beta)},\|\theta_2\|_{H^1(\Gamma_\beta)}\leq M$ and $\Gamma\subset \subset \Gamma_s$, then there exists $\alpha>0$ such that
    $$\|\mathcal{G}(\theta_1)-\mathcal{G}(\theta_2)\|_{(C(\overline{\Gamma}))^2}\leq C(\mathcal{O},m_\beta,h,\rho,g) \|\theta_1-\theta_2\|_{H^{1}(\Gamma_\beta)}^\alpha.$$
    \item $\Theta\ni \theta \mapsto \mathcal{G}(\theta) \in (C(\overline{\Gamma}))^2$ is injective.
\end{enumerate}
\end{lemma}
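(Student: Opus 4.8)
The plan is to treat the three items in sequence, with item (i) by standard variational arguments, item (ii) as the substantive analytic core, and item (iii) as a short consequence of unique continuation. For item (i), I would set up the weak formulation of \eqref{eq:stokes} on the closed subspace $V:=\{v\in (H^1(\mathcal{O}))^2 : \nabla\cdot v=0\}$, writing the bilinear form $a(u,v)=\int_\mathcal{O}\nabla u:\nabla v + \int_{\Gamma_\beta}\beta\, u\cdot v$ and the load functional $\ell(v)=\int_\mathcal{O}\rho g\cdot v + \int_{\Gamma_s} h\cdot v$; the pressure is eliminated by the divergence-free test functions and recovered afterwards by a de Rham / inf-sup argument. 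Coercivity of $a$ on $V$ follows from a Poincaré-type (Korn-free, since we have the full gradient) inequality using that $\beta\geq m_\beta>0$ on $\Gamma_\beta$, which is a set of positive measure; boundedness of $a$ and $\ell$ uses $\beta\in L^\infty$, $h\in (L^2(\Gamma_s))^2$, $\rho g\in (L^2(\mathcal{O}))^2$ and the trace theorem. Lax–Milgram then gives existence and uniqueness of $u\in V$, and hence the forward map is well-defined.

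For item (ii), the goal is a Hölder-type modulus of continuity for $\mathcal{G}$ in the uniform norm on $\overline{\Gamma}$. First I would reduce to a bound on $w:=u_1-u_2$, where $u_j$ solves \eqref{eq:stokes} with $\beta_j=m_\beta+e^{\theta_j}$: subtracting the two systems, $w$ solves a Stokes system with zero body force, homogeneous Neumann data on $\Gamma_s$, and a Robin-type condition on $\Gamma_\beta$ with inhomogeneity $(\beta_2-\beta_1)u_2$. An energy estimate of the type in item (i), applied to $w$, gives $\|w\|_{(H^1(\mathcal{O}))^2}\lesssim \|(\beta_1-\beta_2)u_2\|_{(L^2(\Gamma_\beta))^2}\lesssim \|\theta_1-\theta_2\|_{L^2(\Gamma_\beta)}\|u_2\|$, where the Lipschitz bound $|e^{\theta_1}-e^{\theta_2}|\leq e^{\max(\theta_1,\theta_2)}|\theta_1-\theta_2|$ and the a priori $H^1(\Gamma_\beta)\hookrightarrow L^\infty(\Gamma_\beta)$ bound (using $\|\theta_j\|_{H^1}\leq M$, in dimension two) control the coefficient. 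This already gives an $H^1(\mathcal{O})$ bound that is \emph{linear} in $\|\theta_1-\theta_2\|_{H^1(\Gamma_\beta)}$. To upgrade from the interior $H^1$ norm to the $C(\overline{\Gamma})$ norm on $\Gamma\subset\subset\Gamma_s$, I would invoke interior/boundary elliptic regularity for the Stokes system near $\Gamma_s$ (away from the corners $\overline{\Gamma}_s\cap\overline{\Gamma}_\beta$), which is legitimate because $\Gamma$ is compactly contained in $\Gamma_s$: this promotes $w$ to, say, $H^{2}$ or $C^0$ locally near $\Gamma$, with the constant depending on $\mathcal{O}$, $m_\beta$, $h$, $\rho$, $g$ and on $M$ only through the a priori bound. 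If the elliptic-regularity step only yields the estimate up to an interpolation loss one still obtains a positive power $\alpha$, which is all that is claimed; if it is clean one gets $\alpha=1$. I expect this passage from global energy control to local uniform control — tracking that the constant genuinely does not blow up as $\theta_j$ ranges over the $H^1$-ball, and handling the geometry near the interface between $\Gamma_s$ and $\Gamma_\beta$ — to be the main obstacle, and the reason the statement is phrased with an unspecified exponent $\alpha$.

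For item (iii), suppose $\mathcal{G}(\theta_1)=\mathcal{G}(\theta_2)$, i.e.\ $u_1|_\Gamma=u_2|_\Gamma$. Since $u_1$ and $u_2$ satisfy the same Neumann condition $\partial_\nu u - p\nu = h$ on $\Gamma_s\supset\Gamma$, the difference $w=u_1-u_2$ has both zero Dirichlet and zero (generalized) Neumann trace on $\Gamma$, with $w$ solving a homogeneous Stokes system in $\mathcal{O}$. By the unique continuation principle for the Stokes system \cite[Corollary 1.2]{boulakia2013b} (cf.\ \cite[Proposition 3.3]{boulakia2013a}), $w\equiv 0$ in $\mathcal{O}$, so $u_1=u_2$ throughout; in particular the traces and normal derivatives agree on $\Gamma_\beta$. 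Evaluating the Robin condition $\partial_\nu u_j - p_j\nu + \beta_j u_j = 0$ on $\Gamma_\beta$ and subtracting gives $(\beta_1-\beta_2)u_1=0$ on $\Gamma_\beta$; since (again by unique continuation, $u_1$ cannot vanish on an open subset of $\Gamma_\beta$ without vanishing identically, which is excluded by the nontrivial data $h$) we conclude $\beta_1=\beta_2$ on $\Gamma_\beta$, hence $e^{\theta_1}=e^{\theta_2}$ and $\theta_1=\theta_2$, proving injectivity.
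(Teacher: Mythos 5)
Your proposal is correct and follows essentially the same route as the paper: Lax--Milgram on the divergence-free subspace with the pressure recovered via de Rham for (i); the difference equation with Robin inhomogeneity $(\beta_2-\beta_1)u_2$, an energy estimate linear in $\|\theta_1-\theta_2\|_{H^1(\Gamma_\beta)}$, and a localized elliptic-regularity-plus-interpolation upgrade near $\Gamma\subset\subset\Gamma_s$ yielding the Hölder exponent $\alpha<1$ for (ii); and unique continuation from vanishing Cauchy data on $\Gamma$ for (iii), which the paper simply delegates to \cite[Proposition 3.3]{boulakia2013a} but whose content you reproduce accurately (including the point that $u_1$ cannot vanish on an open subset of $\Gamma_\beta$, combined with continuity of $\beta_i$ from the embedding $H^1(\Gamma_\beta)\hookrightarrow C(\overline{\Gamma}_\beta)$ in one dimension). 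The only cosmetic imprecision is the intermediate bound $\|(\beta_1-\beta_2)u_2\|_{(L^2(\Gamma_\beta))^2}\lesssim\|\theta_1-\theta_2\|_{L^2(\Gamma_\beta)}\|u_2\|$, where the coefficient difference must in fact be controlled in $L^\infty(\Gamma_\beta)$ (equivalently via its $H^1(\Gamma_\beta)$ norm, as the paper does through a multiplication lemma), which you effectively acknowledge by invoking the embedding $H^1(\Gamma_\beta)\hookrightarrow L^\infty(\Gamma_\beta)$.
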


\subsection{Scalar Laplace equation}
Consider the following Laplace equation for a potential $u:\mathcal{O}\rightarrow \mathbb{R}$, surface normal current $h\in H^{-1/2}(\Gamma)$ and Robin coefficient $\beta\in L^\infty(\Gamma_\beta)$,
\begin{equation}\label{eq:laplace}
    \begin{aligned}
    \Delta u &= 0 \qquad &&\text{ in $\mathcal{O}$},\\
    \partial_{\nu} u &= h &&\text{ on $\Gamma$},\\
    u &= 0 && \text{ on $\Gamma_0$},\\
    \partial_{\nu} u + \beta u &= 0 &&\text{ on $\Gamma_\beta$},
\end{aligned}
\end{equation}
where $\partial \mathcal{O} = \overline{\Gamma} \cup \overline{\Gamma}_0 \cup \overline{\Gamma}_\beta$. Here a homogeneous Dirichlet condition is introduced for the stability estimate in \cite{alessandrini2003}, which we use in Lemma \ref{lemma:stability} below. As before our goal is to invert the reparametrized forward map
\begin{equation}\label{eq:forward-map}
   \mathcal{G}(\theta):=G(m_\beta+e^{\theta})=u|_\Gamma,
\end{equation}
where we with a slight misuse of notation keep the notation $G$ and $\mathcal{G}$ for this model.\\

\begin{figure}
\centering
\tikzset{every picture/.style={line width=0.75pt}} 

\scalebox{0.75}{
\begin{tikzpicture}[x=0.75pt,y=0.75pt,yscale=-1,xscale=1]

\draw  [fill={rgb, 255:red, 74; green, 144; blue, 226 }  ,fill opacity=0.34 ] (107,106) .. controls (107,92.75) and (117.75,82) .. (131,82) -- (313,82) .. controls (326.25,82) and (337,92.75) .. (337,106) -- (337,178) .. controls (337,191.25) and (326.25,202) .. (313,202) -- (131,202) .. controls (117.75,202) and (107,191.25) .. (107,178) -- cycle ;
\draw [color={rgb, 255:red, 155; green, 155; blue, 155 }  ,draw opacity=1 ][line width=2.25]    (131,202) .. controls (103,201) and (107,178) .. (107,144) .. controls (107,110) and (102,82) .. (131,82) ;
\draw [color={rgb, 255:red, 155; green, 155; blue, 155 }  ,draw opacity=1 ][line width=2.25]    (313,202) .. controls (342,201) and (337,179) .. (337,145) .. controls (337,111) and (343,82) .. (313,82) ;
\draw [color={rgb, 255:red, 0; green, 58; blue, 255 }  ,draw opacity=1 ][line width=2.25]    (131,82) -- (313,82) ;
\draw [color={rgb, 255:red, 0; green, 0; blue, 0 }  ,draw opacity=1 ][line width=2.25]    (131,202) -- (313,202) ;
\draw [color={rgb, 255:red, 128; green, 128; blue, 128 }  ,draw opacity=1 ]   (390,201) -- (418,201) ;
\draw [shift={(420,201)}, rotate = 180] [color={rgb, 255:red, 128; green, 128; blue, 128 }  ,draw opacity=1 ][line width=0.75]    (4.37,-1.96) .. controls (2.78,-0.92) and (1.32,-0.27) .. (0,0) .. controls (1.32,0.27) and (2.78,0.92) .. (4.37,1.96)   ;
\draw [color={rgb, 255:red, 128; green, 128; blue, 128 }  ,draw opacity=1 ]   (390,201) -- (390,172) ;
\draw [shift={(390,170)}, rotate = 90] [color={rgb, 255:red, 128; green, 128; blue, 128 }  ,draw opacity=1 ][line width=0.75]    (4.37,-1.96) .. controls (2.78,-0.92) and (1.32,-0.27) .. (0,0) .. controls (1.32,0.27) and (2.78,0.92) .. (4.37,1.96)   ;

\draw (211,132) node [anchor=north west][inner sep=0.75pt]   [align=left] {$\displaystyle \mathcal{O}$};
\draw (213,55.4) node [anchor=north west][inner sep=0.75pt]    {$\textcolor[rgb]{0,0.23,1}{\Gamma }\textcolor[rgb]{0,0.23,1}{_{s}}$};
\draw (211,209.4) node [anchor=north west][inner sep=0.75pt]    {$\Gamma _{\beta }$};
\draw (400,202.4) node [anchor=north west][inner sep=0.75pt]  [font=\footnotesize]  {$\textcolor[rgb]{0.5,0.5,0.5}{x}$};
\draw (375,182.4) node [anchor=north west][inner sep=0.75pt]  [font=\footnotesize]  {$\textcolor[rgb]{0.5,0.5,0.5}{y}$};
\draw (77,131.4) node [anchor=north west][inner sep=0.75pt]  [color={rgb, 255:red, 155; green, 155; blue, 155 }  ,opacity=1 ]  {$\Gamma _{0}$};
\draw (355,132.4) node [anchor=north west][inner sep=0.75pt]  [color={rgb, 255:red, 155; green, 155; blue, 155 }  ,opacity=1 ]  {$\Gamma _{0}$};

\end{tikzpicture}
}
\caption{Diagram of the domain}
\label{fig:domain}
\end{figure}
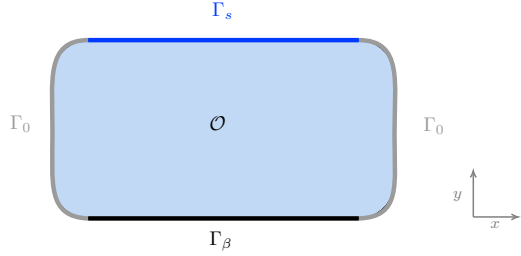

\begin{assumption}[Domain]\label{assump:domain}
    We assume $\Gamma_\beta = (0,1) \times \{0\}$ and define $\Gamma_{\beta,\epsilon}:=(\epsilon,1-\epsilon)\times \{0\}$ for some $0<\epsilon<1$. Furthermore, we assume
    $\partial \mathcal{O}$ is a simple closed curve decomposed into four subarcs oriented as $\Gamma_\beta$, $\Gamma_0^1$, $\Gamma$, $\Gamma_0^2$, and where $\Gamma_0 = \Gamma_0^1\cup \Gamma_0^2$.
\end{assumption}
We assume $\Gamma_\beta=(0,1) \times \{0\}$ since we want to avoid defining Gaussian processes on manifolds. Occasionally, we identify $\Gamma_\beta$ with $(0,1)\subset \mathbb{R}$. For the two stability estimates in Lemma \ref{lemma:stability} we could generalize to a $C^2$ or analytic $\Gamma_\beta$, respectively. To analyze the stability of the forward map we find it useful to restrict it to two well-chosen bounded and closed subsets of $\Theta$.
\begin{assumption}\label{assump:beta}
Assume first $\beta = \beta(\theta) := m_\beta + e^{\theta}$ for $\theta \in \Theta$ with
$$\Theta := H^1(\Gamma_\beta).$$
Depending on the setting we accept either of the two following assumptions for some $M>0$:
\begin{enumerate}[label=(\roman*)]
    \item Assume $\theta \in \mathcal{R}_1(M)$ with
    $$\mathcal{R}_1(M):=\{\theta \in H^1(\Gamma_\beta): \|\theta\|_{H^1(\Gamma_\beta)}\leq M\}$$
    \item Assume $\theta \in \mathcal{R}_2(M)$ with
    $$ \mathcal{R}_2(M):=\{\theta \in C^\infty(\overline{\Gamma}_\beta): \|\theta\|_{L^\infty(\Gamma_\beta)}\leq M, \sup_{x\in \overline{\Gamma}_\beta}|(\partial^k\beta)(x)|\leq M (k!)M^k\}$$
\end{enumerate}
\end{assumption}
It is well-known that $\beta$ is analytic on $\overline{\Gamma}_\beta$ if and only if $\beta \in C^\infty(\overline{\Gamma}_\beta)$ with 
$$\sup_{x \in \overline{\Gamma}_\beta} |(\partial^k \beta)(x)|\leq M_\beta(k!)M_\beta^k$$ 
for some $M_\beta>0$, see \cite[chapter 1]{krantz2002}. We can think of $\mathcal{R}_2(M)$ as functions that are `uniformly' analytic with the added condition $\|\theta\|_{L^\infty(\Gamma_\beta)}\leq M$ to ensure $\theta\mapsto \beta$ is continuous in both directions. The sets $\mathcal{R}_1(M)$ and $\mathcal{R}_2(M)$ are exactly the `regularization sets' for which stability results for the inverse problem are available, see Lemma \ref{lemma:stability}. To this end, we make the following assumption.

\begin{assumption}\label{assump:h}
We assume that $h$ is not identical to a constant and that
$h \in H_1:=\{h\in H^{1/2}(\Gamma): h\geq 0, \|h\|_{H^{1/2}}\leq M_h\}$ for some $M_h>0$.
\end{assumption}
The positivity assumption is only needed for the stability estimate stated in Lemma \ref{lemma:stability} (ii), and it might be avoided as in \cite{alessandrini2003}. In the following we prove a number of auxiliary results, where we specify sufficient conditions on $\theta$ and $\beta$. First, we note that the forward map is well-defined.
\begin{lemma}\label{lemma:classicH1}
For $\beta=\beta(\theta)$ with $\theta\in L^\infty(\Gamma_\beta)$ and $h\in H^{-1/2}(\Gamma)$, there exists a unique solution $u \in H^1(\mathcal{O})$ of \eqref{eq:laplace} with
\begin{equation}\label{eq:u-estH1}
    \|u\|_{H^1(\mathcal{O})} \leq C\|h\|_{H^{-1/2}(\partial \mathcal{O})},
\end{equation}
for some constant $C=C(\mathcal{O},m_\beta)>0$. 
\end{lemma}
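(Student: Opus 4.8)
The plan is to recast \eqref{eq:laplace} as a variational problem on the closed subspace
\[
V:=\{v\in H^1(\mathcal{O}): v|_{\Gamma_0}=0\}\subset H^1(\mathcal{O})
\]
and apply the Lax--Milgram lemma. Multiplying $\Delta u=0$ by $v\in V$, integrating by parts over $\mathcal{O}$, and inserting $\partial_\nu u=h$ on $\Gamma$, $v=0$ on $\Gamma_0$, and $\partial_\nu u=-\beta u$ on $\Gamma_\beta$, one is led to call $u\in V$ a \emph{weak} solution of \eqref{eq:laplace} when
\[
a(u,v):=\int_{\mathcal{O}}\nabla u\cdot\nabla v\,dx+\int_{\Gamma_\beta}\beta\,u v\,dS=\langle h,v|_\Gamma\rangle=:\ell(v)\qquad\text{for all }v\in V,
\]
where $\langle\cdot,\cdot\rangle$ denotes the $H^{-1/2}(\Gamma)$--$H^{1/2}(\Gamma)$ duality pairing; conversely, any such $u$ satisfies \eqref{eq:laplace} in the distributional and trace sense, so it suffices to produce a unique $u\in V$ solving this identity.

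Next I would verify the Lax--Milgram hypotheses. Boundedness of $\ell$ on $V$ follows from the trace estimate $\|v|_\Gamma\|_{H^{1/2}(\Gamma)}\le C(\mathcal{O})\|v\|_{H^1(\mathcal{O})}$, which gives $|\ell(v)|\le C(\mathcal{O})\,\|h\|_{H^{-1/2}}\,\|v\|_{H^1(\mathcal{O})}$, the norm of $h$ being over $\Gamma$ and controlled by $\|h\|_{H^{-1/2}(\partial\mathcal{O})}$ via extension by zero. Boundedness of $a$ on $V\times V$ is Cauchy--Schwarz together with the $L^2(\Gamma_\beta)$ trace inequality and $\beta=m_\beta+e^\theta\in L^\infty(\Gamma_\beta)$; the constant here depends on $\|\theta\|_{L^\infty}$, but that is harmless since boundedness of $a$ is only needed qualitatively to invoke the theorem.

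The crucial point, and the only step that is not purely mechanical, is coercivity of $a$ on $V$ with a constant depending only on $\mathcal{O}$ and $m_\beta$. Since $\beta\ge m_\beta>0$ pointwise,
\[
a(v,v)\ge\|\nabla v\|_{L^2(\mathcal{O})}^2+m_\beta\|v\|_{L^2(\Gamma_\beta)}^2\ge\min(1,m_\beta)\bigl(\|\nabla v\|_{L^2(\mathcal{O})}^2+\|v\|_{L^2(\Gamma_\beta)}^2\bigr),
\]
and a Poincar\'e--Friedrichs inequality --- valid on all of $H^1(\mathcal{O})$ because $\Gamma_\beta\subset\partial\mathcal{O}$ has positive surface measure, proved by the standard Rellich compactness/contradiction argument --- yields $\|v\|_{H^1(\mathcal{O})}^2\le C(\mathcal{O})\bigl(\|\nabla v\|_{L^2(\mathcal{O})}^2+\|v\|_{L^2(\Gamma_\beta)}^2\bigr)$, hence $a(v,v)\ge c(\mathcal{O},m_\beta)\|v\|_{H^1(\mathcal{O})}^2$. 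Lax--Milgram then produces a unique $u\in V$ with $a(u,\cdot)=\ell$, together with the a priori bound $\|u\|_{H^1(\mathcal{O})}\le\|\ell\|_{V^*}/c(\mathcal{O},m_\beta)\le C(\mathcal{O},m_\beta)\|h\|_{H^{-1/2}(\partial\mathcal{O})}$, which is \eqref{eq:u-estH1}. Uniqueness within the class of $H^1(\mathcal{O})$ functions respecting the Dirichlet condition follows by applying coercivity to the difference of two solutions. I expect the main obstacle to be nothing deeper than the bookkeeping that keeps the final constant independent of $\theta$ and the invocation of the boundary Poincar\'e inequality; no new ideas are required.
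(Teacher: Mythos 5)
Your proposal is correct and follows essentially the same route as the paper: both pass to the variational formulation on $V=\{v\in H^1(\mathcal{O}):v|_{\Gamma_0}=0\}$, obtain coercivity from $\beta\ge m_\beta>0$ together with a generalized Poincar\'e inequality controlling $\|v\|_{H^1(\mathcal{O})}$ by $\|\nabla v\|_{L^2(\mathcal{O})}+\|v\|_{L^2(\Gamma_\beta)}$, and conclude by Lax--Milgram. The only cosmetic difference is that the paper states the variational problem with an additional inhomogeneous Robin datum $\tilde h$ on $\Gamma_\beta$ (for later reuse) and cites the Poincar\'e inequality from the literature rather than sketching the compactness argument.
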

Secondly, the forward map is Lipschitz continuous on certain bounded sets of $\Theta$ and the observations are uniformly bounded.
\begin{lemma}\label{lemma:forward-reg}
    Let $h\in H_1$ and $\theta_1,\theta_2 \in L^\infty(\Gamma_\beta)$. Then,
    \begin{enumerate}[label=(\roman*)]
    \item if $\beta=\beta(\theta_1)$ and $\|\beta(\theta_1)\|_{H^1(\Gamma_\beta)}\leq M$ we have
    $$\|\mathcal{G}(\theta_1)\|_{C(\overline{\Gamma})}\leq U(\mathcal{O},m_\beta,M,M_h),$$
    \item if $\beta_i=\beta(\theta_i)$ for $i=1,2$ with $\|\theta_1\|_{L^\infty(\Gamma_\beta)},\|\theta_2\|_{L^\infty(\Gamma_\beta)}\leq M$ then 
    $$\|\mathcal{G}(\theta_1)-\mathcal{G}(\theta_2)\|_{L^2(\Gamma)}\leq K(\mathcal{O},m_\beta,M)\|\theta_1-\theta_2\|_{L^\infty(\Gamma_\beta)}.$$
    \item if $\theta_1,\theta_2\in \mathcal{R}_1(M)$ then there exists an $\alpha>0$ such that
    $$\|\mathcal{G}(\theta_1)-\mathcal{G}(\theta_2)\|_{C(\overline{\Gamma})}\leq C(\mathcal{O},m_\beta,M)\|\theta-\theta'\|_{L^\infty(\Gamma_\beta)}^\alpha.$$
\end{enumerate}
\end{lemma}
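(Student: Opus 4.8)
The plan is to prove the three parts in order, using Lemma \ref{lemma:classicH1} as the workhorse together with trace theorems and elliptic regularity near $\Gamma$. For part (i), I would first control $\|u\|_{H^1(\mathcal{O})}$ by $\|h\|_{H^{-1/2}(\partial\mathcal{O})} \lesssim \|h\|_{H^{1/2}(\Gamma)} \leq M_h$ via \eqref{eq:u-estH1}. The point is then to upgrade this to a uniform $C(\overline{\Gamma})$ bound. Since $u$ is harmonic in $\mathcal{O}$ and $\Gamma$ is an open piece of the boundary bounded away from $\Gamma_0\cup\Gamma_\beta$, I would use interior-plus-boundary elliptic estimates: on a neighbourhood $\mathcal{O}'$ of $\overline{\Gamma}$ within $\mathcal{O}$ whose closure avoids $\overline{\Gamma}_0\cup\overline{\Gamma}_\beta$, the Neumann data $h\in H^{1/2}(\Gamma)$ and the equation $\Delta u=0$ give $u\in H^2(\mathcal{O}')$ with $\|u\|_{H^2(\mathcal{O}')}\leq C(\|u\|_{H^1(\mathcal{O})}+\|h\|_{H^{1/2}(\Gamma)})$; in two dimensions the Sobolev embedding $H^2(\mathcal{O}')\hookrightarrow C(\overline{\mathcal{O}'})$ restricted to $\overline{\Gamma}$ yields the claimed bound $U(\mathcal{O},m_\beta,M,M_h)$. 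Here the dependence on $M$ enters only through the constant in \eqref{eq:u-estH1} (which depends on $m_\beta$) and the constant in the local elliptic estimate; the hypothesis $\|\beta(\theta_1)\|_{H^1(\Gamma_\beta)}\leq M$ ensures the coefficient in the Robin condition stays in a fixed bounded set so that the local estimate constant can be taken uniform.

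For part (ii), write $w:=u_1-u_2$ where $u_i$ solves \eqref{eq:laplace} with $\beta_i=\beta(\theta_i)$. Then $\Delta w=0$ in $\mathcal{O}$, $\partial_\nu w=0$ on $\Gamma$, $w=0$ on $\Gamma_0$, and on $\Gamma_\beta$ one gets $\partial_\nu w+\beta_1 w=(\beta_2-\beta_1)u_2$. So $w$ solves a Laplace-Robin problem with zero Neumann data on $\Gamma$ and right-hand side $(\beta_2-\beta_1)u_2$ in the Robin condition. Applying the variational estimate of Lemma \ref{lemma:classicH1} to this problem (with the inhomogeneity now on $\Gamma_\beta$ rather than $\Gamma$) gives $\|w\|_{H^1(\mathcal{O})}\leq C(\mathcal{O},m_\beta)\|(\beta_2-\beta_1)u_2\|_{H^{-1/2}(\Gamma_\beta)}$. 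Using the algebra/multiplier bound $\|(\beta_1-\beta_2)u_2\|_{L^2(\Gamma_\beta)}\leq \|\beta_1-\beta_2\|_{L^\infty(\Gamma_\beta)}\|u_2\|_{L^2(\Gamma_\beta)}$, together with $\|\beta_1-\beta_2\|_{L^\infty}=\|e^{\theta_1}-e^{\theta_2}\|_{L^\infty}\leq e^{M}\|\theta_1-\theta_2\|_{L^\infty(\Gamma_\beta)}$ (mean value theorem, using $\|\theta_i\|_{L^\infty}\leq M$) and the trace bound $\|u_2\|_{L^2(\Gamma_\beta)}\lesssim \|u_2\|_{H^1(\mathcal{O})}\leq C\|h\|_{H^{-1/2}}\leq CM_h$ from Lemma \ref{lemma:classicH1}, one obtains $\|w\|_{H^1(\mathcal{O})}\leq K\|\theta_1-\theta_2\|_{L^\infty(\Gamma_\beta)}$; finally $\|w\|_{L^2(\Gamma)}\lesssim\|w\|_{H^1(\mathcal{O})}$ by the trace theorem, giving the constant $K(\mathcal{O},m_\beta,M)$ (the dependence on $M_h$ can be absorbed since $h$ ranges over the fixed set $H_1$, or listed explicitly).

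Part (iii) combines (i), (ii) and interpolation. For $\theta_1,\theta_2\in\mathcal{R}_1(M)$, part (i) applies (since $\|\theta_i\|_{H^1}\leq M$ controls $\|\beta(\theta_i)\|_{H^1}$), so both $\mathcal{G}(\theta_i)$ lie in a fixed ball of $C(\overline{\Gamma})$; moreover the $H^2(\mathcal{O}')$ estimate from part (i) applied to $w=u_1-u_2$ (whose Neumann data on $\Gamma$ vanishes) gives $\|w\|_{H^2(\mathcal{O}')}\leq C(\mathcal{O},m_\beta,M)$ uniformly. I would then interpolate: on $\overline{\Gamma}$, by the Gagliardo–Nirenberg / Sobolev interpolation inequality in two dimensions, $\|w\|_{C(\overline{\Gamma})}\lesssim \|w\|_{H^2(\mathcal{O}')}^{1-\alpha}\|w\|_{L^2(\mathcal{O}')}^{\alpha}$ for a suitable $\alpha\in(0,1)$ (any $\alpha<1/2$ works, coming from $C^0\hookleftarrow H^s$, $s>1$, interpolated between $H^2$ and $L^2$ and then restricted to the one-dimensional $\overline{\Gamma}$ via a trace), and $\|w\|_{L^2(\mathcal{O}')}\leq\|w\|_{H^1(\mathcal{O})}\leq K\|\theta_1-\theta_2\|_{L^\infty(\Gamma_\beta)}$ by part (ii)'s computation. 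Combining, $\|\mathcal{G}(\theta_1)-\mathcal{G}(\theta_2)\|_{C(\overline{\Gamma})}\leq C\|\theta_1-\theta_2\|_{L^\infty(\Gamma_\beta)}^{\alpha}$, as claimed (I note the statement's ``$\theta-\theta'$'' should read ``$\theta_1-\theta_2$'').

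The main obstacle I anticipate is making the local elliptic regularity estimate near $\Gamma$ genuinely uniform over the parameter set — i.e. ensuring the constant in the $H^2(\mathcal{O}')$ bound does not secretly depend on $\beta$ through the far-away Robin condition. This is handled by the geometric separation in Assumption \ref{assump:domain} ($\Gamma\subset\subset\Gamma_s$, or $\overline{\Gamma}$ disjoint from $\overline{\Gamma}_\beta\cup\overline{\Gamma}_0$): one localizes with a cutoff supported in $\mathcal{O}'$ so that only the interior equation $\Delta u=0$ and the Neumann condition on $\Gamma$ are seen, and $\beta$ enters solely via the a priori $H^1(\mathcal{O})$ bound from Lemma \ref{lemma:classicH1}, whose constant depends only on $(\mathcal{O},m_\beta)$. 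A secondary technical point is tracking the exponent $\alpha$ in part (iii) and confirming it can be taken independent of $M$, which follows because the interpolation exponent depends only on the fixed Sobolev indices and the dimension.
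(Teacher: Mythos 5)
Parts (ii) and (iii) of your argument follow the paper's route in outline (difference equation with Robin data $u_2(\beta_2-\beta_1)$, energy estimate, then interpolation between an a priori smoothness bound and the Lipschitz $H^1$ bound), but there is a genuine gap in the smoothness input you feed into (i) and (iii). You claim a uniform $H^2(\mathcal{O}')$ estimate on a neighbourhood $\mathcal{O}'$ of $\overline{\Gamma}$ ``whose closure avoids $\overline{\Gamma}_0\cup\overline{\Gamma}_\beta$''. No such neighbourhood exists in this model: by Assumption \ref{assump:domain} the boundary is a simple closed curve decomposed into the four consecutive arcs $\Gamma_\beta,\Gamma_0^1,\Gamma,\Gamma_0^2$, so the two endpoints of $\overline{\Gamma}$ lie in $\overline{\Gamma}_0$. (The hypothesis $\Gamma\subset\subset\Gamma_s$ that you invoke belongs to the Stokes lemma, Lemma \ref{lemma:well-posedness-stokes}(ii), not to the Laplace setting.) At those two corner points the boundary condition switches from Neumann to Dirichlet, and solutions of such mixed (Zaremba-type) problems are generically only in $H^{3/2-\epsilon}$ near the transition, so a localized $H^2$ estimate up to $\overline{\Gamma}$ fails, and with it your Sobolev embedding into $C(\overline{\Gamma})$ and the $H^2$ endpoint of your interpolation in (iii). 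If you instead shrink to a compact subset of the open arc $\Gamma$ you lose control of the $C(\overline{\Gamma})$ norm at the endpoints, which is exactly what the lemma asserts.

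The paper circumvents this with Lemma \ref{lemma:higher-reg}: it lifts the inhomogeneous data by a trace right-inverse and then uses the corner-singularity decomposition of \cite{banasiak1989} to obtain a \emph{global} uniform bound $\|u\|_{H^{1+s}(\mathcal{O})}\leq C$ for any $s<1/2$ (but not more). Part (i) then follows because the trace of $H^{1+s}(\mathcal{O})$ on the one-dimensional arc $\Gamma$ lies in $H^{1/2+s}(\Gamma)\hookrightarrow C(\overline{\Gamma})$ for $s>0$, and part (iii) follows by interpolating between $H^1(\mathcal{O})$ (where your Lipschitz bound from (ii) lives) and $H^{1+1/4}(\mathcal{O})$ (where the uniform bound lives), landing in $H^{1+1/8}(\mathcal{O})$ and tracing to $C(\overline{\Gamma})$. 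Your part (ii) is essentially the paper's argument and is fine; to repair (i) and (iii) you need to replace the $H^2$-near-$\Gamma$ claim with a global $H^{1+s}$, $s<1/2$, estimate of this kind (or some other device that controls the solution uniformly at the Dirichlet--Neumann corners).
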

The following result is that of conditional (inverse) stability, where the condition is either $\theta\in \mathcal{R}_1(M)$ or $\theta\in \mathcal{R}_2(M)$.
\begin{lemma}[Conditional stability]\label{lemma:stability}
Let $\mathcal{O}$ satisfy Assumption \ref{assump:domain} and $h$ satisfy Assumption \ref{assump:h}. 
 \begin{enumerate}[label=(\roman*)]
    \item If $\theta_i\in \mathcal{R}_1(M)$, $i=1,2,$ then there exists constants $K_1>0$ and $0<\sigma_1<1$ such that
    $$\|\theta_1-\theta_2\|_{L^\infty(\Gamma_{\beta,\epsilon})}\leq K_1 | \log (\|\mathcal{G}(\theta_1)-\mathcal{G}(\theta_2)\|_{L^2(\Gamma)}) |^{-\sigma_1},$$
    where $K_1$ and $\sigma_1$ depend only on $\mathcal{O}$, $h$, $m_\beta$, $M$ and $\epsilon$.
    \item If $\theta_i\in \mathcal{R}_2(M)$, $i=1,2$, then there exists constants $K_2>0$ and $0<\sigma_2<1$ such that
    $$\|\theta_1-\theta_2\|_{L^2(\Gamma_{\beta,\epsilon})}\leq K_2\|\mathcal{G}(\theta_1)-\mathcal{G}(\theta_2)\|_{L^2(\Gamma)}^{\sigma_2},$$
    where $K_2$ and $\sigma_2$ depend  $\mathcal{O}$, $M_h$, $M$ and $\epsilon$.
 \end{enumerate}
\end{lemma}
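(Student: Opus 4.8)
The plan is to reduce the estimate to a Cauchy (unique continuation) problem for the difference of the two PDE solutions and to convert the resulting smallness on $\Gamma_{\beta,\epsilon}$ into smallness of $\theta_1-\theta_2$ through the Robin boundary condition. Write $u_i$ for the solution of \eqref{eq:laplace} with $\beta=\beta(\theta_i)$, $i=1,2$, and set $w:=u_1-u_2$. Subtracting the two boundary value problems, $w$ is harmonic in $\mathcal{O}$, $w=0$ on $\Gamma_0$, $\partial_\nu w=0$ on $\Gamma$, $w|_\Gamma=\mathcal{G}(\theta_1)-\mathcal{G}(\theta_2)$, and on $\Gamma_\beta$
\begin{equation*}
\partial_\nu w+\beta_1 w=(\beta_2-\beta_1)\,u_2,\qquad\text{equivalently}\qquad(\beta_2-\beta_1)\,u_1=\partial_\nu w+\beta_2 w .
\end{equation*}
Thus $w$ has vanishing Neumann and small Dirichlet data on $\Gamma$, and on $\Gamma_\beta$ the difference $\beta_1-\beta_2$ is obtained by dividing $\partial_\nu w+\beta_i w$ by $u_i$. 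The proof then rests on three ingredients: (1) a lower bound $u_i\ge c_0>0$ on $\Gamma_{\beta,\epsilon}$, uniform over the admissible class; (2) propagation of the smallness of the Cauchy data of $w$ from $\Gamma$ to $\Gamma_{\beta,\epsilon}$; (3) inverting the displayed identity and passing from $\beta$ back to $\theta$. The dichotomy between the logarithmic and the H\"older rate lies entirely in ingredient (2).

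For (1), I would first show $u>0$ on $\overline{\Gamma}_\beta\setminus\overline{\Gamma}_0$ for each fixed admissible $\theta$: since $h$ is not constant, $u\not\equiv 0$, so by the strong maximum principle $u$ cannot have a nonpositive interior minimum; at a boundary minimum on $\Gamma$ of negative value Hopf's lemma forces $\partial_\nu u<0$, contradicting $\partial_\nu u=h\ge 0$; at such a minimum on $\Gamma_\beta$ the Robin condition gives $\partial_\nu u=-\beta u>0$, again contradicting Hopf; and on $\Gamma_0$ the minimum value is $0$. Hence $u\ge 0$, and a further application of Hopf's lemma on $\Gamma_\beta$ (using $\beta>0$) makes this strict. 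Uniformity of $c_0$ follows because $\mathcal{R}_1(M)$ is compact in $L^\infty(\Gamma_\beta)$ (Rellich and the one-dimensional Sobolev embedding) and $\mathcal{R}_2(M)$ is compact in $C(\overline{\Gamma}_\beta)$, while $\theta\mapsto u|_{\overline{\Gamma}_{\beta,\epsilon}}$ is continuous into $C(\overline{\Gamma}_{\beta,\epsilon})$ (elliptic regularity up to the flat interior part of $\Gamma_\beta$), so $\theta\mapsto\min_{\overline{\Gamma}_{\beta,\epsilon}}u$ is a positive continuous function on a compact set.

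For (2) in case (i) I would invoke the global conditional stability estimate for the Cauchy problem of logarithmic type (\cite[Theorem 1.9]{alessandrini2009}): using the a priori bound $\|w\|_{H^1(\mathcal{O})}\le C\|h\|_{H^{-1/2}(\partial\mathcal{O})}$ from Lemma \ref{lemma:classicH1}, together with $\partial_\nu w=0$ on $\Gamma$ and $w|_\Gamma$ small, one obtains that $w$ is small up to $\Gamma_{\beta,\epsilon}$ at the rate $|\log\|\mathcal{G}(\theta_1)-\mathcal{G}(\theta_2)\|_{L^2(\Gamma)}|^{-\sigma}$. In case (ii) I would first use the interior H\"older Cauchy estimate (\cite[Theorem 1.7 and Remark 1.8]{alessandrini2009}) to bound $\|w\|_{L^\infty(K)}$ on a fixed $K\subset\subset\mathcal{O}$ by a power of $\|\mathcal{G}(\theta_1)-\mathcal{G}(\theta_2)\|_{L^2(\Gamma)}$, and then propagate this smallness to a two-dimensional neighbourhood of $\Gamma_{\beta,\epsilon}$ by quantitative analytic continuation (Hadamard three-circle / propagation of smallness for $\log|w|$); here $\theta_i\in\mathcal{R}_2(M)$ is used to guarantee that each $u_i$ extends analytically across $\Gamma_{\beta,\epsilon}$ with radius of analyticity and sup-norm controlled by $M,M_h,\mathcal{O},\epsilon$ only, so that all continuation constants are uniform. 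This gives $\|w\|_{C^1(\overline{\Gamma}_{\beta,\epsilon})}$ bounded by a power of the data.

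For (3), in case (ii) the $C^1$ bound makes both $w$ and $\partial_\nu w$ small in $C(\overline{\Gamma}_{\beta,\epsilon})$, so dividing by $u_1$ (using $c_0$) and recalling that $\theta\mapsto\beta=m_\beta+e^\theta$ is bi-Lipschitz on $\{\|\theta\|_{L^\infty}\le M\}$ yields the H\"older bound for $\|\theta_1-\theta_2\|_{L^2(\Gamma_{\beta,\epsilon})}$. In case (i) one cannot expect a pointwise small bound on $\partial_\nu w$ (since $\beta_i$ is only $H^1$), so I would instead recover $\beta_1-\beta_2$ in a negative-order Sobolev norm on $\Gamma_{\beta,\epsilon}$ by a duality argument: testing $(\beta_2-\beta_1)u_1=\partial_\nu w+\beta_2 w$ against $\varphi\in C_c^\infty(\Gamma_{\beta,\epsilon})$ and rewriting the normal-derivative term via Green's identity reduces it to $\int_{\mathcal{O}}\nabla w\cdot\nabla\widetilde\varphi$ for a suitable lift $\widetilde\varphi\in H^1(\mathcal{O})$ vanishing on $\Gamma\cup\Gamma_0$, which is logarithmically small by step (2); dividing by $u_1$ and interpolating against the a priori bound $\|\theta_1-\theta_2\|_{H^1(\Gamma_\beta)}\le 2M$ (one-dimensional Gagliardo--Nirenberg) then gives the $L^\infty(\Gamma_{\beta,\epsilon})$ estimate at a logarithmic rate, which bi-Lipschitzness transfers to $\theta$. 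The step I expect to be the main obstacle is ingredient (2) in case (ii): carrying out the analytic continuation with all constants --- especially the lower bound on the radius of analyticity of $u_i$ up to $\Gamma_{\beta,\epsilon}$ --- uniform over $\mathcal{R}_2(M)$, which requires the quantitative analytic regularity theory for the Robin problem near an analytic boundary arc and a careful treatment near the endpoints of $\Gamma_\beta$ (the reason for the $\epsilon$-cut-off); by contrast, (1) and the bookkeeping of the dependence of $K_1,\sigma_1,K_2,\sigma_2$ on the data are routine, and most of the Cauchy-stability machinery in (2) can be imported from \cite{alessandrini2003,alessandrini2009,hu2015}.
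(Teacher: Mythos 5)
Your overall architecture is sound and, for part (ii), essentially identical to the paper's: the paper also reduces to (1) a uniform lower bound $u\ge\eta>0$ on $\Gamma_{\beta,\epsilon}$ obtained from the maximum principle, Hopf's lemma and a compactness argument (following \cite[Lemma 3.2]{hu2015} and \cite[Lemma 2]{chaabane1999}), and (2) a uniform analytic extension of $u$ across $\Gamma_\beta$ with controlled radius and $C^2$ bounds, after which the H\"older propagation of smallness is imported verbatim from \cite[Theorem 3.1]{hu2015}. The paper obtains the uniform analyticity exactly as you anticipate --- via quantitative analytic regularity up to the boundary for the Robin problem (\cite[Chapter 8, Corollary 1.1]{lions1973}), with $\theta_i\in\mathcal{R}_2(M)$ guaranteeing uniformity of the derivative bounds, and a Taylor-series covering argument fixing the radius; this is precisely the ``main obstacle'' you flag, and the paper notes it deliberately avoids the reflection-formula route of \cite{hu2015}. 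The reduction from $\theta$ to $\beta$ via the mean value theorem is also the same.

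For part (i) you diverge from the paper: the paper does not re-derive the logarithmic estimate but cites \cite[Theorem 2.2]{alessandrini2003} directly, which already yields $\|\beta_1-\beta_2\|_{L^\infty(\Gamma_{\beta,\epsilon})}\lesssim|\log\|\mathcal{G}(\theta_1)-\mathcal{G}(\theta_2)\|_{L^\infty(\Gamma)}|^{-\sigma}$, and then only needs a Sobolev interpolation step (using the uniform $H^{1+s}$ bound of Lemma \ref{lemma:higher-reg}) to replace the $L^\infty(\Gamma)$ data norm by the $L^2(\Gamma)$ one. Your from-scratch route (global Cauchy stability, duality recovery of $\beta_1-\beta_2$ in a negative-order norm, Gagliardo--Nirenberg interpolation against the a priori $H^1$ bound) is in spirit a reconstruction of the proof of that cited theorem, so it buys self-containedness at the cost of length. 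It has one soft spot as sketched: in the duality step, bounding $\int_{\mathcal{O}}\nabla w\cdot\nabla\widetilde\varphi$ by something \emph{small} requires local $H^1$-smallness of $w$ near $\supp\widetilde\varphi$, not merely the global a priori bound $\|w\|_{H^1(\mathcal{O})}\le C$; you would need to localize $\widetilde\varphi$ near $\Gamma_{\beta,\epsilon}$ and upgrade the $L^2$-smallness delivered by the Cauchy estimate to $H^1$-smallness there via a Caccioppoli/boundary-regularity argument, with some care at $\Gamma_\beta$ where $w$ satisfies only a Robin condition with $H^1$ coefficient. This is repairable but is exactly the kind of technicality the paper sidesteps by citing \cite{alessandrini2003}.
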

The stability result $(ii)$ generalizes to three dimensions. In this case, one technical obstacle is to analyze the smoothness of the solutions near the corner singularities. 
\section{The Bayesian approach}\label{sec:bayesian-approach}
Central in the Bayesian framework is the posterior distribution, which is the normalized product of the prior distribution and the likelihood-function modelling the measurement process. In this paper we take the natural viewpoint of \cite{nickl2023} that the measurements are discrete, taken at uniformly random locations on the observable part of the boundary, and are contaminated with Gaussian noise. In the context of Stokes' model, we let $V=\mathbb{R}^2$ and $d=2$, whereas for the Laplace equation we set $V=\mathbb{R}$ and $d=1$. In both cases we let $|\cdot|_V$ denote the Euclidean norm. Our observations arise as the sequence of random vectors $D_N := (Y_i,X_i)_{i=1}^N$ in $(V\times \Gamma)^N$ of the form
\begin{equation}\label{eq:observations}
    Y_i=\mathcal{G}(\theta)(X_i)+\varepsilon_i, \quad \varepsilon_i \overset{\mathrm{iid}}{\sim} N(0,1), \quad i=1,\hdots,N,
\end{equation}
where $X_i\overset{\mathrm{iid}}{\sim} \lambda$, the uniform distribution on $\Gamma$ independent of the noise $\varepsilon_i$. More precisely, we endow $\Gamma$ with a Borel $\sigma$-algebra $\mathcal{B}(\Gamma)$ generated by the open sets in $\Gamma$ with respect to arc length metric. 
We have $\mu(B) = |\Gamma|^{-1} \int_B \, dS$, where $dS$ is the usual length measure and $|\Gamma|=\int_\Gamma \, dS$. 

The random vectors $(Y_i,X_i)$ are i.i.d, and we denote their law $P_{\theta}$ with corresponding probability density (Radon-Nikodym derivative) 
$$p_\theta(y,x)\equiv\frac{dP_\theta}{d\mu}(y,x)=\frac{1}{(2\pi)^{d}}\exp\left (-\frac{1}{2}|y-\mathcal{G}(\theta)(x)|_V^2 \right), \quad y\in V, x\in \Gamma,$$
with respect to $d\mu = dy \times d\lambda$, where $dy$ is the Lebesgue measure on $V$. We call $\theta \mapsto p_\theta(y,x)$ the likelihood function, and denote by $P_{\theta}^N$ the joint law of the random variables $(Y_i,X_i)_{i=1}^N$. The likelihood function is suitable to enter in the Bayesian approach: Lemma \ref{lemma:well-posedness-stokes} and \ref{lemma:forward-reg} implies that $x\mapsto\mathcal{G}(\theta)(x)$ is continuous and that $\Theta \ni \theta \mapsto \mathcal{G}(\theta) \in C(\overline{\Gamma})^d$, where $d=2$ for Stokes' model and $d=1$ for the Laplace equation. This implies $(\theta,x)\mapsto \mathcal{G}(\theta)(x)$ is jointly $\mathcal{B}(\Theta)\otimes \mathcal{B}(\Gamma) - \mathcal{B}(V)$ measurable by Lemma 4.5.1 in \cite{aliprantis2006}, 
which is enough for a well-defined posterior distribution, see \cite{nickl2023}.

Given a prior distribution $\Pi$ supported in $\Theta$, Bayes' formula, see \cite[p. 7]{ghosal2017} or \cite{nickl2023}, updates $\Pi$ by the likelihood function to obtain the posterior distribution $\Pi(\cdot|D_N)$ of $\theta$ given $D_N$,
\begin{equation}\label{eq:posterior}
    \Pi(B|D_N) = \frac{\int_B e^{\ell_N(\theta)} \, \Pi(d\theta)}{\int_\Theta e^{\ell_N(\theta)} \, \Pi(d\theta)}, \quad B\in \mathcal{B}(\Theta),
\end{equation}
where
$$\ell_N(\theta):=-\frac{1}{2}\sum_{i=1}^N |Y_i-\mathcal{G}(\theta)(X_i)|_V^2.$$
Note that $0\leq |y-\mathcal{G}(\theta)(x)|^2<\infty$ for all $(y,x)\in V\times \Gamma$ and $\theta \in \Theta$, and hence the normalization constant satisfies
$$0<\int_\Theta e^{-\frac{1}{2}\sum_{i=1}^N|y_i-\mathcal{G}(\theta)(x_i)|_V^2} \, \Pi(d\theta)\leq 1$$
for all $(y_i,x_i)_{i=1}^N \in (V\times \Gamma)^N$. It follows that $B\mapsto \Pi(B|D_N)$ is a measure for each $D_N\in (V\times \Gamma)^N$ and that $\omega\mapsto \Pi(B|D_N(\omega))$ is measurable for every $B \in \mathcal{B}(\Theta)$. In particular, $\omega \mapsto \Pi(B|Y(\omega))$ is a $[0,1]$-valued random variable.
Before we state our main theorem on the convergence features of the posterior distribution, we specify our choice of prior distributions.
\subsection{Choice of prior}
In this section we recall well-known prior distributions that are supported in $\mathcal{R}_j(M)$, $j=1,2$, allowing us to make use of the stability estimates in Lemma \ref{lemma:stability}. Our focus will be on the Matérn-type and squared exponential Gaussian priors. For simplicity we define the Gaussian priors on the $[-\pi,\pi)$-torus $\mathbb{T}$ and restrict to $\Gamma_\beta$ when necessary. Note any torus in which $\Gamma_\beta$ is embedded is relevant and can be used. In the case of the Matérn priors, as we shall see, this allows us to recover any sufficiently regular Sobolev function defined on $\Gamma_\beta$. On the other hand, the squared exponential Gaussian processes allows us to recover analytic functions defined on $\Gamma_\beta$ whose extension is $2\pi$-periodic. This setting benefits from the fact that properties of Sobolev regularity and analyticity of periodic functions are straightforwardly characterized by a decay of the Fourier coefficients. We can think of this setting as an implicit choice of approximation of the ground truth by the periodic trigonometric functions.
One could instead define a prior distribution on $\mathbb{R}$ with exponentially decaying spectral measure, and show that it is supported in $\mathcal{R}_2(M)$, see \cite{vaart2009}. This can be more technical due to the non-compactness of $\mathbb{R}$ and is unnecessary for our case.

Consider the usual $L^2(\mathbb{T})$ real orthonormal basis of trigonometric functions $\{\phi_k\}_{k\in \mathbb{Z}}$ and for $j=1,2$ the random series
\begin{equation}\label{eq:random-series}
    \tilde{\theta}_j = \sum_{k \in \mathbb{Z}} g_k w_{k,j}\phi_k, \qquad g_k \stackrel{i.i.d}{\sim} N(0,1)
\end{equation}
with
\begin{align}
    w_{k,1} &= (1+k^2)^{-\alpha/2}, \quad \alpha > 1/2, \label{eq:matern}\\
    w_{k,2} &= e^{-\frac{r}{2} k^2}, \quad r >0, \label{eq:squared_exp}
\end{align}
where $\alpha>0$ and $r>0$ are parameters to be chosen. We consider for example $\phi_k(x)=1/\sqrt{\pi}\cos(kx)$ for $k>0$, $\phi_k(x)=1/\sqrt{\pi}\sin(kx)$ for $k<0$ and $\phi_0=1/{\sqrt{2\pi}}$.
Since $w_{k,j}\in \ell^2(\mathbb{Z})$ for $j=1,2$, the series \eqref{eq:random-series} converges for each $x\in \mathbb{T}$ in the mean-square sense. In fact it is a Gaussian random variable, see \cite[p. 13]{ghosal2017}, and the limit $\tilde{\theta}_j(x)$ exists almost surely. The choice of $w_{k,j}$ is here motivated by the span of $\{w_{k,j}\phi_k\}_{k\in \mathbb{Z}}$. Indeed, $\{w_{k,1}\phi_k\}_{k\in \mathbb{Z}}$ is an orthonormal basis of 
\begin{equation}
    H^{\alpha}(\mathbb{T}):=\{f\in L^2(\mathbb{T}): \|f\|_{H^\alpha,\mathbb{T}}^2:= \sum_{k\in \mathbb{Z}} |f_k|^2 (1+k^2)^{\alpha} <\infty\}.
\end{equation}
 Here $f_k:=\langle f, \phi_k\rangle_{L^2(\mathbb{T})}$ denotes the coefficients in the orthonormal basis. Note we can write $f=\sum f_k \phi_k$ in a standard complex Fourier expansion $\sum \hat{f}_k e^{ikx}$ with the usual Fourier coefficients $\hat{f}_k$ expressed in terms of $f_k$. Conversely, any real function in the standard complex Fourier expansion can be written as $\sum f_k \phi_k$. Then $H^\alpha(\mathbb{T})$ is the usual periodic Sobolev space of regularity $\alpha$, see \cite{taylor2011}.
 Similarly, $\{w_{k,2}\phi_k\}_{k\in \mathbb{Z}}$ is an orthonormal basis of 
\begin{equation}
    \mathcal{A}_r(\mathbb{T}):=\{f\in L^2(\mathbb{T}): \|f\|_{r,\mathbb{T}}^2:= \sum_{k\in \mathbb{Z}} |f_k|^2 e^{rk^2} <\infty\}.
\end{equation}
A closed ball in any space of functions with exponentially decaying Fourier coefficients is in $\mathcal{R}_2(M)$ for some $M>0$, see Lemma \ref{lemma:fourier-analytic}, and so the choice of the `square' here is only in honor of the squared exponential prior. Note both spaces are Hilbert spaces as closed subspaces of $L^2(\mathbb{T})$ with their respective obvious inner products. Note also that $\mathcal{A}_r(\mathbb{T})$ embeds continuously into $H^\alpha(\mathbb{T})$ for any $r,\alpha > 0$, which in return embeds continuously into $C(\mathbb{T})$ for $\alpha > 1/2$ by a Sobolev embedding, see \cite{taylor2011}. 

\subsubsection{RKHS and support}\label{sec:RKHS}
The random series \eqref{eq:random-series} converges almost surely in $H^\beta(\mathbb{T})$ with $\beta<\alpha-\frac{1}{2}$ and $\mathcal{A}_q$ with $q<r$  for $j=1$ and $j=2$, respectively. Indeed, by Fubini's theorem
$$\mathbb{E}[\|\tilde{\theta}_1\|_{H^\beta,\mathbb{T}}^2]=\mathbb{E}[\sum_{k\in\mathbb{Z}} g_k^2 w_{k,1}^2(1+k^2)^{\beta}]=\sum_{k\in\mathbb{Z}}(1+k^2)^{\beta-\alpha}<\infty,$$
and similarly for $\tilde{\theta}_2$. Then also $\tilde{\theta}_2\in H^\beta(\Gamma_\beta)$ almost surely. Likewise we define 
$$\mathcal{A}_r(\Gamma_\beta):=\{f=g|_{\Gamma_\beta}: g\in \mathcal{A}_r(\mathbb{T})\},$$
endowed with the quotient norm
\begin{equation}\label{eq:quotient-norm}
    \|f\|_r=\inf_{g\in \mathcal{A}_r(\mathbb{T}): g=f \text{ in } \Gamma_\beta} \|g\|_{r,\mathbb{T}}=\|f\|_{r,\mathbb{T}},
\end{equation}
where the last equality holds because $f$ has a unique analytic continuation to $\mathbb{T}$. Then $\tilde{\theta}_2\in\mathcal{A}_q(\Gamma_\beta)$, $q<r$ almost surely.\\

The series \eqref{eq:random-series} is the Karhunen-Loeve expansion of a Gaussian random element of $H^\beta(\mathbb{T})$ and $\mathcal{A}_q(\mathbb{T})$ for $j=1$ and $j=2$, respectively, see \cite{dashti2017}. We set $\alpha>3/2$ and $r>0$ such that the laws of $\tilde{\theta}_1$ and $\tilde{\theta}_2$ define Gaussian probability measures in $\Theta$. By a Sobolev embedding $\tilde{\theta}_1$ and $\tilde{\theta}_2$ are almost surely in $C(\overline{\Gamma}_\beta)$, the separable Banach space
of continuous functions on $\overline{\Gamma}_\beta$ endowed with the usual supremum norm, which we denote by $\|\cdot\|_\infty$. Then the laws of $\tilde{\theta}_j$, $j=1,2$, define Gaussian probability measures on $C(\overline{\Gamma}_\beta)$, see \cite[Exercise 3.39]{hairer2009}. We denote
$$\tilde{\Pi}_j := \mathcal{L}(\tilde{\theta}_j), \quad j=1,2.$$
The reproducing kernel Hilbert space (RKHS) of the Gaussian random element $\tilde{\theta}_j$ is $H^\alpha(\mathbb{T})$ for $j=1$ and $\mathcal{A}_r(\mathbb{T})$ for $j=2$, see Theorem I.23 \cite{ghosal2017}. Since the restriction $H^\alpha(\mathbb{T})\rightarrow H^\alpha(\Gamma_\beta)$ is onto, see \cite[Section 4.4]{taylor2011}, the RKHS of the restricted Gaussian random element is $\mathcal{H}_1:=H^\alpha(\Gamma_\beta)$ in the case $j=1$ and $\mathcal{H}_2:=\mathcal{A}_r(\Gamma_\beta)$, see \cite[Exercise 2.6.5]{gine2016}.

\subsubsection{Covariance function} 
Since $\tilde{\theta}_j(x)$ is a Gaussian random variable for each $x\in \mathbb{T}$ and $j=1,2$, it is in fact a Gaussian process. The covariance function $K_j:\mathbb{T}\times \mathbb{T}\rightarrow \mathbb{R}$ of the process takes the form, for $j=1,2$,
\begin{equation}
    K_j(x,x') = \mathbb{E}[\tilde{\theta}_j(x)\tilde{\theta}_j(x')] = \sum_{k\in\mathbb{Z}} w_{k,j}^2 \phi_k(x)\phi_k(x'),
\end{equation}
see for example \cite[p. 586]{ghosal2017}. Choosing for example $\phi_k(x)=1/\sqrt{\pi}\cos(kx)$ for $k>0$, $\phi_k(x)=1/\sqrt{\pi}\sin(kx)$ for $k<0$ and $\phi_0=1/{\sqrt{2\pi}}$, and using the identity $\cos(a)\cos(b)+\sin(a)\sin(b)=\cos(a-b)$ we find
\begin{align}
    K_j(x,x') &= \frac{w_{0,j}^2}{2\pi} + \frac{1}{\sqrt{\pi}}\sum_{k=1}^\infty w_{k,j}^2  \phi_k(x-x'),\\
    &= \frac{1}{2\pi}\sum_{k\in\mathbb{Z}} w_{k,j}^2e^{ik(x-x')},\\
    &=\sum_{k\in\mathbb{Z}} m_{j}(x-x'+2\pi k),
\end{align}
using the Poisson summation formula with 
\begin{align}
    m_1(s)&= \mathcal{F}^{-1}[(1+4\pi^2 \xi^2)^{-\alpha}](s)=C s^{\alpha-1/2}\mathcal{K}_{\alpha-\frac{1}{2}}(s),\\
    m_2(s)&= \mathcal{F}^{-1}[e^{-4\pi^2 r\xi^2}](s) = C e^{-\frac{s^2}{4r^2}},
\end{align}
where $\mathcal{K}_\nu$, $\nu>0$ is a modified Bessel function, see \cite[Section 4.2.1]{rasmussen2006}. Thus $K_j$ is the $2\pi$-periodization of the usual Matérn covariance function on $\mathbb{R}$ when $j=1$ and the squared exponential covariance functions on $\mathbb{R}$ when $j=2$, which justifies our naming convention.
\subsubsection{Rescaling}
Take $\alpha>1$ and $r>0$ such that $\tilde{\Pi}_j(\Theta)=1$ for $j=1,2$. We then let $\Pi_j$ be the `rescaled' Gaussian distribution for $j=1,2$, 
\begin{equation}\label{eq:prior-def}
    \Pi_j := \mathcal{L}\left(\kappa_{N,j} \tilde{\theta}_j\right), \qquad \tilde{\theta}_j\sim \tilde{\Pi}_j,
\end{equation}
for some decreasing sequence in $N$, $\kappa_{N,j}$ defined as
\begin{align}
    \kappa_{N,1} &:= N^{-1/(4\alpha+2)},\\
    \kappa_{N,2} &:= \log(N)^{-1}.
\end{align}
Letting the covariance of the prior depend on the observation regime is natural: it updates the weight of the prior term in the posterior \eqref{eq:posterior} formally as
\[d\Pi_1(\theta)\propto \exp\left(-\frac{N^{1/(2\alpha+1)}}{2}\|\theta\|_{\mathcal{H}_1}^2 \right)\]
in the case of $j=1$. In this way we penalize large values of $\|\theta\|_{\mathcal{H}_1}$ more. This is common in the consistency literature, see \cite{monard2021}, and in fact sufficient for convergence of regularized least-square procedures, see \cite[Section 5]{engl1996}. In our setting this rescaling is needed so that the prior distributions concentrate sufficiently on the totally bounded regularization sets $\mathcal{R}_1(M)$ and $\mathcal{R}_2(M)$.

\subsection{Convergence of the posterior mean}
Before we state the main result, the convergence of the posterior mean to the ground truth as $N\rightarrow \infty$, we recall some preparatory definitions. In the following we let $\Pi_j(\cdot|D_N)$ denote the posterior distribution \eqref{eq:posterior} in $\Theta$ arising from the prior distribution $\Pi_j$ defined in \eqref{eq:prior-def} for $j=1,2$. The posterior mean $\mathbb{E}_j[\theta|D_N]$ is defined in the sense of a Bochner integral, see for example \cite[p. 44]{diestel1977}. Indeed, for all $D_N\in (V\times \Gamma)^N$ 
\begin{equation}\label{eq:bocnermean}
    \int_\Theta \|\theta\|_{\Theta}\, d\Pi_j(\theta|D_N) \propto \int_\Theta \|\theta\|_{\Theta} e^{\ell_N(\theta)} \, d\Pi_j(\theta) \leq \int_\Theta \|\theta\|_{\Theta} \, d\Pi_j(\theta)  < \infty,
\end{equation}
by Fernique's theorem \cite[Theorem 3.11]{hairer2009}, since $\Pi_j$ is supported in $\Theta$ for $j=1,2$. Then $D_N\mapsto \mathbb{E}_j[\theta | D_N]$ is a $\Theta$-valued random element by the definition of the Bochner integral and since the pointwise limit of a sequence of measurable functions is measurable, see \cite[Theorem 4.2.2]{dudley1989}. Let $\epsilon_N>0$ be some decreasing sequence in $N$ converging to zero. We say that a sequence of real-valued random variables $\{f_N(D_N)\}_{N=1}^\infty$ converges to zero in $P_{\theta_0}^N$-probability with rate $\epsilon_N$ as $N\rightarrow \infty$ if there exists a constant $C>0$ such that
\begin{equation}
   \lim_{N\rightarrow \infty} P_{\theta_0}^N(D_N: |f_N(D_N)| > C\epsilon_N ) = 0
\end{equation}
Then we have the following convergence results for the reconstruction error of the posterior mean, where we take $f_N(D_N) = \|\mathbb{E}_j[\theta|D_N]-\theta_0\|$ for $j=1,2$, and a suitable norm $\|\cdot\|$.

\begin{theorem}\label{thm:main-theorem}
Consider the posterior distribution $\Pi_j(\,\cdot\,|D_N)$ arising from observations \eqref{eq:observations} in the model \eqref{eq:forward-map} and prior distributions $\Pi_{j}$, $j=1,2$. 
\begin{enumerate}[label=(\roman*)]
    \item If $\theta_0 \in H^\alpha(\Gamma_\beta)$, $\alpha>3/2$, then
    $$\|\mathbb{E}_1[\theta | D_N] - \theta_0\|_{L^\infty(\Gamma_{\beta,\epsilon})} \rightarrow 0 \qquad \text{in $P_{\theta_0}^N$-probability}$$
    with rate $|\log(C\delta_N)|^{-\sigma}$ as $N\rightarrow \infty$ for some $0<\sigma<1$ and constant $C>0$ and where $\delta_N=N^{-\alpha/(2\alpha+1)}$.
    \item If $\theta_0 \in \mathcal{A}_r(\Gamma_\beta)$, $r>0$,
    then
    $$\|\mathbb{E}_2[\theta | D_N] - \theta_0\|_{L^2(\Gamma_{\beta,\epsilon})} \rightarrow 0 \qquad \text{in $P_{\theta_0}^N$-probability}$$
    with rate $\delta_N^\sigma$ for some $0<\sigma<1$  as $N\rightarrow \infty$, and where $\delta_N=N^{-1/2}\log(N)$.
\end{enumerate}
\end{theorem}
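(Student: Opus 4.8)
The plan is to deduce Theorem \ref{thm:main-theorem} from the general posterior-contraction machinery of \cite{monard2021, nickl2023} by verifying its hypotheses for each of the two priors, and then upgrading the contraction of the posterior to convergence of the posterior \emph{mean}. The three ingredients the general theory needs are: (a) forward regularity — uniform boundedness of $\mathcal{G}(\theta)$ and a Lipschitz/Hölder modulus of continuity of $\theta\mapsto\mathcal{G}(\theta)$ into $C(\overline\Gamma)^d$ — which is exactly Lemma \ref{lemma:forward-reg} (and Lemma \ref{lemma:well-posedness-stokes}); (b) conditional stability on the regularization set, which is Lemma \ref{lemma:stability}(i) for $j=1$ and Lemma \ref{lemma:stability}(ii) for $j=2$; and (c) prior concentration and small-ball estimates, i.e. the RKHS of $\Pi_j$ must approximate the truth $\theta_0$ at a controlled rate while the prior mass of the relevant RKHS ball controls the metric entropy. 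Step (c) is where the rescaling $\kappa_{N,j}$ enters, and it is the step I expect to do most of the work.

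First I would fix the prediction-error rate. Since the statistical experiment is regression with i.i.d. design, the canonical rate at which the posterior concentrates around $\mathcal{G}(\theta_0)$ in the $L^2(\lambda)$-type prediction loss is governed by the usual trade-off $\delta_N^2 \asymp \inf_h\{\|\mathcal{G}(\theta_0)-\mathcal{G}(h)\|_{L^2}^2 + N^{-1}\|h\|_{\mathcal{H}_j}^2\}$ together with an entropy condition. For $j=1$, with $\theta_0\in H^\alpha(\Gamma_\beta)$ and the rescaled Matérn prior whose RKHS is $H^\alpha(\Gamma_\beta)$ with scaling $N^{1/(2\alpha+1)}$ on the norm, the standard computation (e.g. along the lines of \cite{monard2021, vaart2009}) yields $\delta_N = N^{-\alpha/(2\alpha+1)}$ as stated; for $j=2$, $\theta_0\in\mathcal{A}_r(\Gamma_\beta)$ sits in the RKHS itself, so the bias term vanishes and the rate is driven purely by the variance/entropy balance with the $\log(N)^{-1}$ rescaling, giving $\delta_N = N^{-1/2}\log N$. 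The point of the rescaling is twofold: it pushes the effective prior into a \emph{totally bounded} ball $\mathcal{R}_j(M)$ so that (i) the metric-entropy hypothesis of the contraction theorem holds with the right constants, and (ii) the conditional stability estimate of Lemma \ref{lemma:stability} applies on the event that the posterior lives in $\mathcal{R}_j(M)$ — this is the bridge from prediction error to parameter error. Concretely: on a high-probability event the posterior concentrates in $\{\theta: \|\mathcal{G}(\theta)-\mathcal{G}(\theta_0)\|_{L^2(\Gamma)}\le C\delta_N\}\cap\mathcal{R}_j(M)$, and then Lemma \ref{lemma:stability} converts this into $\|\theta-\theta_0\|_{L^\infty(\Gamma_{\beta,\epsilon})}\lesssim|\log(C\delta_N)|^{-\sigma_1}$ (case $j=1$) or $\|\theta-\theta_0\|_{L^2(\Gamma_{\beta,\epsilon})}\lesssim(C\delta_N)^{\sigma_2}$ (case $j=2$).

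Next I would pass from contraction of the posterior \emph{distribution} to convergence of the posterior \emph{mean}. Here the Bochner integrability established in \eqref{eq:bocnermean} plus Jensen's inequality on the convex functional $\theta\mapsto\|\theta-\theta_0\|$ would give $\|\mathbb{E}_j[\theta|D_N]-\theta_0\|\le \mathbb{E}_j[\|\theta-\theta_0\|\,|\,D_N]$, so it suffices to control the posterior expectation of the loss. One splits this expectation over the good contraction event and its complement; on the good event it is $\le C\epsilon_N$ by the contraction bound, while on the complement one needs the prior (hence posterior) tails of $\|\theta\|_\Theta$ to be negligible — this is where Fernique's theorem \cite[Theorem 3.11]{hairer2009} for the Gaussian $\tilde\Pi_j$, together with a standard lower bound on the normalizing constant $\int_\Theta e^{\ell_N}\,d\Pi_j$ (valid on an event of $P_{\theta_0}^N$-probability tending to one), controls the contribution. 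Since the loss is a norm and $\|\cdot\|_{L^\infty(\Gamma_{\beta,\epsilon})}, \|\cdot\|_{L^2(\Gamma_{\beta,\epsilon})}$ are continuous on $\Theta$, all measurability issues are already handled by the discussion preceding the theorem.

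The main obstacle, I expect, is verifying the prior-concentration/small-ball hypothesis for the rescaled \emph{analytic} prior $\Pi_2$ — i.e. Lemma \ref{lemma:analytic-consistency} as flagged in the introduction. Rescaled squared-exponential Gaussian processes on the torus are not the standard objects treated in \cite{vaart2009} (which uses a change of time scale rather than a multiplicative rescaling), so one has to compute the relevant concentration function $\varphi_{\theta_0}(\epsilon)$ for $\Pi_2 = \mathcal{L}(\kappa_{N,2}\tilde\theta_2)$ directly: the decentering term $\inf\{\|h\|^2_{\mathcal{A}_r}: \|\kappa_{N,2}^{-1}\theta_0 - h\|_\infty \le \epsilon\}$ and the centered small-ball $-\log\tilde\Pi_2(\|\kappa_{N,2}\tilde\theta_2\|_\infty \le \epsilon)$, the latter requiring the metric entropy of $\mathcal{A}_r$-balls in $C(\overline\Gamma_\beta)$ (which is poly-logarithmic, by the exponential Fourier decay). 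Balancing these against $N\epsilon^2$ with $\kappa_{N,2}=\log(N)^{-1}$ is the delicate calculation; it is precisely this step that the paper isolates as new and of independent interest, and it is the one I would expect to consume the bulk of the proof.
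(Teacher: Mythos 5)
Your proposal is correct and follows essentially the same route as the paper: part (i) is obtained by verifying the hypotheses of the general contraction theorem (Theorem 2.3.2 of \cite{nickl2023}) via Lemmas \ref{lemma:forward-reg} and \ref{lemma:stability}(i), and part (ii) rests on exactly the new ingredient you flag, namely the prior-mass/small-ball/entropy verification for the rescaled squared-exponential prior (Lemma \ref{lemma:analytic-consistency}, built on the analytic-extension and covering-number Lemmas \ref{lemma:fourier-analytic} and \ref{lemma:covering-number-analytic}), followed by the stability estimate of Lemma \ref{lemma:stability}(ii) and the standard Jensen-plus-tail argument to pass from posterior contraction to convergence of the posterior mean. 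Your rate computations ($\delta_N=N^{-\alpha/(2\alpha+1)}$ from the bias--variance trade-off for $j=1$; $\delta_N=N^{-1/2}\log N$ with vanishing bias since $\theta_0\in\mathcal{H}_2$ for $j=2$) and your identification of the role of the rescaling match the paper's argument.
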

\begin{proof}
    $(i)$ This is the result of Theorem 2.3.2 \cite{nickl2023} and \cite[Exercise 2.4.4]{nickl2023} whose conditions are satisfied by Lemma \ref{lemma:forward-reg}, \ref{lemma:stability} $(i)$ and by the choice of prior \eqref{eq:prior-def} for $\alpha>3/2$.\\
    $(ii)$ This fact is proven in Appendix \ref{sec:cons-analytic}, since we deviate slightly from the setting of Theorem 1.3.2 in \cite{nickl2023}. 
\end{proof}
\begin{remark}
    Note the continuity of $\theta\mapsto m_\beta + e^\theta$ leaves us with convergence in probability on the level of $\beta$. However, the lack of uniform continuity  leaves us without a rate. A result with the above convergence rates on the level of $\beta$ is most easily achieved by replacing $\beta(\theta)=m_\beta+e^\theta$ with a smoothened `regular link function', in the sense of \cite{nickl2020b}. Our concern is only the well-definedness of the mean. Indeed, Bochner integrability as in \eqref{eq:bocnermean} seems not straightforward with $\theta$ replaced by $e^\theta$.
\end{remark}
This theorem justifies and quantifies the use of the Bayesian methodology for the two inverse problems. Note the theorem does not generalize immediately to the problem for Stokes' model with for example an $L^2$-norm on a set $K\subset\subset \Gamma_\beta$ in which $u\neq 0$ as in \cite[Remark 3.7]{boulakia2013b}. Indeed, the estimate includes the pressure $p$ and its normal derivative $\partial_\nu p|_\Gamma$ at $\Gamma$. Improving this estimate to be independent of observations of the pressure remains largely open to the authors knowledge.

\section{Experimental results}
\label{sec:experiments}
In this section, we illustrate the Bayesian methodology for both the Laplace problem \eqref{eq:laplace} for which the theoretical results where proven, and for the Stokes problem \eqref{eq:stokes} which motivated this study. 

\subsection{General methodology}
We consider a simple ground truth of the form 
$$\theta_0 = \sum_{k=-2}^2 \theta_{0,k} \phi_k$$ 
for $(\theta_{0,-2},\theta_{0,-1},\theta_{0,0},\theta_{0,1},\theta_{0,2})=(-0.6,  0.7,  2,  0.1, -0.08)$, and with $\phi_k(x)=\sin(2\pi kx)$ for $k>0$, $\phi_k(x)=\cos(2\pi kx)$ for $k<0$, and $\phi_0=1$. For simplicity, we truncate the prior series \eqref{eq:random-series} at $|k|=2$ for both $j=1$ and $j=2$. Furthermore, we choose $m_\beta=0$. As the computational domain, we consider the rectangle $\mathcal{O}=(0,1)\times (0,0.2)$ with $\Gamma = \Gamma_s = (0,1)\times \{1\}$, $\Gamma_0 = \{0\} \times (0,0.2) \cup \{1\} \times (0,0.2)$, and $\Gamma_\beta = (0,1)\times\{0\}$. For the Stokes' model \eqref{eq:stokes}, we add a homogeneous Neumann condition on $\Gamma_0$. The domain is represented by a triangular mesh consisting of $400\times 50$ elements. Forward computations are implemented in Python using FEniCSx \cite{AlnaesEtal2014}. The code is available at \cite{seizilles2023}.

\subsubsection{Synthetic data}
The data is generated following \eqref{eq:observations}, where we introduce  $\sigma_{\mathrm{noise}}$ to model the noise standard deviation. In the case of the Laplace model, the noisy observations can be seen in Figure \ref{fig:surfaceobservations} for the choice $h(x) = 10(\sin(12\pi x)+1)$ and $\sigma_{\mathrm{noise}} = 0.1$. Then the  likelihood function takes the form $\tilde \ell_N(\theta):=-\frac{1}{2\sigma^2_{\mathrm{noise}}}\sum_{i=1}^N |Y_i-\mathcal{G}(\theta)(X_i)|_V^2$.

\begin{figure}[t]
\centering
\begin{subfigure}{0.5\textwidth}
  \centering
  \includegraphics[width=1.\linewidth]{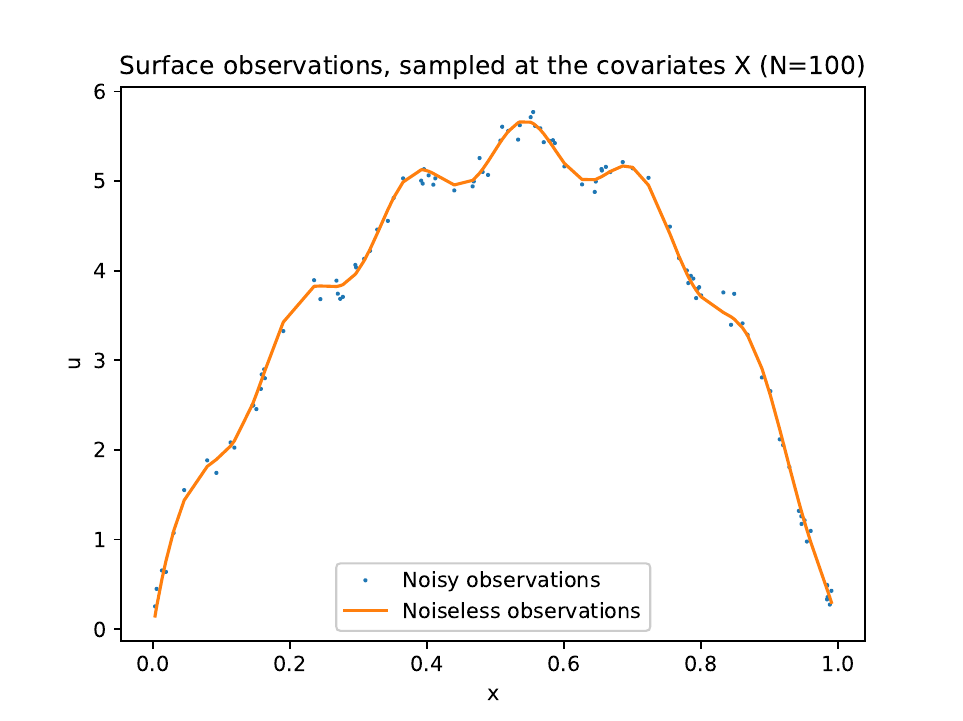}
\end{subfigure}%
\begin{subfigure}{0.5\textwidth}
  \centering
  \includegraphics[width=1.\linewidth]{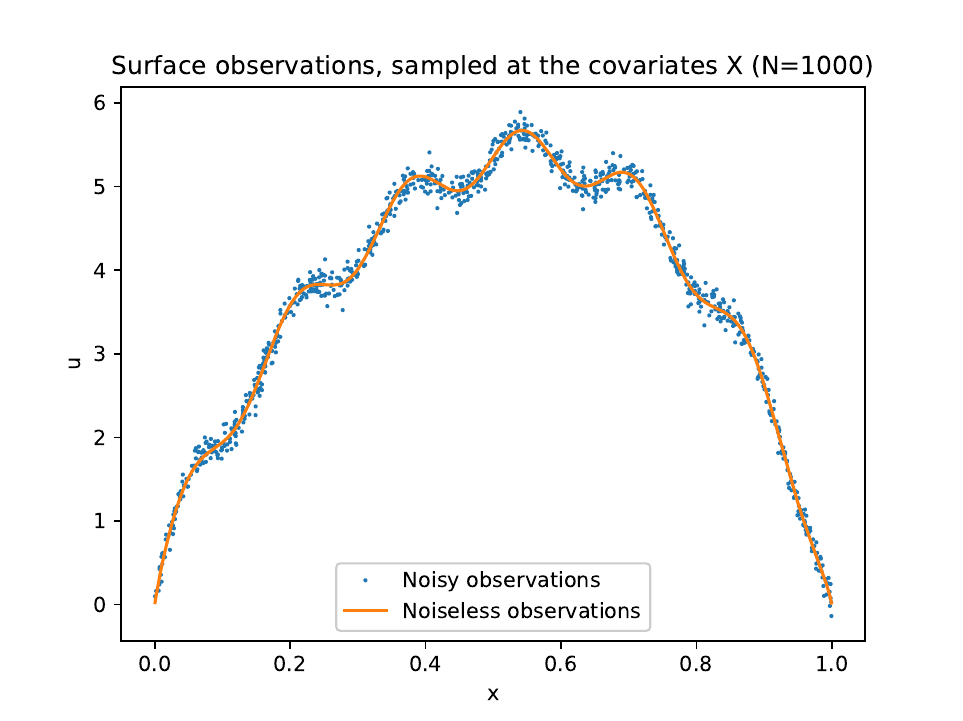}
\end{subfigure}

\caption{Surface observations for the Laplace problem. The orange line corresponds to the noiseless solution of the PDE parameterised with the true coefficients. After selecting covariates uniformly at random and adding independent Gaussian noise, one obtains the blue dots: noisy observations mimicking measurements and corresponding to the $(Y_i)_{i=1}^N$ in \ref{eq:observations}.}
\label{fig:surfaceobservations}
\end{figure}

\subsubsection{MCMC}\label{sec:mcmc}
We sample from the posterior distribution using the preconditioned Crank-Nicolson MCMC method, see \cite{hairer_spectral_2014}. To speed up the convergence, we employ methods of adaptive Monte-Carlo, see for example \cite{haario_adaptive_2001}. Here, we adapt the step size every $1000$ iterations to maintain an acceptance ratio close to a target rate of $0.33$ which seems to give reasonable sample diagnostics, starting at a step size of $\sqrt{2\gamma}$ with $\gamma=10^{-7}$. \\

For the Stokes model, the underlying PDE is more computationally expensive to solve. For this reason, we combine the adaptive step size (started this time with $\gamma=10^{-3}$) with a multilevel Monte Carlo approach, see \cite{christen_markov_2005}. In practice, two chains are run in parallel, one on a coarse mesh consisting of $150\times 30$ elements and the other on the fine mesh consisting of $400 \times 80$ elements. For every proposal, the pCN iteration is first performed on the coarse mesh, where the likelihood is less expensive to compute. If the proposal is accepted on the coarse mesh, then the pCN iteration is performed on the fine mesh using the same proposal. Monte Carlo estimates are performed using only samples of the chain at the fine level. 
In both problems, the chain is started close to the ground truth, with a shift drawn from $\mathcal{N}(0,0.5^2)$ for each dimension.

\subsection{Laplace problem} \label{sec:Laplaceexperiments}

Figures \ref{fig:matern_laplace} and \ref{fig:squared_exp_laplace} show the marginals of the posterior distribution, as well as the posterior mean estimate of the basal drag coefficient for a noise realization of noise level $\sigma_{\mathrm{noise}}=0.1$.
For both the Matérn and the squared exponential priors the uncertainty decreases as $N$ increases. In addition, the reconstruction from the posterior mean (i.e. $\beta(\hat{\theta})$, where $\hat{\theta}$ is the empirical posterior mean) visibly converges to the ground truth. We note that the ground truth is consistently contained in a $95\%$ credible interval.  One may notice a bigger uncertainty on the end points of the interval (at $x=0$ and $x=1$). This can be explained by the constraints on the model \ref{eq:laplace}, where a Dirichlet boundary condition forces the solution of the PDE system to be $u=0$ on the sides the domain, resulting in a ``loss of information" affecting the reconstruction. For $N=100$ observations, the performance of the Matérn prior and squared exponential prior are comparable. However, for $N=1000$, we observe a narrower credible interval. This effect is particularly visible for the coefficients corresponding to a ``large" frequency, which is expected for the squared exponential prior.\\

Figures of Markov chains (\ref{fig:chains}) for these experiments can be found in Appendix \ref{sec:morefigures}. As the number of observations $N$ increases the variance displayed for each chain decreases, matching what we observe when recovering the marginals.

\subsection{Stokes problem}\label{sec:stokes}

Going back to the motivating problem for this study, we perform the same simulations as in Sec.~\ref{sec:Laplaceexperiments}, this time with the Stokes PDE model \eqref{eq:stokes} instead of \eqref{eq:laplace}. We choose $h(x) = 10((\sin(12\pi x)+1),0)$ and pick $\rho=1$ and $g=(5,5)$. Here, we found $\sigma_{\mathrm{noise}}=0.5$ to be a suitable noise level. We consider only the squared exponential prior, since this prior shows the most promising theoretical and numerical results for the Laplace problem and this choice of ground truth. \\ 

Similarly as previously, as the number of observations $N$ increases the reconstruction from the posterior mean gets closer to the ground truth and the uncertainty is reduced. The marginals of the posterior distribution for these experiments can be found in \ref{fig:stokes_posterior}. Contrary to the experiments for the Laplace problem, at $N=100$ observations the ground truth is not consistently contained within the 95\% credible intervals, which is revised after increasing the number of observations to $N=1000$.

\begin{figure}[H]    

    \centering

    \includegraphics[width=13.5cm]{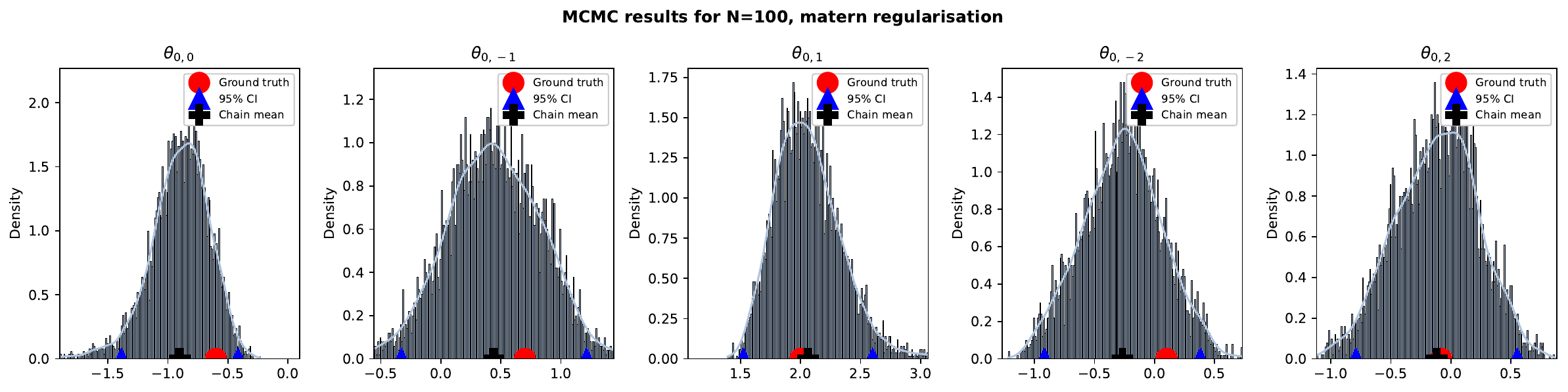}

    \includegraphics[width=13.5cm]{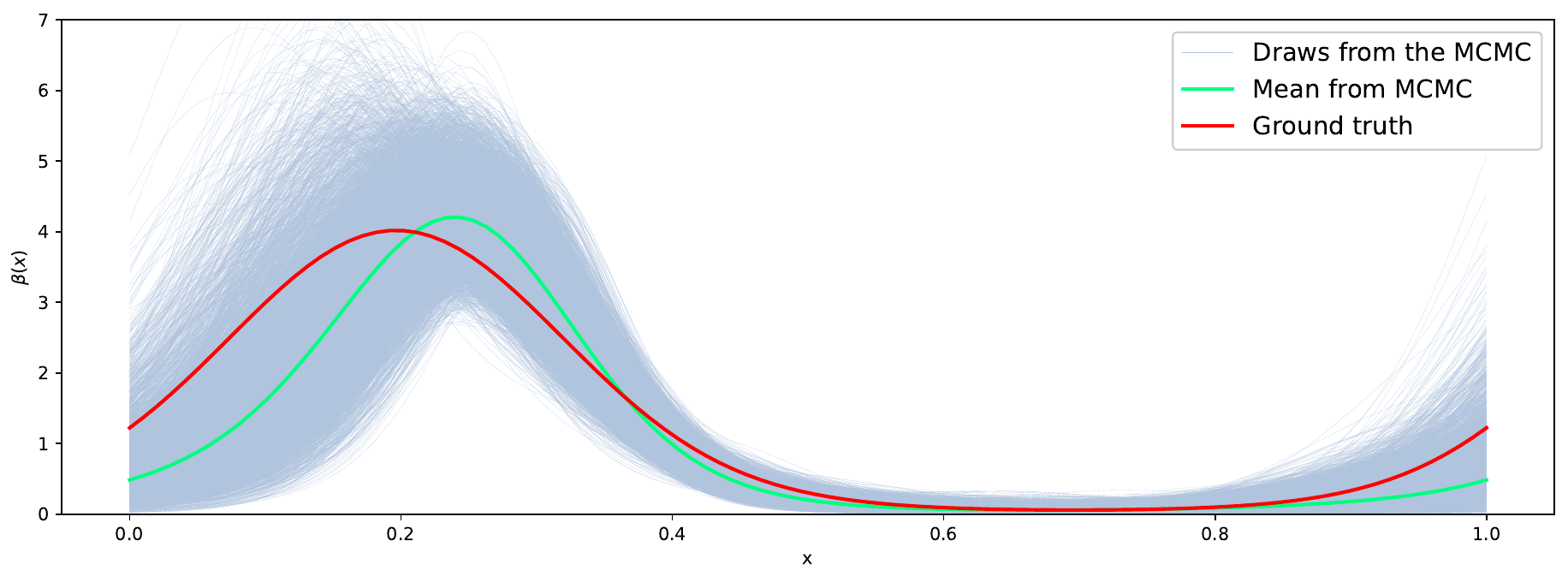}

    \includegraphics[width=13.5cm]{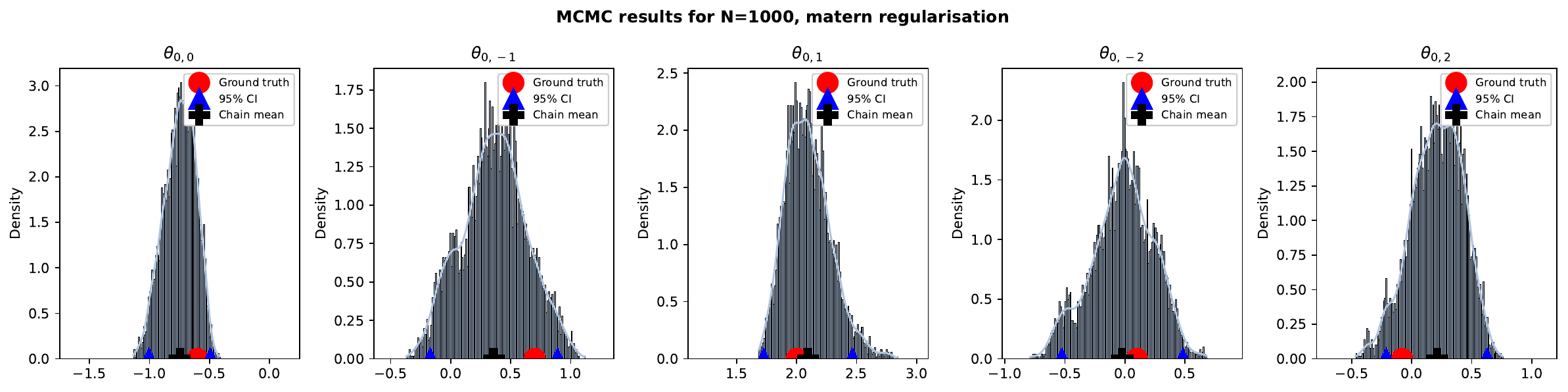}

    \includegraphics[width=13.5cm]{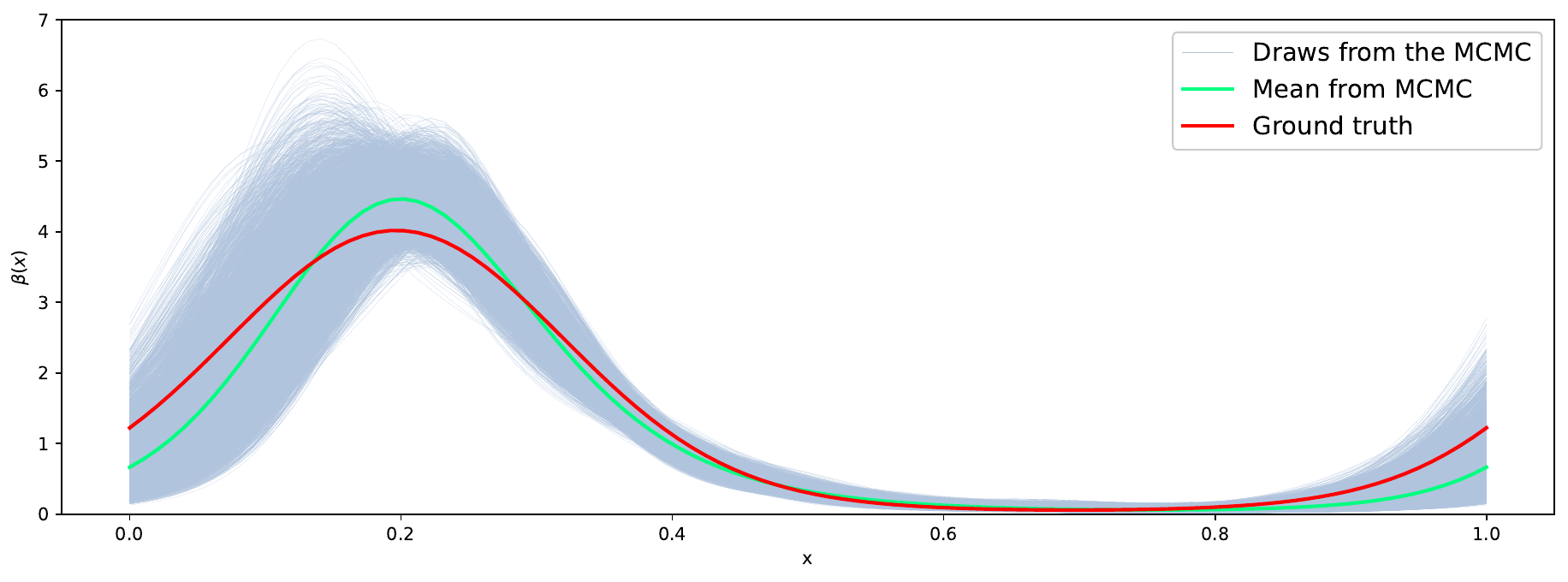}

    \caption{Histograms of $\theta$ and reconstruction of the drag coefficient $\beta$ for a Matérn-type prior ($\alpha=1$), with a number of observations of $N=100$ and $N= 1000$ respectively. For the histograms and the reconstructions, $5 000$ samples are taken equidistantly from the $500,000$ iterations of the chain after removal of the burn-in (first $100,000$ iterations).}%
    \label{fig:matern_laplace}
\end{figure}

\begin{figure}[H]
    \centering

    \includegraphics[width=13.5cm]{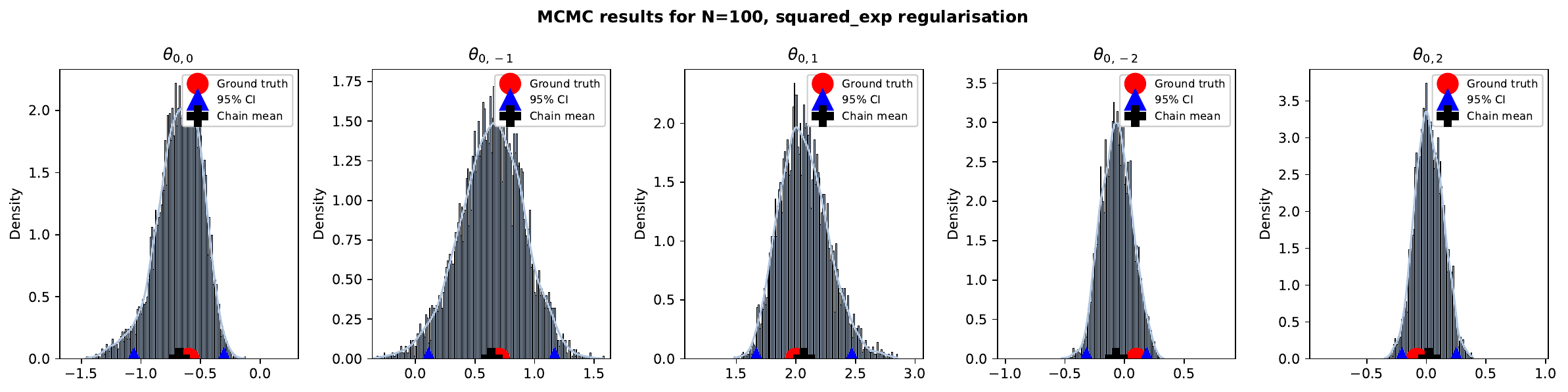}

    \includegraphics[width=13.5cm]{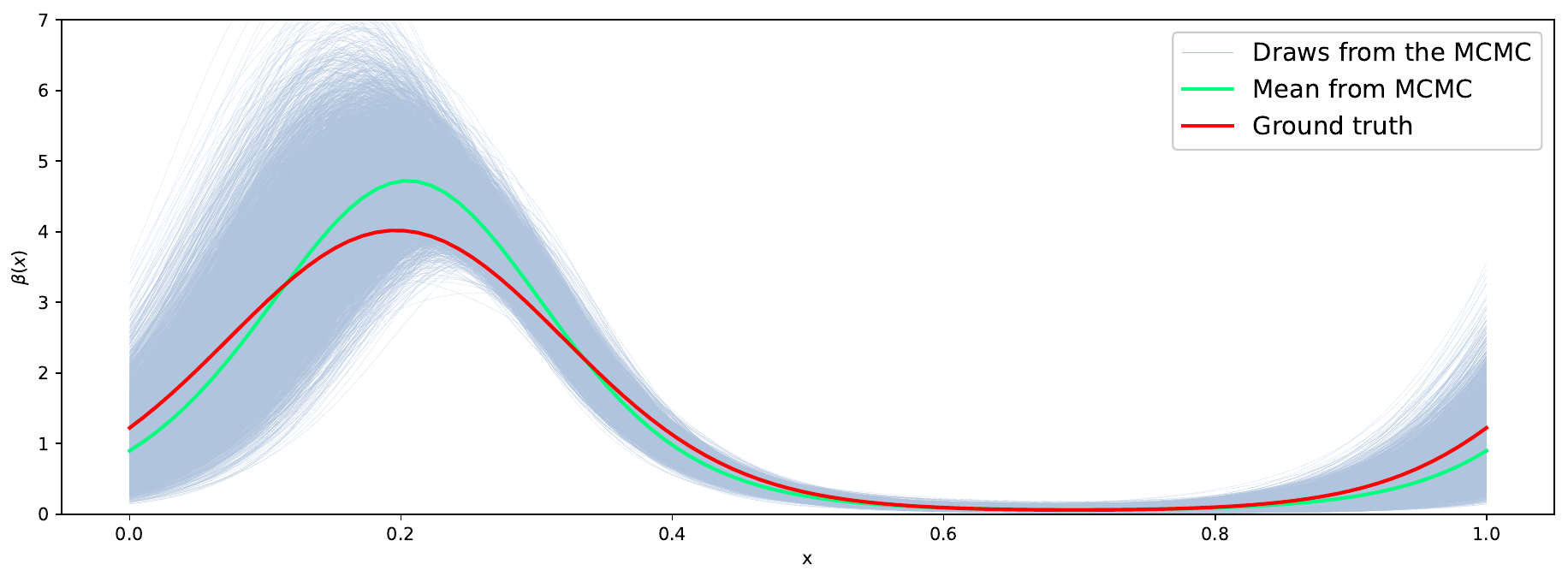}

    \includegraphics[width=13.5cm]{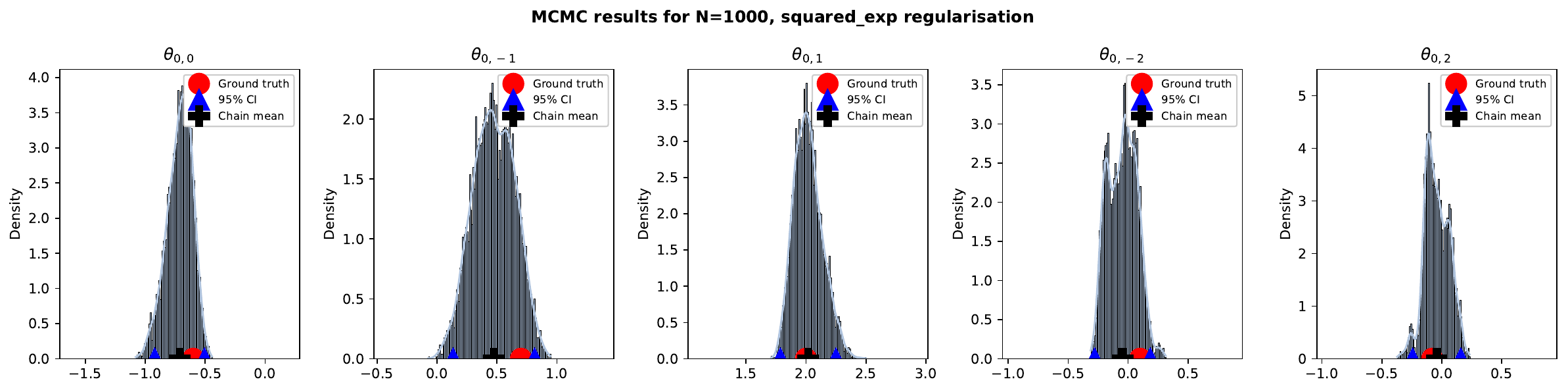}

    \includegraphics[width=13.5cm]{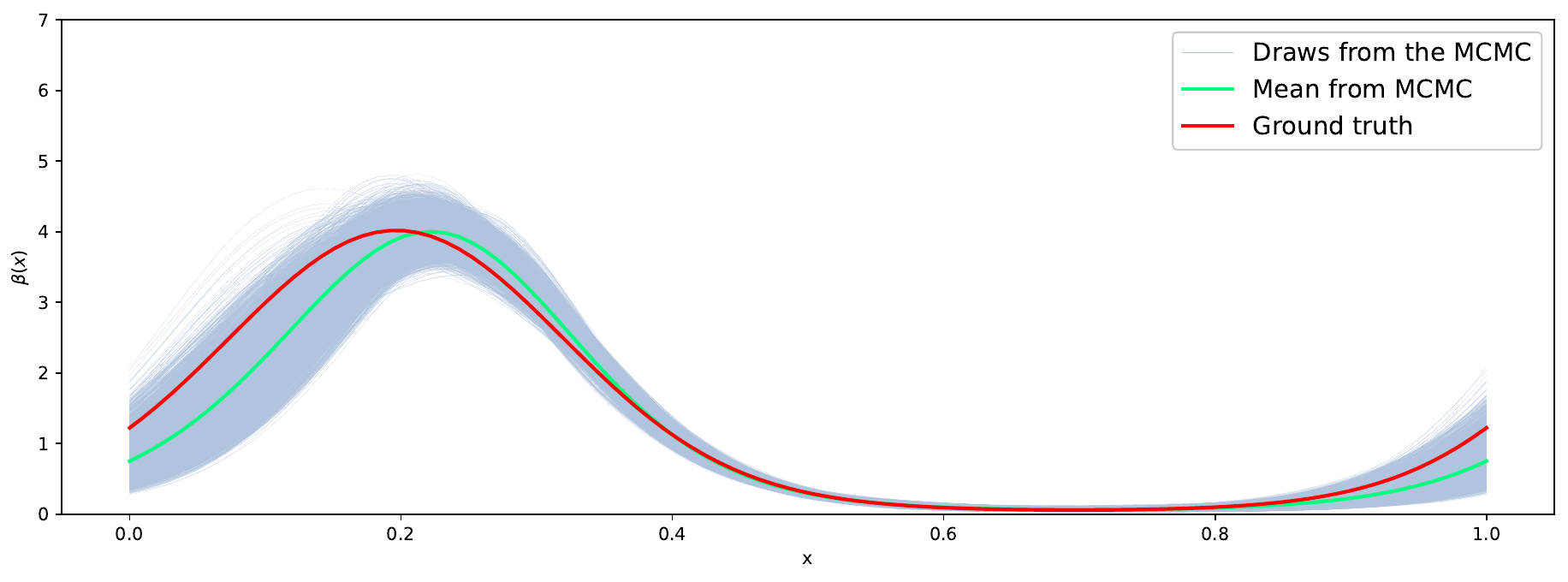}

    \caption{Histograms of $\theta$ and reconstruction of the drag coefficient $\beta$ for a squared exponential prior ($r=1$), with a number of observations of $N=100$ and $N= 1000$ respectively. For the histograms and the reconstructions, $5 000$ samples are taken equidistantly from the $500,000$ iterations of the chain after removal of the burn-in (first $100,000$ iterations).}
\label{fig:squared_exp_laplace}%
\end{figure}

\begin{figure}[H]
    \centering

    \includegraphics[width=13.5cm]{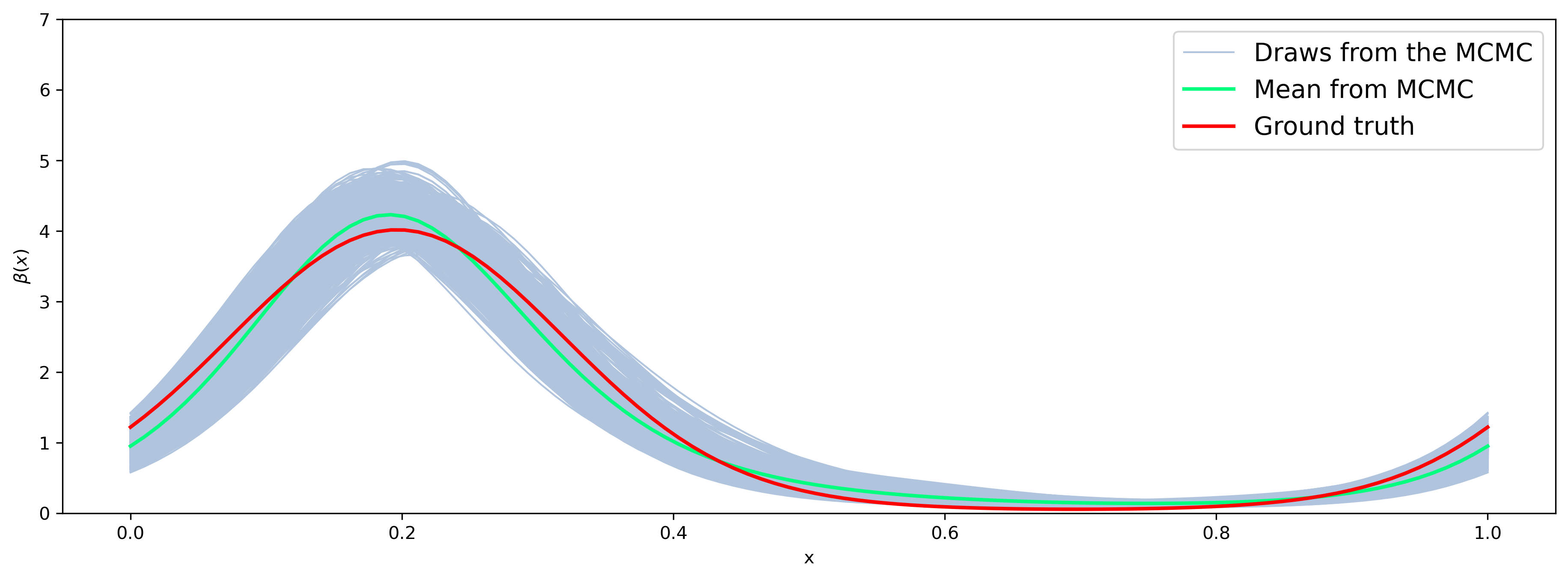}

    \includegraphics[width=13.5cm]{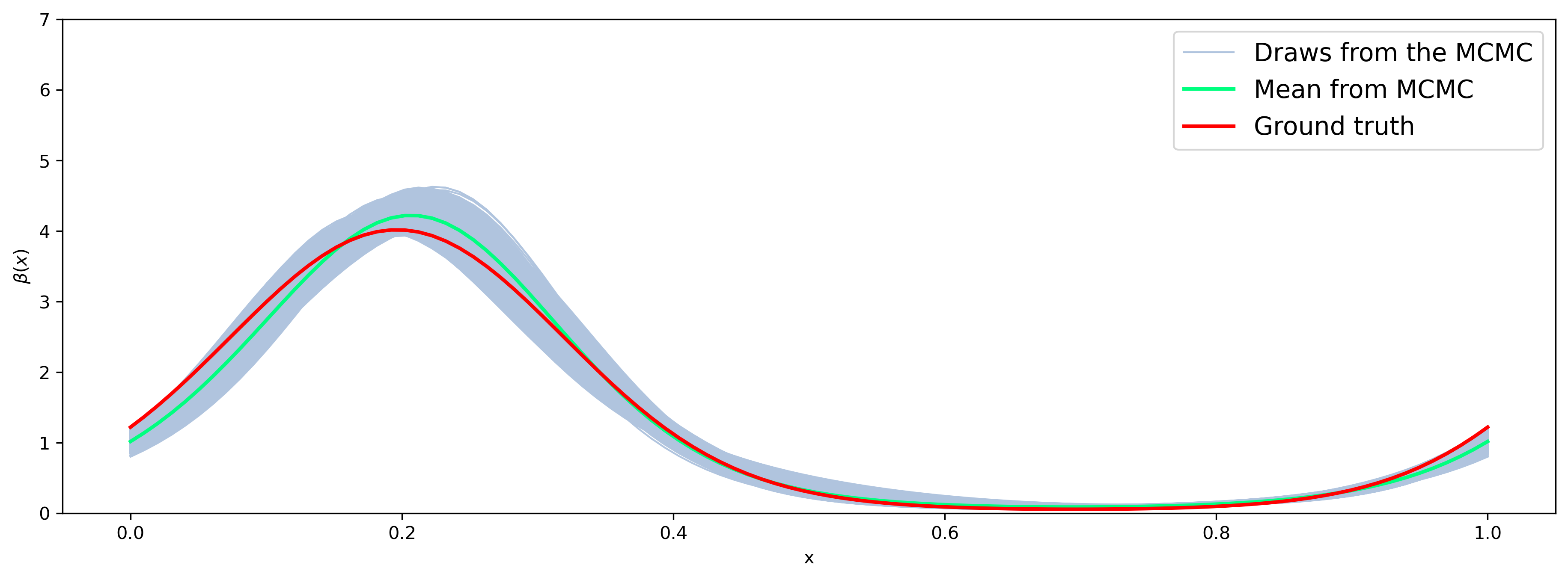}
    
    \caption{Reconstruction of the drag factor $\beta$ for a squared exponential prior ($r=1$), for the Stokes problem, with a number of observations of $N=100$ and $N= 1000$ respectively. For reconstruction, we take 1000 samples equidistantly from the fine chain after removal of the burn-in (first 1000 iterations, out of respectively 9000 and 7000 iterations for $N=100$ and $N=1000$). The coarse chains respectively consist of 44k and 61k iterations.}
    \label{fig:squared_exp_stokes}
\end{figure}

\section{Conclusions}
\label{sec:conclusions}
In this paper we considered a Bayesian approach to two inverse Robin problems with theoretical convergence guarantees as the number of observations increases. We have motivated to popular and numerically tractable Gaussian priors and show under appropriate rescaling that each lead to a convergent posterior mean. If the ground true Robin coefficient is \textit{a priori} known to be analytic, then the logarithmic convergence rate can be upgraded to a rate on the form $N^{-\tau}$ for some $\tau>0$. Interesting future work includes generalizing Theorem \ref{thm:main-theorem} to the inverse problem for Stokes' model. In its current form, Theorem \ref{thm:main-theorem} allows recovering analytic functions in the space $\mathcal{A}_r(\Gamma_\beta)$. Another interesting future direction is to generalize this to a larger class of analytic functions on $\Gamma_\beta$ using Gaussian processes and a continuous version of Lemma \ref{lemma:fourier-analytic}. For ideas in this direction we refer to \cite{vaart2009}.
Numerical experiments empirically confirmed that the reconstructions of the Robin coefficient improve as the number of observations increases. The main difficulty in the computations stems from the number of iterations required for the MCMC, since the likelihood needs to be evaluated at every step which requires solving a Laplace or Stokes PDE for every proposal of the parameter vector. In this paper, the pCN scheme was chosen for its simplicity. 
To speed up convergence further, one may consider the use of gradient-based MCMC methods.

\section*{Acknowledgments}
The authors would like to thank Prof. Richard Nickl for helpful discussions on Bayesian nonparametrics, as well as Dr Robert Arthern and Dr Rosie Williams for providing the motivation for this paper and for their assistance in understanding the physical aspects of the problem. The authors would also like to thank Prof. Kim Knudsen for helpful discussions on mixed boundary value problems. IK was funded by a Biometrika Fellowship awarded by the Biometrika Trust and the UCL IMSS Fellowship. AKR was supported by The Villum Foundation (grant no. 25893). FS was supported by Cantab Capital Institute for Mathematics of Information. MG was supported by a Royal Academy of Engineering Research Chair, and EPSRC grants EP/R018413/2, EP/P020720/2, EP/R034710/1, EP/R004889/1, EP/T000414/1.

\appendix
\section{Forward regularity}\label{sec:forward-reg}
\begin{proof}[Proof of Lemma \ref{lemma:well-posedness-stokes}]
$(i)$ Consider the general Stokes' equation for $f\in (L^2(\mathcal{O}))^2$, $h\in (H^{-1/2}(\Gamma))^2$ and $\tilde{h}\in (H^{-1/2}(\Gamma_\beta))^2$, 
\begin{equation}\label{eq:stokes-general}
    \begin{aligned}
    -\Delta u + \nabla p &=f \qquad &&\text{ in $\mathcal{O}$},\\
    \nabla \cdot u &= 0 &&\text{ in $\mathcal{O}$},\\
    \partial_{\nu} u - p\nu &= h &&\text{ on $\Gamma_s$},\\
    \partial_{\nu} u - p\nu + \beta u &= \tilde{h} &&\text{ on $\Gamma_\beta$}.
\end{aligned}
\end{equation}
The corresponding variational form is
\begin{equation}\label{eq:var-form-stokes}
    \int_{\mathcal{O}} \nabla u : \nabla v + \int_{\Gamma_\beta} \beta u \cdot v = \int_{\mathcal{O}} f \cdot v + \langle h, v\rangle_{-\frac{1}{2},\frac{1}{2},\Gamma_s} + \langle \tilde{h}, v\rangle_{-\frac{1}{2},\frac{1}{2},\Gamma_\beta},
\end{equation}
where $\nabla u: \nabla v$ denotes the double dot product of the two matrices, $\langle \cdot, \cdot \rangle_{-\frac{1}{2},\frac{1}{2},\Gamma_s}$ denotes the $(H^{-1/2}(\Gamma_s))^2, (H^{-1/2}(\Gamma_s))^2$ dual pairing, and where $v\in V_s:=\{v\in (H^1(\mathcal{O}))^2: \nabla \cdot v = 0 \}$. By the generalized Poincaré inequality, see for example \cite[Proposition 5.3.4]{brenner2008}, 
    $$\int_{\mathcal{O}} |\nabla u_i|^2\, + \int_{\Gamma_\beta} u_i^2\,\geq C(\mathcal{O}) \|u_i\|^2_{L^2(\mathcal{O})},$$
for each $i=1,2$, where $u_i$ is the $i'th$ component of the vector field $u$. It follows that
\begin{align}
    \int_{\mathcal{O}} \nabla u : \nabla u + \int_{\Gamma_\beta} \beta u \cdot u \geq C(m_\beta,\mathcal{O}) \sum_{k=1}^2 \|u_i\|_{H^1(\mathcal{O})}^2,
\end{align}
and hence the bilinear form is coercive. It is straightforward to check that it is also bounded, and likewise that the right-hand side is a bounded linear functional on $V_s$. By standard Lax-Milgram theory, there is a unique weak solution $u\in V_s$ to \eqref{eq:stokes-general} satisfying
\begin{equation}\label{eq:the-stokes-estimate}
    \|u\|_{(H^1(\mathcal{O}))^2} \leq C(\mathcal{O},m_\beta)(\|f\|_{(L^2(\mathcal{O}))^2} + \|h\|_{(H^{-1/2}(\Gamma_s))^2} +  \|\tilde{h}\|_{(H^{-1/2}(\Gamma_\beta))^2}).
\end{equation}
Note \eqref{eq:stokes-general} is in the form that Theorem IV.7.1 in \cite{boyer2013} considers with the compatibility condition being \eqref{eq:var-form-stokes} for $v=1$. Then there is also a unique solution $p\in L^2(\mathcal{O})$ to \eqref{eq:stokes-general}. In the following we take some care in bounding this function. Initially de Rhams' theorem \cite[Theorem IV2.4]{boyer2013} gives a pressure term $\tilde{p}\in L^2_0(\mathcal{O})=L^2(\mathcal{O})/\mathbb{R}$ unique up to a constant and satisfying $-\Delta u+\nabla \tilde{p}= f$. Take then the mean-zero solution satisfying
\begin{align}
    \|\tilde{p}\|_{L^2(\mathcal{O})}&\leq C(\mathcal{O})\|\nabla \tilde{p}\|_{H^{-1}(\mathcal{O})},\\
    &= C(\mathcal{O})\|\Delta u\|_{H^{-1}(\mathcal{O})} + \|f\|_{H^{-1}(\mathcal{O})},\\
    &\leq C(\mathcal{O},m_\beta)(\|f\|_{(L^2(\mathcal{O}))^2} + \|h\|_{(H^{-1/2}(\Gamma_s))^2} +  \|\tilde{h}\|_{(H^{-1/2}(\Gamma_\beta))^2}) \label{eq:bound-on-tilde}
\end{align}
using \cite[Lemma IV.1.9]{boyer2013} and \eqref{eq:the-stokes-estimate}. The proof of Theorem IV.7.1 in \cite{boyer2013} shows that $p=\tilde{p}+C_0$ is the unique solution to \eqref{eq:stokes-general} matching the boundary conditions. If $h\in (L^2(\Gamma_s))^2$ and $\tilde{h}\in (L^2(\Gamma_\beta))^2$, this constant can be bounded as
\begin{align}
    |C_0|&\leq C(\mathcal{O})(\|h\|_{(L^2(\Gamma_s))^2} +  \|\tilde{h}\|_{(L^2(\Gamma_\beta))^2} + \|\beta\|_{L^\infty(\Gamma_\beta)}\|u\|_{L^2(\mathcal{O})}\\
    &\,\,\,\,\,+ \|\partial_\nu u\|_{H^{-1/2}(\partial\mathcal{O})} + \|\tilde{p}\|_{L^2(\mathcal{O})}),
\end{align}
hence if $\|\beta\|_{L^\infty(\Gamma_\beta)}\leq M$, then
\begin{equation}\label{eq:bound-on-p}
    \|p\|_{L^2(\mathcal{O})}\leq C(\mathcal{O},m_\beta,M)(\|f\|_{(L^2(\mathcal{O}))^2} + \|h\|_{(L^2(\Gamma_s))^2} +  \|\tilde{h}\|_{(L^2(\Gamma_\beta))^2}) 
\end{equation}
using \eqref{eq:bound-on-tilde}.\\
$(ii)$ The difference $(v,q)$ for $v=u_1-u_2$ and $q=p_1-p_2$ of solutions $(u_1,p_1),(u_2,p_2)$ of \eqref{eq:stokes} corresponding to $\beta_1=\beta(\theta_1),\beta_2=\beta(\theta_2)$ is the unique solution of
\begin{equation}\label{eq:stokes-general-diff}
    \begin{aligned}
    -\Delta v + \nabla q &= 0 \qquad &&\text{ in $\mathcal{O}$},\\
    \nabla \cdot v &= 0 &&\text{ in $\mathcal{O}$},\\
    \partial_{\nu} v - q\nu &= 0 &&\text{ on $\Gamma_s$},\\
    \partial_{\nu} v - q\nu + \beta_1 v &= u_2(\beta_2-\beta_1) &&\text{ on $\Gamma_\beta$}.
\end{aligned}
\end{equation}
Note that \cite[Lemma 2.3]{boulakia2013a} implies
$$\|u_2(\beta_2-\beta_1)\|_{(L^2(\Gamma_\beta))^2} \leq \|u_2\|_{(L^2(\Gamma_\beta))^2}\|\beta_2-\beta_1\|_{H^{1}(\Gamma_\beta)},$$
and hence by $(i)$ above,
\begin{align}
    \|v\|_{(H^1(\mathcal{O}))^2} &\leq C(\mathcal{O},m_\beta)\|u_2(\beta_2-\beta_1)\|_{(H^{-1/2}(\Gamma_\beta))^2},\\
    &\leq C(\mathcal{O},m_\beta,h,\rho,g)\|\beta_1-\beta_2\|_{H^{1}(\Gamma_\beta)}.
\end{align}
To upgrade this we prove additional smoothness of $v$ near $\Gamma$ as follows. Define an open set $V\subset \mathcal{O}$ that meets $\Gamma$, i.e. $\Gamma\subset \overline{V}$. Define then the larger set $U\subset \mathcal{O}$ with $V\subset U$ and $\overline{U}\cap \partial\mathcal{O}\subset \Gamma_s$. We then define the smooth cutoff function $\eta\in C^\infty(U)$ with $\eta \equiv 1$ in $V$ and $\supp \eta \subset \overline{U}$ (hence $\eta$ is zero near $\Gamma_0$ and $\Gamma_\beta$). Then $(\eta v, \eta q)$ solves the system
\begin{equation}\label{eq:stokes-general-diff-cutoff}
    \begin{aligned}
    -\Delta (\eta v) + \nabla (\eta q) &= \tilde{f} \qquad &&\text{ in $U$},\\
    \nabla \cdot (\eta v) &= \nabla \eta \cdot v &&\text{ in $U$},\\
    \partial_{\nu} (\eta v) - (\eta q)\nu &= \eta(\partial_\nu v - q\nu) + v\partial_\nu \eta &&\text{ on $\partial U$},
\end{aligned}
\end{equation}
for $\tilde{f}=v\Delta \eta + 2\nabla v \cdot \nabla \eta + q\nabla\eta \in L^2(\mathcal{O})$. Note $\eta(\partial_\nu v - q\nu) + v\partial_\nu \eta = v\partial_\nu \eta \in (H^{1/2}(\partial U))^2$. Then Theorem IV.7.1 of \cite{boyer2013} states that
$$\|\eta v\|_{(H^2(U))^2}\leq C(U)(\|\tilde{f}\|_{(L^2(U))^2}+\|\nabla \eta \cdot v\|_{H^1(U)}+\|v\partial_\nu \eta\|_{(H^{1/2}(\partial U))^2}).$$
Since $\eta \equiv 1$ in $V$ and using \eqref{eq:bound-on-p}, we have
$$\|v\|_{(H^2(V))^2}\leq C(\mathcal{O},m_\beta,M,h,\rho,g)$$
By Sobolev interpolation, there exists $\alpha,\tilde{\alpha}>0$ such that (denoting $v_i$ the $i'th$ component of $v$)
\begin{align}\label{eq:v-to-beta}
    \sum_{i=1}^2 \|v_i\|_{(H^{7/4}(V))^2} &\leq \sum_{i=1}^2 \|v_i\|_{(H^1(V))^2}^\alpha \|v_i\|_{(H^2(V))^2}^{\tilde{\alpha}},\\
    &\leq C(\mathcal{O},m_\beta,h,\rho,g) \sum_{i=1}^2 \|v_i\|_{(H^1(V))^2}^\alpha,\\
    &\leq C(\mathcal{O},m_\beta,h,\rho,g) \|\beta_1-\beta_2\|_{H^{1}(\Gamma_\beta)}^\alpha.
\end{align}
Now we argue that $\theta\mapsto e^\theta$ is locally Lipschitz continuous in $H^1(\Gamma_\beta)$, i.e.
\begin{equation}\label{eq:beta-to-theta}
    \|\beta_1-\beta_2\|_{H^{1}(\Gamma_\beta)} \leq C(\Gamma_\beta,M)\|\theta_1-\theta_2\|_{H^{1}(\Gamma_\beta)}.
\end{equation}
Note first by the mean value theorem that
$$\|e^{\theta_1}-e^{\theta_2}\|_{L^\infty(\Gamma_\beta)}\leq e^M \|\theta_1-\theta_2\|_{L^\infty(\Gamma_\beta)}.$$
If $\Gamma_\beta \subset \mathbb{R}$ it is clear that
\begin{align}
    \|\beta_1-\beta_2\|_{H^1(\Gamma_\beta)} &\lesssim \|e^{\theta_1}-e^{\theta_2}\|_{L^2(\Gamma_\beta)} + \|\nabla(e^{\theta_1})-\nabla(e^{\theta_2})\|_{L^2(\Gamma_\beta)},\\
    &\leq \|e^{\theta_1}-e^{\theta_2}\|_{L^2(\Gamma_\beta)} + \|e^{\theta_1}\nabla\theta_1-e^{\theta_2}\nabla\theta_2\|_{L^2(\Gamma_\beta)},\\
    &\leq \|e^{\theta_1}-e^{\theta_2}\|_{L^2(\Gamma_\beta)} + \|\nabla\theta_1\|_{L^2(\Gamma_\beta)}\|e^{\theta_1}-e^{\theta_2}\|_{L^\infty(\Gamma_\beta)}\\
    &\,\,\,\,\,\,+\|e^{\theta_2}\|_{L^\infty(\Gamma_\beta)}\|\nabla\theta_1-\nabla\theta_2\|_{L^2(\Gamma_\beta)},\\
    &\leq C(M) \|\theta_1-\theta_2\|_{H^1(\Gamma_\beta)},
\end{align}
using also the continuous Sobolev embedding $H^1(\Gamma_\beta)\subset C(\overline{\Gamma}_\beta)$. 
By the definition of Sobolev spaces on boundaries, see \cite[(1,3,3,2)]{grisvard1985}, the case where $\Gamma_\beta$ is a smooth curve follows in the same way. Indeed, this amounts to showing
$$\|\beta_1\circ \phi -\beta_2 \circ \phi\|_{H^1(I)} \leq C \|\theta_1\circ \phi-\theta_2\circ\phi\|_{H^1(I)}$$
for any smooth parametrization $\phi:I\rightarrow \mathbb{R}^2$ of a section of $\Gamma_\beta$ with $I$ an open subset of $\mathbb{R}$. In this case we just repeat the argument above.
Finally, combining \eqref{eq:v-to-beta} with \eqref{eq:beta-to-theta} and a Sobolev embedding it follows that
$$\|u_1-u_2\|_{(C(\overline{\Gamma}))^2}\lesssim \sum_{i=1}^2 \|v_i\|_{C(\overline{\Gamma})} \leq C(\mathcal{O},m_\beta,h,\rho,g,M) \|\theta_1-\theta_2\|_{H^{1}(\Gamma_\beta)}^\alpha.$$
$(iii)$ This is proved in Proposition 3.3 of \cite{boulakia2013a} for a stationary Neumann condition $g(x,t)=h(x)$ in $(H^{1/2}(\Gamma_s))^2$.

\end{proof}

\begin{proof}[Proof of Lemma \ref{lemma:classicH1}]
    Consider more generally the equation \eqref{eq:laplace} for an inhomogeneous Robin condition $\partial_\nu u+\beta u = \tilde{h}\in H^{1/2}(\Gamma_\beta)$. The corresponding variational form is 
    \begin{equation}\label{eq:varform}
        \int_{\mathcal{O}} \nabla u \cdot \nabla v \, + \int_{\Gamma_\beta} \beta u v\,  = \int_{\Gamma} hv\, + \int_{\Gamma_\beta} \tilde{h}v,
    \end{equation}
    for $v\in V:=\{u\in H^1(\mathcal{O}): u|_{\Gamma_0}=0\}$. By the generalized Poincaré inequality, see for example \cite[Proposition 5.3.4]{brenner2008}, 
    $$\int_{\mathcal{O}} |\nabla u|^2\, + \int_{\Gamma_\beta} u^2\,\geq C(\mathcal{O}) \|u\|^2_{L^2(\mathcal{O})},$$
    hence the left-hand side of \eqref{eq:varform} is a coercive bilinear form on $V$. Since $h$ and $\tilde{h}$ are $H^{1/2}$-functions, the right-hand side is a bounded linear functional on $V$. By standard Lax-Milgram theory, there is a unique weak solution $u\in V$ to \eqref{eq:varform} satisfying
    \begin{equation}\label{eq:u-estimate}
        \|u\|_{H^1(\mathcal{O})} \leq C(\mathcal{O},m_\beta) (\|h\|_{H^{-1/2}(\Gamma)}+\|\tilde{h}\|_{H^{-1/2}(\Gamma_\beta)}).
    \end{equation}
    In particular, \eqref{eq:u-estH1} is satisfied.
\end{proof}

\begin{lemma}\label{lemma:higher-reg}
    For $\beta\in H^1(\Gamma_\beta)$ with $\|\beta\|_{H^1(\Gamma_\beta)}\leq M$, $h$ as in Assumption \ref{assump:h}, and any $0<s<\frac{1}{2}$ there exists a constant $C=C(\mathcal{O},m_\beta,M,M_h,s)$ such that
    \begin{equation}
        \|u\|_{H^{1+s}(\mathcal{O})} \leq C,
    \end{equation}
    where $u$ solves \eqref{eq:laplace}.
\end{lemma}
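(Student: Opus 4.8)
The plan is to localise and combine three kinds of estimate: interior elliptic regularity, boundary regularity along the four smooth subarcs of $\partial\mathcal O$ away from their endpoints, and the theory of the Laplacian in plane sectors with mixed boundary conditions near the four junction points. The starting data are the weak solution $u$ produced by Lemma~\ref{lemma:classicH1} together with the a priori bound $\|u\|_{H^1(\mathcal O)}\le C(\mathcal O,m_\beta)\|h\|_{H^{-1/2}(\Gamma)}\le C(\mathcal O,m_\beta,M_h)$, and the constraints $\|\beta\|_{H^1(\Gamma_\beta)}\le M$, $\|h\|_{H^{1/2}(\Gamma)}\le M_h$. First I would cover $\overline{\mathcal O}$ by finitely many balls: some compactly contained in $\mathcal O$; some centred on the relative interior of each of $\Gamma_\beta,\Gamma_0^1,\Gamma,\Gamma_0^2$ and not meeting the four junction points $P_1,\dots,P_4$; and one ball about each $P_j$. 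Fixing a subordinate partition of unity $\{\chi_\ell\}$, the bound follows by estimating $\|\chi_\ell u\|_{H^{1+s}(\mathcal O)}$ on each piece and summing.

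On an interior ball $u$ is harmonic, hence smooth, and $\|\chi_\ell u\|_{H^2}\lesssim\|u\|_{L^2(\mathcal O)}$. On a ball meeting $\Gamma$ (Neumann data $h\in H^{1/2}$) or $\Gamma_0$ (homogeneous Dirichlet) but none of the corners, classical local elliptic regularity for the relevant boundary condition gives $\chi_\ell u\in H^2$ with norm controlled by $\|u\|_{H^1(\mathcal O)}+\|h\|_{H^{1/2}(\Gamma)}$. On a ball meeting the relative interior of $\Gamma_\beta$ I would rewrite the Robin condition as the inhomogeneous Neumann condition $\partial_\nu u=-\beta\,u|_{\Gamma_\beta}$; since $\Gamma_\beta$ is one-dimensional, $H^1(\Gamma_\beta)$ is a multiplier algebra on $H^{1/2}(\Gamma_\beta)$, so
\[
\|\beta\,u|_{\Gamma_\beta}\|_{H^{1/2}(\Gamma_\beta)}\ \lesssim\ \|\beta\|_{H^1(\Gamma_\beta)}\,\|u|_{\Gamma_\beta}\|_{H^{1/2}(\Gamma_\beta)}\ \lesssim\ M\,\|u\|_{H^1(\mathcal O)},
\]
and local Neumann regularity again yields $\chi_\ell u\in H^2$ with the right dependence on $\mathcal O,m_\beta,M,M_h$.

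The essential step is the four corner balls. After a smooth change of coordinates straightening the two arcs incident at $P_j$, the problem becomes the Laplacian on a plane sector of opening $\omega_j$ carrying a Dirichlet condition on one side and a Neumann (on $\Gamma$) or Robin (on $\Gamma_\beta$) condition on the other, so all four junctions are of mixed Dirichlet--Neumann type in the principal part. By the corner theory for such problems \cite{grisvard1985}, $\chi_j u$ decomposes as a regular part in $H^2$ plus finitely many singular terms $c_{j,k}\,r^{\lambda_{j,k}}\Phi_{j,k}(\varphi)$ with $0<\lambda_{j,1}<\lambda_{j,2}<\cdots$, the leading Dirichlet--Neumann exponent being $\lambda_{j,1}=\pi/(2\omega_j)$, which is $\ge 1/2$ as soon as $\omega_j\le\pi$ (for instance $\omega_j=\pi/2$ in the rectangular geometry of Section~\ref{sec:experiments}). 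The Robin term $-\beta u$ is of lower order relative to the Neumann condition and, by the multiplier estimate above, supplies data in the same $H^{1/2}$ class as $h$; moving it to the right-hand side and iterating (equivalently, absorbing it into the operator as a lower-order perturbation) shows it does not lower $\lambda_{j,1}$. Since $r^{1/2}\in H^{1+s}_{\mathrm{loc}}$ for every $s<1/2$, each $\chi_j u\in H^{1+s}$, with a bound controlled by $\|u\|_{H^1(\mathcal O)}$, $\|h\|_{H^{1/2}(\Gamma)}$, $\|\beta\|_{H^1(\Gamma_\beta)}$ and an $s$-dependent constant that blows up as $s\uparrow 1/2$.

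Summing, $\|u\|_{H^{1+s}(\mathcal O)}\le\sum_\ell\|\chi_\ell u\|_{H^{1+s}(\mathcal O)}\le C(\mathcal O,m_\beta,M,M_h,s)$, which is the claim. I expect the corner analysis to be the main obstacle: one has to check that none of the four junctions produces a singular exponent below $1/2$ — this needs the incident angles to be non-reentrant, which I would add as a standing hypothesis if it is not already intended in Assumption~\ref{assump:domain} — and, more delicately, that replacing the Neumann condition by the Robin condition $\partial_\nu u+\beta u=0$ with the merely continuous coefficient $\beta\in H^1(\Gamma_\beta)$ does not change the leading exponent, which rests on the facts that $\beta u$ is lower order and that $\|\beta\,u|_{\Gamma_\beta}\|_{H^{1/2}(\Gamma_\beta)}\lesssim M\|u\|_{H^1(\mathcal O)}$, so the Robin contribution is data of the same regularity as $h$.
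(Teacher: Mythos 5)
Your proof is correct and takes essentially the same route as the paper: the Robin term is converted into inhomogeneous $H^{1/2}$ Neumann data via the multiplier bound $\|\beta u\|_{H^{1/2}(\Gamma_\beta)}\lesssim\|\beta\|_{H^1(\Gamma_\beta)}\|u\|_{H^{1/2}(\Gamma_\beta)}$, and the restriction $s<\tfrac{1}{2}$ is traced to the leading singular exponent at the Dirichlet--Neumann junctions. The paper merely packages this differently --- it lifts all the boundary data at once to an $H^2(\mathcal{O})$ function via a surjective mixed trace operator, reduces to a homogeneous problem for $w=u-v$, and applies a global singular-function decomposition (bounding the regular part by injectivity plus the open mapping theorem) rather than your partition of unity and sector-by-sector analysis; your observation that non-reentrant junction angles are needed is left implicit in the paper's citation of that decomposition result.
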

\begin{proof}
Far away from the `corners' (where different boundary conditions meet) the estimate is straightforward using standard techniques. Near the corners the estimate is essentially due to \cite{grisvard1985}, although we are aided by \cite{banasiak1989}. Since $\beta u\in H^{1/2}(\Gamma_\beta)$ by Lemma 2.3 in \cite{boulakia2013a} ($u\in H^{1/2}(\Gamma_\beta)$ by Lemma \ref{lemma:classicH1} and $\beta \in H^1(\Gamma_\beta)$ by assumption), the trace theorem in \cite[Theorem 2.1]{banasiak1989} provides a function $v\in H^2(\mathcal{O})$ such that $\partial_\nu v =h$ on $\Gamma$, $v = 0$ on $\Gamma_0$ and $\partial_\nu v = -\beta u$ on $\Gamma_\beta$, i.e. $w=u-v$ solves
\begin{equation}\label{eq:homogeneous-pde}
    \begin{aligned}
    \Delta w &= -\Delta v \qquad &&\text{ in $\mathcal{O}$},\\
    \partial_{\nu} w &= 0 &&\text{ on $\Gamma$},\\
    w &= 0 && \text{ on $\Gamma_0$},\\
    \partial_{\nu} w &= 0 &&\text{ on $\Gamma_\beta$}.
\end{aligned}
\end{equation}
Indeed this trace operator $T:H^2(\mathcal{O})\rightarrow H^{1/2}(\Gamma)\times H^{3/2}(\Gamma_0)\times H^{1/2}(\Gamma_\beta)$, defined by $u\mapsto (\partial_\nu u|_\Gamma, u|_{\Gamma_0},\partial_\nu u|_{\Gamma_\beta})$, is bounded \cite{lions1972} and surjective \cite[Theorem 2.1]{banasiak1989}, so there exists a continuous right-inverse, see the general remark after Theorem 8.3 in \cite{lions1972}. Then
\begin{align}\label{eq:trace-est}
    \|v\|_{H^2(\mathcal{O})}&\leq C(\|h\|_{H^{1/2}(\Gamma)}+\|\beta u\|_{H^{1/2}(\Gamma_\beta)}),\\
    &\leq C(\|h\|_{H^{1/2}(\Gamma)}+\|\beta\|_{H^1(\Gamma_\beta)} \|u\|_{H^{1/2}(\Gamma_\beta)}),
\end{align}
The regularity decomposition of \cite[Theorem 3.11]{banasiak1989} decomposes the unique solution $w\in H^1(\mathcal{O})$ as
$$w = w_r + \sum_{j=1}^J c_j S_j,$$
where $w_r\in D^2:=\{w\in H^2(\mathcal{O}): \partial_{\nu}w= 0 \text{ on } \Gamma\cup \Gamma_\beta, w=0 \text{ on } \Gamma_0\}$, $c_j=c_j(f)$ are functionals of $f=-\Delta v$ in $L^2(\mathcal{O})$, see \cite[Remark 3.1.2]{banasiak1989} and $S_j$ are certain `singular' function supported near the corners. They depend only on the geometry of $\mathcal{O}$, see (3.2.26) and Proposition 3.2.3 in \cite{banasiak1989}, and satisfy $\Delta S_j\in L^2(\mathcal{O})$ and $S_j\in H^{1+s}(\mathcal{O})$ if and only if $s<1/2$. 
Since $\Delta:D^2\rightarrow L^2$ is injective by uniqueness of solutions to \eqref{eq:homogeneous-pde}, it is bijective onto its image. The open mapping theorem then states that there exists a constant $C>0$ such that $\|w_r\|_{H^2(\mathcal{O})}\leq C\|\Delta w_r\|_{L^2(\mathcal{O})}$, and hence
\begin{align}
    \|w_r\|_{H^2(\mathcal{O})}&\leq C \|\Delta w_r\|_{L^2(\mathcal{O})},\\
    &\leq C(\|\Delta v\|_{L^2(\mathcal{O})}+\sum_{j=1}^J|c_j|\|\Delta S_j\|_{L^2(\mathcal{O})}),\\
    &\leq C(\mathcal{O}) \|v\|_{H^2(\mathcal{O})}. \label{eq:estimate}
\end{align}
Combining \eqref{eq:estimate} with \eqref{eq:trace-est} and using the standard estimate of $\|u\|_{H^1(\mathcal{O})}$ we have
\begin{align}
    \|u\|_{H^{1+s}(\mathcal{O})} &\leq C(\mathcal{O})\|v\|_{H^2(\mathcal{O})} + \|\sum_{j=1}^J c_j S_j\|_{H^{1+s}(\mathcal{O})},\\
    &\leq C(\mathcal{O},s)\|v\|_{H^2(\mathcal{O})}\leq C.
\end{align}
with $C=C(\mathcal{O},M_h,M,m_\beta).$
\end{proof}

\begin{proof}[Proof of Lemma \ref{lemma:forward-reg}]
    $(i)$ is an immediate consequence of a Sobolev embedding and Lemma \ref{lemma:higher-reg}.\\
    $(ii)$ The difference $v=u_1-u_2$ of solutions $u_1, u_2$ corresponding to $\beta_1=\beta(\theta_1), \beta_2=\beta(\theta_2)$ is the unique solution to the equation 
    \begin{equation}\label{eq:diff}
    \begin{aligned}
    \Delta v &= 0 \qquad &&\text{ in $\mathcal{O}$},\\
    \partial_{\nu} v &= 0 &&\text{ on $\Gamma$},\\
    v &= 0 && \text{ on $\Gamma_0$},\\
    \partial_{\nu} v + \beta_1 v &= u_2(\beta_2-\beta_1) &&\text{ on $\Gamma_\beta$}.
\end{aligned}
\end{equation}
Since $u_2(\beta_2-\beta_1)\in H^{-1/2}(\Gamma_\beta)$, we use the estimate \eqref{eq:u-estimate} with $h=0$ and $\tilde{h}=u_2(\beta_2-\beta_1)$ to the effect that
\begin{align}\label{eq:forward-reg}
        \|v\|_{H^1(\mathcal{O})} &\leq C(\mathcal{O},m_\beta) \|u_2(\beta_2-\beta_1)\|_{H^{-1/2}(\Gamma_\beta)},\\
        &\leq C(\mathcal{O},m_\beta) \|\beta_1-\beta_2\|_{L^\infty(\Gamma_\beta)},\\
        &\leq C(\mathcal{O},m_\beta,M) \|\theta_1-\theta_2\|_{L^\infty(\Gamma_\beta)},
\end{align}
using a simple mean value theorem argument. Boundedness of the trace operator implies $(ii)$.\\
$(iii)$ By Sobolev interpolation, there exists $\alpha,\tilde{\alpha}>0$ such that
\begin{align}
    \|v\|_{H^{1+1/8}(\mathcal{O})} &\leq \|u_1-u_2\|_{H^1(\mathcal{O})}^\alpha\|u_1-u_2\|_{H^{1+1/4}(\mathcal{O})}^{\tilde{\alpha}},\\
    &\leq C(\mathcal{O},m_\beta,M,M_h) \|\theta_1-\theta_2\|_{L^\infty(\Gamma_\beta)}^\alpha,
\end{align}
where we used \eqref{eq:forward-reg} and Lemma \ref{lemma:higher-reg}. Then boundedness of the trace operator and a Sobolev embedding give the wanted result.
\end{proof}

\section{Conditional stability estimates}\label{sec:cond-stab-est}
\begin{proof}[Proof of Lemma \ref{lemma:stability}]
Notice first that the mean value theorem for $\tilde{\theta}(x)\in [\theta_1(x),\theta_2(x)],$
$$\beta_1-\beta_2=e^{\theta_1}-e^{\theta_2}=e^{\tilde{\theta}}(\theta_1-\theta_2),$$
implies
$$\|\theta_1-\theta_2\|_{L^q(\Gamma_\beta)}\leq C(M)\|\beta_1-\beta_2\|_{L^q(\Gamma_\beta)},$$
for any $1\leq q \leq \infty$, since $\|\tilde{\theta}\|_{L^\infty(\Gamma_\beta)}\leq M$ in either case of our assumptions. It is then sufficient to consider stability estimate on the level of $\beta$. \\
$(i)$ Theorem 2.2 of \cite{alessandrini2003} states that
\begin{equation}\label{eq:ale-estimate}
    \|\beta_1-\beta_2\|_{L^\infty(\Gamma_{\beta,\epsilon})}\leq \tilde{K} | \log (\|\mathcal{G}(\theta_1)-\mathcal{G}(\theta_2)\|_{L^\infty(\Gamma)}) |^{-\sigma}
\end{equation}
for some $\tilde{K}>0$ and $0<\sigma<1$ dependent on $\mathcal{O}$, $h$, $M_1$ and $\epsilon$. Sobolev embedding and interpolation results gives for some $0<\delta <\frac{1}{4}$
\begin{align}
    \|\mathcal{G}(\theta_1)-\mathcal{G}(\theta_2)\|_{L^\infty(\Gamma)} &\leq \|\mathcal{G}(\theta_1)-\mathcal{G}(\theta_2)\|_{H^{\frac{1}{2}+\delta}(\Gamma)},\\
    &\leq \|\mathcal{G}(\theta_1)-\mathcal{G}(\theta_2)\|_{L^2(\Gamma)}^{p}\|\mathcal{G}(\theta_1)-\mathcal{G}(\theta_2)\|_{H^{\frac{1}{2}+2\delta}(\Gamma)}^{1-p},\\
    &\leq M(\mathcal{O},m_\beta,M_\beta,M_h,\delta) \|\mathcal{G}(\theta_1)-\mathcal{G}(\theta_2)\|_{L^2(\Gamma)}^{p},
\end{align}
where $p=\frac{2\delta}{1+4\delta}$, and where we used Lemma \ref{lemma:higher-reg}. Inserting this into \eqref{eq:ale-estimate} for some fixed $\delta$ gives $(i)$ for $K=K(\tilde{K},M)$.\\
$(ii)$ We follow the argument of \cite[Section 3]{hu2015}, which relies on two auxillary results: 
\begin{enumerate}
    \item $\min_{x\in \Gamma_{\beta,\epsilon}}u(x)\geq \eta$, where $\eta>0$ is a constant dependent on $\epsilon$, but independent of the imposed boundary condition on $\Gamma$.
    \item the solution $u$ to \eqref{eq:laplace} can be analytically extended in a fixed neighborhood $U$ of $\Gamma_\beta$ with $\|u\|_{H^2(U)}\leq C(M)$, where it is also harmonic.
\end{enumerate}
In the presence of these two results, the estimate follows exactly as in \cite[Theorem 3.1]{hu2015} with $K>0$ and $0<\sigma<1$ depending only on $M$, $\epsilon$, $\mathcal{O}$, $M_h$, $M$, and we will not repeat it here.\\\\
(1) Note first that $u(x)\geq \eta$ for any $x\in \Gamma_{\beta,\epsilon}$, where $\eta>0$ is some constant depending on $\epsilon$, but independent of $h$. This follows from continuity of $u$ on $\overline{\mathcal{O}}$ and maximum principles for harmonic functions as in \cite[Lemma 3.2]{hu2015}. Indeed, one can conclude that $u\geq 0$ everywhere on $\overline{\mathcal{O}}$ by a standard contradiction argument as in \cite[Theorem 9, Chap. 2]{protter1984}. Then \cite[Lemma 2]{chaabane1999} concludes positivity on $\Gamma_\beta$ using Hopf's lemma. The compactness argument of \cite[Lemma 3.2]{hu2015} is then adapted to our case to show  $u(x)\geq \eta$ for any $x\in \Gamma_{\beta,\epsilon}$.\\\\
(2) Corollary 1.1 in \cite[Chapter 8]{lions1973} shows that the solution $u$ to \eqref{eq:laplace} is analytic near and up to $\Gamma_\beta$. For $\delta$ small and $\tilde{U}:=\overline{\mathcal{O}}\cap((0,1)\times (-\delta,\delta))$ it further states that for $k=(k_1,k_2)$
$$\sup_{z\in \tilde{U}}|\partial^k u(z)|\leq C(M)(k!)C(M)^{|k|},$$
for $|k|\in \mathbb{N}_0$ and where $k!=k_1!k_2!$. Then the Taylor series of $u$ in $(\alpha,0)$ for any $\alpha \in [0,1]$ has a convergence radius of at least $r=C(M)^{-1}$. Indeed, for any $(x,y)$ with distance at most $r$ to $(\alpha,0)$ we have
\begin{align}
    u(z) = u(x,y) &= \sum_{n_1=0}^\infty \sum_{n_2=0}^\infty \frac{\partial^n u(\alpha,0)}{n!}(x-\alpha)^{n_1}y^{n_2},\\
    &\leq C(M) \sum_{n_1=0}^\infty \sum_{n_2=0}^\infty C(M)^{|n|}(x-\alpha)^{n_1}y^{n_2},\\
    &\leq \sum_{n_1=0}^\infty \sum_{n_2=0}^\infty (C(M)r)^{|n|} \label{eq:bound-on-fun}
\end{align}
where we denoted $n=(n_1,n_2)$. Since a power series is analytic in the interior of its region of convergence, $u$ is analytic in sufficiently small balls centered in $(\alpha,0)$. A covering argument then gives a unique analytic extension in for example $(0,1)\times (-\tilde{\delta},\tilde{\delta}))$ with $\tilde{\delta} = \min(\delta,(2C(M))^{-1})$. Repeating \eqref{eq:bound-on-fun} for $\partial^k u(z)$ for $k=1,2$, we note that 
\begin{equation}
    \|u\|_{C^2((0,1)\times (-\tilde{\delta},\tilde{\delta}))}\leq C(M).
\end{equation}
We also conclude $\Delta u= 0$ in $\mathcal{O} \cup ((0,1)\times (-\tilde{\delta},\tilde{\delta}))$. Indeed, $\Delta u$ is analytic in $\mathcal{O} \cup ((0,1)\times (-\tilde{\delta},\tilde{\delta}))$ and coincides with $0$ on $\mathcal{O}$ and hence is zero in $\mathcal{O} \cup ((0,1)\times (-\delta,\delta))$ by uniqueness of analytic functions.

\end{proof}
Using the general property of uniform analyticity up to the boundary we avoid the argument in \cite[Theorem 3.1]{hu2015}, which uses a reflection formula provided by \cite{belinskiy2008}. Inspection of this reflection formula reveals that we do need a condition like $\theta_i\in \mathcal{R}_2(M)$, $i=1,2,$ to reflect the solution to a possible small but fixed neighborhood. We can generalize our proof to stability estimates for any analytic $\Gamma_\beta$ for $d=2$ and $d=3$.

\section{Consistency for analytic functions}\label{sec:cons-analytic}
The result of Theorem \ref{thm:main-theorem} is derived from the property of posterior consistency, see \cite[Chapter 8]{ghosal2017} for a general treatment. In the following we address posterior consistency for analytic functions. We start by establishing a relationship between the space $\mathcal{A}_r(\Gamma_\beta)$ and the set of functions $\mathcal{R}_2(M)$. We prove a result, which is well-known and particularly simple in the setting of the $m$-dimensional $[-\pi,\pi)$-torus $\mathbb{T}^m$. Analogously, it generalizes to function spaces defined by the decay of the Fourier transform by the Paley-Wiener theorem, see \cite[Theorem IX.13]{reed1975}. We consider $m\geq 1$, since it follows in much the same way as $m=1$. To this end, let $\Gamma_\beta$ be an open compactly embedded subset of $[-\pi,\pi)^m$ for $m\in \mathbb{N}$. Let $\{\phi_k\}_{k\in \mathbb{Z}^m}$ be a real orthonormal Fourier basis and define for $f_k = \langle f, \phi_k\rangle_{L^2(\mathbb{T}^m)}$ the more general space
$$\mathcal{A}_{r,m}(\Gamma_\beta):=\{f=g|_{\Gamma_\beta}:g\in \mathcal{A}_r(\mathbb{T}^m)\}$$
with
$$\mathcal{A}_r(\mathbb{T}^m)=\{f\in L^2(\mathbb{T}^m):\|f\|^2_{r,\mathbb{T}^m} := \sum_{k\in \mathbb{Z}^m} |f_k|^2 e^{r|k|}<\infty\}$$
and the corresponding quotient norm of \eqref{eq:quotient-norm}, denoted $\|\cdot\|_{r,m}$. Note we write $|k|:=|k_1|+\hdots+|k_m|$ and not for example $\|k\|^2$ to make a sharper result. We keep the definition of $\mathcal{R}_2(M)$ as in Assumption \ref{assump:beta}, and note that the condition 
$$\sup_{x\in \overline{\Gamma}_\beta}|(\partial^k\beta)(x)|\leq M (k!)M^{|k|}$$
should be understood in multi-index notation (i.e. $\partial^k = \partial_{x_1}^{k_1}\hdots \partial_{x_m}^{k_m}$ and $k! = k_1!\hdots k_m!$) and for each $|k| \in \mathbb{N}_0$. Again this is closely related to the usual characterization of analytic functions on $\Gamma_\beta$, see \cite[Proposition 2.2.10]{krantz2002}. We also denote $d_\infty(x,S):=\inf_{y\in S} \|x-y\|_\infty$, the sup-norm distance of the point $x$ to the set $S$.
\begin{lemma}\label{lemma:fourier-analytic}
    Suppose $f\in \mathcal{A}_{r,m}(\Gamma_\beta)$, $r>0$ with $\|f\|_{r}\leq M_0$. Then,
    \begin{enumerate}[label=(\roman*)]
        \item there exists an analytic extension of $f$ to $G_r:=\{z\in \mathbb{C}: d_\infty(z,\overline{\Gamma}_\beta)\leq \frac{r}{4} \}$ with
        \begin{equation}
            \sup_{z\in G_r}|f(z)|\leq M_1
        \end{equation}
        for some $M_1=M_1(r,M_0,m)$.
        \item $\sup_{x\in\overline{\Gamma}_\beta}\left| (\partial^k) f(x) \right|\leq M_2 (k!)M_2^{|k|}$
        for some $M_2=M_2(M_1,r)$.
        \item $f\in \mathcal{R}_2(M)$ for some $M=M(M_1)$.
    \end{enumerate}
\end{lemma}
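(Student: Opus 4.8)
The plan is to prove the three claims in order, since each builds on the previous one.

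\textbf{Step (i): analytic extension and uniform bound.}
Starting from $f = g|_{\Gamma_\beta}$ with $g \in \mathcal{A}_r(\mathbb{T}^m)$ and $\|f\|_r = \|g\|_{r,\mathbb{T}^m} \le M_0$, I would write $g(x) = \sum_{k \in \mathbb{Z}^m} g_k \phi_k(x)$ with $\sum_k |g_k|^2 e^{r|k|} \le M_0^2$, and then form the candidate extension $g(z) := \sum_k g_k \phi_k(z)$ for $z \in \mathbb{C}^m$, using that each real trigonometric basis function $\phi_k$ (a sine or cosine of $k\cdot x$, up to normalisation) extends to an entire function of $z$ with $|\phi_k(z)| \lesssim e^{|k|\,|\mathrm{Im}\, z|_\infty} \lesssim e^{|k|\, d_\infty(z,\mathbb{T}^m)}$ — more precisely $|\cos(k\cdot z)|, |\sin(k\cdot z)| \le e^{|k_1||\mathrm{Im}\,z_1| + \dots + |k_m||\mathrm{Im}\,z_m|} \le e^{|k| \|\mathrm{Im}\,z\|_\infty}$. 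For $z \in G_r$, i.e. $d_\infty(z,\overline\Gamma_\beta) \le r/4$, each term is bounded by $|g_k| \cdot C e^{|k| r/4}$, and by Cauchy–Schwarz $\sum_k |g_k| e^{|k|r/4} \le \big(\sum_k |g_k|^2 e^{r|k|}\big)^{1/2} \big(\sum_k e^{-|k|r/2}\big)^{1/2} \le M_0 \cdot C(r,m)$, so the series converges absolutely and uniformly on $G_r$ to a holomorphic function, giving $\sup_{z \in G_r} |f(z)| \le M_1(r,M_0,m)$. The tail sum $\sum_{k \in \mathbb{Z}^m} e^{-|k|r/2}$ is finite because it factors (after passing to $\|k\|_1$) as a product/convolution of geometric series; this is where the choice $|k|$ rather than $\|k\|_2$ in the definition of $\mathcal{A}_r$ is convenient.

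\textbf{Step (ii): derivative bounds via Cauchy estimates.}
Given the holomorphic bound on $G_r$, I would apply the multivariate Cauchy integral formula on polydiscs. For $x \in \overline\Gamma_\beta$ the closed polydisc of polyradius $(\rho,\dots,\rho)$ centred at $x$ with $\rho = r/4$ lies in $G_r$ (by definition of $d_\infty$), so the Cauchy estimates give $|\partial^k f(x)| \le \frac{k!}{\rho^{|k|}} \sup_{G_r}|f| \le M_1 \, k! \, (4/r)^{|k|}$, which is exactly the claimed bound with $M_2 = M_2(M_1,r) := \max(M_1, 4/r)$.

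\textbf{Step (iii): membership in $\mathcal{R}_2(M)$.}
This is essentially a bookkeeping step. Recall $\mathcal{R}_2(M)$ requires $\beta \in C^\infty(\overline\Gamma_\beta)$ with $\|\beta\|_{L^\infty(\Gamma_\beta)} \le M$ and $\sup_x |\partial^k \beta(x)| \le M (k!) M^{|k|}$. Smoothness of $f$ on $\overline\Gamma_\beta$ is immediate from (i) (restriction of a holomorphic function), the $L^\infty$ bound $\|f\|_{L^\infty(\Gamma_\beta)} \le M_1$ follows from (i), and the derivative bound is (ii). Taking $M = M(M_1) := \max(M_1, M_2)$ — noting $M_2$ itself depends only on $M_1$ and $r$, and $r$ is fixed — all three conditions hold, so $f \in \mathcal{R}_2(M)$.

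The main obstacle, such as it is, is being careful about constants and about the precise complex-analytic bound on the extended basis functions $\phi_k(z)$: one must verify that the normalisation factors ($1/\sqrt\pi$, etc.) do not accumulate and that the exponential growth rate in the imaginary direction is exactly $|k|$ (in the $\ell^1$ sense) so that it is absorbed by the $e^{r|k|}$ weight with room to spare for the Cauchy–Schwarz tail. Everything else — absolute convergence implying holomorphy (Weierstrass), Cauchy estimates on polydiscs, and the final matching of definitions — is routine.
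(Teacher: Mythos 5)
Your steps (i) and (ii) are correct, and your route through (i) is genuinely more direct than the paper's: rather than extending each basis function and summing with Cauchy--Schwarz against the weight $e^{r|k|}$, the paper rewrites the series as a Laurent series in $w=(e^{iz_1},\dots,e^{iz_m})$, splits it into $2^m$ power series analytic on a polycylinder, and composes with $w_i\mapsto w_i^{-1}$ to obtain analyticity on a polyannulus, i.e.\ on the strip $|\mathrm{Im}(z_i)|<r/2$. Your term-by-term estimate $|\phi_k(z)|\le C e^{|k|\,\|\mathrm{Im}\,z\|_\infty}$ combined with Weierstrass convergence gives the same conclusion with less machinery, and the tail $\sum_{k\in\mathbb{Z}^m}e^{-r|k|/2}$ does factor over coordinates precisely because of the $\ell^1$ choice of $|k|$. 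Step (ii) via Cauchy estimates on polydiscs of polyradius $r/4$ centred at points of $\overline{\Gamma}_\beta$ is exactly what the paper does.

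Step (iii), however, has a genuine gap. You quote the definition of $\mathcal{R}_2(M)$ correctly --- the derivative condition is $\sup_x|(\partial^k\beta)(x)|\le M(k!)M^{|k|}$ with $\beta=m_\beta+e^{\theta}$ --- but you then verify it for $f$ itself (``the derivative bound is (ii)'') and take $M=\max(M_1,M_2)$. The condition is on the derivatives of $e^{f}$, not of $f$, and these are not interchangeable: $\partial^k(e^f)$ is a Fa\`a di Bruno combination of derivatives of $f$, and deducing the factorial bound for it from (ii) alone would require nontrivial extra combinatorics. The paper's step (iii) supplies the missing idea in one line: $z\mapsto e^{f(z)}$ is holomorphic on $G_r$ with $\sup_{z\in G_r}|e^{f(z)}|\le e^{M_1}$, so the Cauchy-estimate argument of (ii) applied to $e^{f}$ rather than to $f$ yields $\sup_x|\partial^k(e^f)(x)|\le M(k!)M^{|k|}$ with $M$ depending on $M_1$ (and $r$); together with $\|f\|_{L^\infty(\Gamma_\beta)}\le M_1$ from (i), this gives $f\in\mathcal{R}_2(M)$. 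As written, your argument proves membership in the set obtained by replacing $\partial^k\beta$ with $\partial^k\theta$ in the definition, which is not the set for which the stability estimate of Lemma \ref{lemma:stability} is stated.
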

\begin{proof}
    We complete the proof for $m=2$ and note that the case for other $m\in \mathbb{N}$ follows in the same way.\\\\
    $(i)$ By assumption $f$ is the restriction of a function in $\mathcal{A}_r(\mathbb{T}^2)$ with $\|f\|_{r,\mathbb{T}^2}\leq M_0$, which implies for the usual Fourier coefficients $\hat{f}_k := \frac{1}{2\pi}\langle f, e^{ik\cdot x}\rangle_{L^2(\mathbb{T}^2)}$
    $$|\hat{f}_k|\leq M_0 e^{-\frac{r}{2}|k|}.$$
    Define the `polycylinder'
    $$P_\rho = \{w\in \mathbb{C}^2: |w_1|<e^{\rho/2}, |w_2|<e^{\rho/2}\}.$$
    Take a compact set $K\subset P_r$, then for any $w\in K$, the family of functions $\{\hat{f}_k w^k\}_{k\in \mathbb{N}_0^2}$ (where $w^k=w_1^{k_1}w_2^{k_2}$)
    is bounded. Then by the argument of \cite[Corollary 1.5.9.2]{scheidemann2005}, the function $w\mapsto \sum_{k \in \mathbb{N}^2_0} \hat{f}_kw^k$ is complex analytic in $P_r$. In fact, by the same argument the four power series 
    \begin{equation}\label{eq:power-series}
       w\mapsto \sum_{k\in \mathbb{N}_0^2} \hat{f}_{\pm k_1, \pm k_2} w^k 
    \end{equation}
    are complex analytic in $P_r$. Further, for all $w\in \overline{P}_{r/2}$
    \begin{equation}\label{eq:upper-bound-on-series}
        \left | \sum_{k\in \mathbb{N}_0^2} \hat{f}_{\pm k_1, \pm k_2} w^k \right | \leq \sum_{k\in \mathbb{N}_0^2} |\hat{f}_{\pm k_1, \pm k_2}||w|^k \leq C(M_0,r) 
    \end{equation}
    Now decompose the following Laurent series into four similar power series as
    \begin{align}
        \sum_{k\in \mathbb{Z}^2} \hat{f}_k w^k  &= \sum_{k_1 = 1}^\infty \sum_{k_2 = 0}^\infty \hat{f}_{-k_1,k_2} w_1^{-k_1} w_2^{k_2},\\
        &\,\,\,\,+ \sum_{k_1 = 1}^\infty \sum_{k_2 = 1}^\infty \hat{f}_{-k_1,-k_2} w_1^{-k_1} w_2^{-k_2},\\
        &\,\,\,\,+ \sum_{k_1 = 0}^\infty \sum_{k_2 = 0}^\infty \hat{f}_{k_1,k_2} w_1^{k_1} w_2^{k_2},\\
        &\,\,\,\,+ \sum_{k_1 = 0}^\infty \sum_{k_2 = 1}^\infty \hat{f}_{k_1,-k_2} w_1^{k_1} w_2^{-k_2}.
    \end{align}
Consider first the first term. This has the form $w\mapsto g(w_1^{-1},w_2)$ for a function $g_1$ on the form \eqref{eq:power-series} complex analytic in $P_\rho$. The function $w\mapsto (w_1^{-1},w_2)$ is complex analytic in for example $\{w\in\mathbb{C}^2:w_1 > e^{-r/2}, w_2 < e^{r/2}\}$, since $w\mapsto w_i$ is complex analytic everywhere and $w\mapsto w_i^{-1}$ is complex analytic for $w_i$ away from zero, $i=1,2$, see \cite[Proposition 1.2.2]{scheidemann2005}. Then also $w\mapsto g_1(w_1^{-1},w_2)$ is complex analytic in $\{w\in\mathbb{C}^2:w_1 > e^{-r/2}, w_2 < e^{r/2}\}$, since compositions of analytic functions are analytic, see again \cite[Proposition 1.2.2]{scheidemann2005}. Continuing this argument for each term above, we find that 
$$g(w):=\sum_{k\in\mathbb{Z}^2} \hat{f}_k w^k$$
is complex analytic in the `polyannulus' $\{w\in \mathbb{C}^2: e^{-r/2}<w_i < e^{r/2},i=1,2\}$. Using that $z\mapsto e^z$ is entire on $\mathbb{C}$ and again the composition rule, we find that
$$z\mapsto g(e^{iz_1},e^{iz_2}) = \sum_{k\in\mathbb{Z}^2} \hat{f}_k e^{ik\cdot z}$$
is complex analytic in $\{z\in \mathbb{C}^2: |\mathrm{Im}(z_i)|<r/2, i=1,2\}$. Moreover, since \eqref{eq:upper-bound-on-series} is a bound for each of the four power series which make up $f$ and $G_r$ is a subset of the strip of where it is defined, we conclude
\begin{equation}\label{eq:bound-in-paley}
    \sup_{z\in G_r}|f(z)|\leq M_1(r,M_0).
\end{equation}
$(ii)$ The Cauchy integral inequality in \cite[Theorem 1.3.3]{scheidemann2005} gives the estimate 
$$\sup_{|z_i| < r/4, i=1,2} |(\partial^k f)(z)|\leq (k!) (r/4)^{|k|} \sup_{|z_i| = r/4, i=1,2}|f(z)|.$$
Since $\overline{\Gamma}_\beta$ is compact, we can cover it by real translations of $\{z\in \mathbb{C}^2: |z_i| < r/4, i=1,2\}$ and conclude by \eqref{eq:bound-in-paley} that there exists a constant $M_2=M_2(M_1,r)$ such that
$$\sup_{x\in \overline{\Gamma}_\beta} |(\partial^k f)(x)|\leq M_2 (k!)M_2^{|k|}.$$
$(iii)$ Since $z\mapsto e^z$ is entire, also $z\mapsto e^{f(z)}$ is complex analytic in $G_r$ with a bound $\sup_{z\in G_r}|e^{f(z)}|\leq e^{M_1}$. Repeating the same arguments as of $(ii)$ we conclude that $f\in \mathcal{R}_2(M)$ for some $M=M(M_1)$.
\end{proof}

We now return to the question of consistency, which involves precise statements on the prior we use. Since $\tilde{\Pi}_2$ is a Gaussian measure in $C(\overline{\Gamma}_\beta)$, a covering number bound of the unit norm-ball in the RKHS $\mathcal{H}_2=\mathcal{A}_{r}(\Gamma_\beta)$, yields a bound on the measure of small norm balls, see \cite{li1999}. To make use of this, we define the notion of covering numbers as follows. Let the covering number $N(A,d,\rho)$ for $A\subset X$ of some space $X$ endowed with a semimetric $d$, denote the minimum number of closed $d$-balls $\{x\in X: d(x_0,x)\leq \rho\}$ with center $x_0\in A$ and radius $\rho>0$ needed to cover $A$, see for example \cite[Appendix C]{ghosal2017} or \cite[Section 4.3.7]{gine2016}. When $d$ is replaced by a norm, we mean the metric induced by the norm. The following consequence of \cite[Proposition C.9]{ghosal2017} allows us to bound the unit norm ball of $\mathcal{A}_r(\Gamma_\beta)$.
\begin{lemma}\label{lemma:covering-number-analytic}
    The class $A(M_1)$ of all functions $f:[0,1]^m\rightarrow \mathbb{R}$ that can be extended to a complex analytic function on $G_r$ with $\sup_{z\in G_r}|f(z)|\leq M_1$ for some $M_1>0$ and $r>0$, satisfies for all $\rho>0$ sufficiently small
    \begin{equation}\label{eq:wanted-log-N}
        \log N(A(M_1), \|\cdot\|_{\infty},\rho) \leq C(r,m,M_1)\log(\rho^{-1})^{1+m}.
    \end{equation}
\end{lemma}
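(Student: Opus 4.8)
The plan is to approximate each $f\in A(M_1)$ on $[0,1]^m$ by a truncated Taylor polynomial of degree $L\asymp\log(\rho^{-1})$ and then to replace that polynomial by one taken from a finite $\delta$-grid of its coefficients; the crucial point is that boundedness of the analytic continuation on the \emph{fixed} neighbourhood $G_r$ forces the truncation error to decay geometrically in $L$. First I would localise: cover $[0,1]^m$ by a number $J=J(r,m)$ of cubes $Q_1,\dots,Q_J$ of side $r/16$ (shrinking them by a fixed factor if $r$ is large), chosen so that for each $i$ the closed complex polydisc $\Delta_i$ of polyradius $r/4$ centred at the midpoint $c_i\in[0,1]^m$ of $Q_i$ lies inside $G_r$ — this is where the $r/4$ in the definition of $G_r$ enters. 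On $\Delta_i$ the function $f$ has a convergent power series $f(z)=\sum_{k\in\mathbb{N}_0^m}a_k^{(i)}(z-c_i)^k$, and the multivariate Cauchy estimates give $|a_k^{(i)}|\le M_1(4/r)^{|k|}$ for all $k$.

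\textbf{Truncation.} Let $P_i^L$ be the partial sum over $\{|k|\le L\}$. Since $|x_j-(c_i)_j|\le r/32\le(r/4)/4$ on $Q_i$, the multivariate geometric tail estimate yields
\[
\sup_{x\in Q_i}\bigl|f(x)-P_i^L(x)\bigr|\;\le\; M_1\sum_{|k|>L}4^{-|k|}\;\le\; C(m)\,M_1\,L^{m-1}4^{-L}.
\]
Hence there is $C'=C'(m)$ such that $L=L(\rho):=\lceil C'\log(\rho^{-1})\rceil$ makes the right-hand side at most $\rho/2$ once $\rho$ is small enough (the subdominant $\log L$ and $\log M_1$ being absorbed into the constant). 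Gluing the $P_i^L$ into the piecewise polynomial $P$ equal to $P_i^L$ on $Q_i$ gives $\|f-P\|_{L^\infty([0,1]^m)}\le\rho/2$, so it suffices to build a $\rho/2$-net for the finite-dimensional family of piecewise polynomials whose coefficients satisfy $|a_k^{(i)}|\le M_1(4/r)^{|k|}$.

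\textbf{Counting.} For each $i$ there are $\binom{L+m}{m}\asymp L^m$ coefficients $a_k^{(i)}$, each lying in an interval of length $2M_1(4/r)^{|k|}$. Discretising every such interval with a common mesh $\delta:=c(r,m)\,\rho$ changes each $P_i^L$ on $Q_i$ by at most $\delta\sum_{|k|\le L}(r/32)^{|k|}\le C(r,m)\delta\le\rho/2$ (a convergent geometric series), so rounding the coefficients of $P$ to this grid produces an element of an explicit $\rho$-net. Taking logarithms,
\begin{align*}
\log N\bigl(A(M_1),\|\cdot\|_\infty,\rho\bigr)\;&\le\;\sum_{i=1}^{J}\sum_{|k|\le L}\log\!\Bigl(2+\tfrac{2M_1(4/r)^{|k|}}{\delta}\Bigr)\\
&\le\; C(r,m,M_1)\Bigl(\sum_{|k|\le L}|k|\;+\;L^m\log(\rho^{-1})\Bigr)\;\le\; C(r,m,M_1)\,L^{m+1},
\end{align*}
since $\sum_{|k|\le L}|k|\asymp L^{m+1}$; inserting $L\asymp\log(\rho^{-1})$ yields \eqref{eq:wanted-log-N} for all sufficiently small $\rho$.

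\textbf{Main difficulty.} The delicate point is the multivariate bookkeeping: one must check that neither the geometrically growing Cauchy bounds $(4/r)^{|k|}$ nor the sheer number of coefficients contributes worse than $L^{m+1}$ to $\log N$ — both ultimately do not, because $\sum_{|k|\le L}|k|\asymp L^{m+1}$ and $L^m\log(\rho^{-1})\asymp L^{m+1}$ — and that the three scales $L$, $\delta$ and $J$ can be coupled so that the final estimate carries exactly the exponent $1+m$. Since the lemma is advertised as a consequence of \cite[Proposition C.9]{ghosal2017}, an alternative is simply to verify that $A(M_1)$ meets the hypotheses of that proposition (bounded analytic continuation to a fixed complex neighbourhood of the cube) and quote it directly; the argument above is essentially its proof transcribed to the polydisc picture.
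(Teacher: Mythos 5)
Your proof is correct, but it takes a genuinely different route from the paper. The paper's proof is a two-line reduction: it invokes Proposition C.9 of Ghosal and van der Vaart directly for the normalised class $A(1)$, passes to $A(M_1)$ by the homogeneity of covering numbers ($N(A(M_1),\|\cdot\|_\infty,\rho)=N(A(1),\|\cdot\|_\infty,\rho M_1^{-1})$), and then absorbs the resulting $\log(M_1\rho^{-1})^{1+m}$ into $C(M_1,m)\log(\rho^{-1})^{1+m}$ via the elementary convexity inequality $(x+y)^{m+1}\le 2^m(x^{m+1}+y^{m+1})$. You instead reprove the entropy bound from scratch by the classical Kolmogorov--Tikhomirov argument: localisation into cubes fitting polydiscs inside $G_r$, Cauchy estimates $|a_k^{(i)}|\le M_1(4/r)^{|k|}$, truncation at degree $L\asymp\log(\rho^{-1})$ with geometric tail control, and discretisation of the $O(L^m)$ coefficients per cube, with the exponent $1+m$ emerging from $\sum_{|k|\le L}|k|\asymp L^{m+1}$ and $L^m\log(\rho^{-1})\asymp L^{m+1}$. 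Your bookkeeping is sound (indeed your ratio $r/32$ to polyradius $r/4$ gives $8^{-|k|}$, better than the $4^{-|k|}$ you use), and handling $M_1$ inside the grid spacing neatly avoids the paper's rescaling step; the only cosmetic point is that your net centres are piecewise polynomials rather than elements of $A(M_1)$, so to match the paper's internal covering-number convention you should pass from an external $\rho/2$-net to an internal $\rho$-net in the standard way, which costs nothing in the bound. What your approach buys is self-containedness and an explicit construction; what the paper's buys is brevity and a cleaner audit trail to the cited reference, which is essentially the same argument packaged as a black box.
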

\begin{proof}
    Proposition C.9 in \cite{ghosal2017} states that
    $$\log N(A(1), \|\cdot\|_{\infty},\rho)\leq C(m)r^{-m}\log(\rho^{-1})^{1+m}$$
    for all $0<\rho<1/2$. Since $\|\cdot\|_{\infty}\leq \|\cdot\|_{C([0,1]^m)}$ \cite[eq. (4.172)]{gine2016} gives
    $$\log N(A(1), \|\cdot\|_{\infty},\rho)\leq \log N(A(1), \|\cdot\|_{C([0,1]^m)},\rho).$$
    Then, combining the two last displays with \cite[eq. (4.171)]{gine2016} we have 
    \begin{align}
       \log N(A(M_1), \|\cdot\|_{\infty},\rho) &= \log N(A(1), \|\cdot\|_{\infty},\rho M_1^{-1}),\\
       &\leq \log N(A(1), \|\cdot\|_{C([0,1]^m)},\rho M_1^{-1}),\\
       &\leq C(r,m)\log(M_1 \rho^{-1})^{1+m}.
    \end{align}
    By the convexity of $x\mapsto x^{1+m}$, $m\geq 1$, we have the inequality $(x+y)^{m+1}\leq 2^{m}(x^{m+1}+y^{m+1})$ for $x,y\in \mathbb{R}$. Then for $\rho$ small enough, 
    $$\log(M_1 \rho^{-1})^{1+m} \leq C(M_1,m)\log(\rho)^{1+m},$$
    and hence \eqref{eq:wanted-log-N} is satisfied. 
\end{proof}

Since we constructed $\Pi_2$ for $m=1$, we consider from now only this case, although everything generalizes to higher dimensions. See also Remark \ref{remark:generalization-dimension} below.
To this end, denote the unit norm ball of $\mathcal{H}_2=\mathcal{A}_r(\Gamma_\beta)$ by
$$B_{\mathcal{H}_2}:=\{f\in \mathcal{H}_2:\|f\|_{\mathcal{H}_2}\leq 1\}.$$
Note that $B_{\mathcal{H}_2}\subset A(M_1)$ for some $M_1=M_1(r)$ by Lemma \ref{lemma:fourier-analytic} (i).

\begin{lemma} \label{lemma:small-ball-lemma}
Let $\phi(\rho):=-\log \tilde{\Pi}_2(\theta \in C(\overline{\Gamma}_\beta):\|\theta\|_{\infty} \leq \rho)$
where $\tilde{\Pi}_2$ is dependent on $r>0$. For all $\rho>0$ sufficiently small,
    \begin{equation}\label{eq:small-ball-prob-bound}
    \phi(\rho) \leq C(r) \log(\rho^{-1})^{2}.
\end{equation}
\end{lemma}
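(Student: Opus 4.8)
The plan is to read \eqref{eq:small-ball-prob-bound} off the Kuelbs--Li / Li--Linde correspondence \cite{li1999} between the small-deviation function of a centered Gaussian measure and the metric entropy of the unit ball of its RKHS. Recall from Section~\ref{sec:RKHS} that $\tilde{\Pi}_2$ is a centered Gaussian Borel probability measure on the separable Banach space $C(\overline{\Gamma}_\beta)$ with reproducing kernel Hilbert space $\mathcal{H}_2=\mathcal{A}_r(\Gamma_\beta)$ and unit ball $B_{\mathcal{H}_2}$. Thus $\phi(\rho)=-\log\tilde{\Pi}_2(\|\theta\|_\infty\le\rho)$ is exactly the small-ball function of $\tilde{\Pi}_2$ with respect to $\|\cdot\|_\infty$, and \cite{li1999} controls it in terms of $\rho\mapsto\log N(B_{\mathcal{H}_2},\|\cdot\|_\infty,\rho)$.

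The first step is to bound this entropy. As already observed after the definition of $B_{\mathcal{H}_2}$, Lemma~\ref{lemma:fourier-analytic}(i) with $M_0=1$ gives $B_{\mathcal{H}_2}\subset A(M_1)$ for some $M_1=M_1(r)$, where $A(M_1)$ denotes the class of functions admitting a complex analytic extension to $G_r$ bounded by $M_1$. Hence, by monotonicity of covering numbers and Lemma~\ref{lemma:covering-number-analytic} with $m=1$,
\[
    \log N\bigl(B_{\mathcal{H}_2},\|\cdot\|_\infty,\rho\bigr)\le \log N\bigl(A(M_1),\|\cdot\|_\infty,\rho\bigr)\le C(r)\,\log(\rho^{-1})^{2}
\]
for all $\rho>0$ small enough.

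The second step is to feed this into \cite{li1999}. The majorant $\psi(\rho):=C(r)\log(\rho^{-1})^{2}$ is decreasing in $\rho$ and satisfies the mild regularity required by that theory (for instance $\psi(c\rho)\le C_c\,\psi(\rho)$ once $\rho<c$, since then $\log(c^{-1}\rho^{-1})\le 2\log(\rho^{-1})$). Applying the entropy-to-small-ball estimate of \cite{li1999} in its polylogarithmic regime, where the order in $\log(\rho^{-1})$ is preserved, gives $\phi(\rho)\le C'(r)\log(\rho^{-1})^{2}$ for all small $\rho$, which is \eqref{eq:small-ball-prob-bound}.

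I expect the only real difficulty to be one of formulation: the cleanest statements of \cite{li1999} linking metric entropy of the RKHS ball to small deviations are tailored to polynomially decaying entropy, so one must pass to, and check the hypotheses of, the limiting polylogarithmic version for $\psi(\rho)=C(r)\log(\rho^{-1})^{2}$; the remaining bookkeeping is routine. As an alternative route that avoids this, one may argue directly from the Karhunen--Lo\`eve series $\tilde{\theta}_2=\sum_{k\in\mathbb{Z}}g_k\,e^{-rk^{2}/2}\phi_k$: taking $K\asymp\sqrt{r^{-1}\log(\rho^{-1})}$, the event $\{|g_k|\le c\rho/K\ \text{for}\ |k|\le K\}$ has probability at least $(c'\rho/K)^{2K+1}$, whose negative logarithm is $\lesssim_r\log(\rho^{-1})^{3/2}$, while the complementary tail $\sum_{|k|>K}|g_k|e^{-rk^{2}/2}$ has mean $O(\rho^{2})$ and is therefore $\le\rho/2$ with probability at least $\tfrac12$ by Markov's inequality; on the intersection of these two independent events $\|\tilde{\theta}_2\|_\infty\le\rho$, which yields the (slightly stronger) bound $\phi(\rho)\lesssim_r\log(\rho^{-1})^{3/2}$.
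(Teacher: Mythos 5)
Your first route is essentially the paper's route, but you stop exactly where the real work is: the transfer from the polylogarithmic entropy bound $\log N(B_{\mathcal{H}_2},\|\cdot\|_\infty,\rho)\le C(r)\log(\rho^{-1})^2$ to the small-ball bound is not something you can cite off the shelf from \cite{li1999}, whose clean statements are indeed for polynomial entropy, and you leave "the limiting polylogarithmic version" unverified. The paper closes this in two steps, following \cite[Lemma 4.6]{vaart2009}: first it weakens the entropy bound to $\log N(B_{\mathcal{H}_2},\|\cdot\|_\infty,\rho)\le C(r)\rho^{-1}$ so that Theorem 1.2 of \cite{li1999} applies and yields the crude estimate $\phi(\rho)\le C\rho^{-2}$, and then it bootstraps via the Kuelbs--Li type inequality $\phi(2\rho)\le\log N\bigl(B_{\mathcal{H}_2},\|\cdot\|_\infty,2\rho[2\phi(\rho)]^{-1/2}\bigr)$, which together with the crude bound evaluates the entropy at a scale $\gtrsim\rho^2$ and hence, by the sharp $\log(\rho^{-1})^2$ entropy bound, gives \eqref{eq:small-ball-prob-bound}. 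Your alternative Karhunen--Lo\`eve argument, on the other hand, is a genuinely different and complete proof: with $K\asymp\sqrt{r^{-1}\log(\rho^{-1})}$ the coordinatewise event has $-\log$-probability $\lesssim K\log(\rho^{-1})\asymp\log(\rho^{-1})^{3/2}$, the tail is controlled by Markov's inequality, and independence of the two blocks of $g_k$ finishes the argument; this is elementary, avoids \cite{li1999} and the bootstrap entirely, and even yields the stronger bound $\phi(\rho)\lesssim_r\log(\rho^{-1})^{3/2}$, which of course implies \eqref{eq:small-ball-prob-bound}. What the paper's route buys is generality (it only uses the RKHS entropy, so it extends verbatim to $m>1$ and other exponential decays as in Remark \ref{remark:generalization-dimension}); what your direct route buys is a self-contained proof and a sharper exponent in this specific one-dimensional squared-exponential case.
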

\begin{proof}
    We follow \cite[Lemma 4.6]{vaart2009}. Theorem 1.2 of \cite{li1999} initially gives the estimate
    \begin{equation}\label{eq:crude-bound}
        \phi(\rho)\leq C(r,M_1)\rho^{-2},
    \end{equation}
for all $\rho>0$ sufficiently small, since 
$$\log N(B_{\mathcal{H}_2}, \|\cdot\|_{\infty},\rho) \leq C(r) \rho^{-1},$$
for all $\rho>0$ sufficiently small by Lemma \ref{lemma:covering-number-analytic} for $m=1$. The first display of the proof of Lemma 4.6 in \cite{vaart2009} provides the inequality
$$\phi(2\rho)\leq \log N(B_{\mathcal{H}_2},\|\cdot\|_{\infty},2\rho [2\phi(\rho)]^{-1/2}).$$
Combining this with \eqref{eq:crude-bound} then gives \eqref{eq:small-ball-prob-bound}.
\end{proof}

The following result corresponds to Theorem 2.2.2 of \cite{nickl2023} for rescaled Gaussian priors for analytic functions. We define $d_{\mathcal{G}}(\theta_1,\theta_2):=\|\mathcal{G}(\theta_1)-\mathcal{G}(\theta_2)\|_{L^2(\Gamma)}$ for all $\theta_1,\theta_2\in \Theta$.

\begin{lemma}\label{lemma:analytic-consistency}
    Let $\theta_0 \in \mathcal{H}_2=\mathcal{A}_r(\Gamma_\beta)$, $r>0$,
    and $\Pi_2$ be as defined in \eqref{eq:prior-def}. Set,
    \begin{equation}\label{eq:choice-of-delta}
        \delta_N = N^{-1/2}\log(N).
    \end{equation}
    Let $U>0$ be large enough depending on $\tilde{\Pi}_{2}$, $r$ and such that $\|\mathcal{G}(\theta_0)\|_{C(\overline{\Gamma})}\leq U$.
    Then, there exists Borel measurable sets $\Theta_N$ such that
    \begin{enumerate}[label=(\roman*)]
        \item $\Pi(\theta:d_{\mathcal{G}}(\theta,\theta_0)\leq \delta_N, \|\mathcal{G}(\theta)\|_{C(\overline{\Gamma})}\leq U) \geq e^{-C_1 N\delta_N^2}$ for some $C_1>0$,
        \item $\Pi_2(\Theta_N^c) \leq e^{-C_2N\delta_N^2}$ for $C_2>C_1+2$.
        \item $\log N(\Theta_N, d_{\mathcal{G}}, m_0\delta_N) \leq C(C_2,r)N\delta_N^2$ for $m_0>0$ large enough 
    \end{enumerate}
    for all $N$ sufficiently large. 
\end{lemma}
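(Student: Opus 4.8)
The plan is to verify that $\Pi_2$ satisfies the three standard ``information'' conditions of Bayesian nonparametrics --- prior mass near $\theta_0$, a sieve of small complementary mass, and a metric entropy bound on the sieve --- following the scheme of the proof of Theorem 2.2.2 in \cite{nickl2023}, but with the sub-polynomial small-ball estimate of Lemma \ref{lemma:small-ball-lemma} in place of the Matérn one; this is precisely what dictates the choices $\kappa_{N,2}=\log(N)^{-1}$ and $\delta_N=N^{-1/2}\log N$. It is convenient to regard $\tilde{\Pi}_2$ as a centred Gaussian Borel measure on a separable Banach space $E$ with $\mathcal{A}_r(\Gamma_\beta)\hookrightarrow E\hookrightarrow H^1(\Gamma_\beta)\hookrightarrow C(\overline{\Gamma}_\beta)$ continuously --- for instance $E=\mathcal{A}_q(\Gamma_\beta)$ for some $q<r$, which is legitimate since $\tilde{\theta}_2\in\mathcal{A}_q(\Gamma_\beta)$ almost surely; the RKHS of $\tilde{\Pi}_2$ is $\mathcal{H}_2=\mathcal{A}_r(\Gamma_\beta)$, and the RKHS of $\Pi_2=\mathcal{L}(\kappa_{N,2}\tilde{\theta}_2)$ is again $\mathcal{H}_2$, now equipped with the norm $\kappa_{N,2}^{-1}\|\cdot\|_{\mathcal{H}_2}$. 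One mild and routine point is that the centred small-ball bound must be upgraded from $\|\cdot\|_\infty$ to the $E$-norm; this follows from the same argument as in the proof of Lemma \ref{lemma:small-ball-lemma} once one observes that balls of $\mathcal{A}_r(\Gamma_\beta)$ have $E$-metric entropy of order $\log(\rho^{-1})^{2}$ (truncate the exponentially decaying Fourier series at frequency $\asymp\log(\rho^{-1})$). I will use repeatedly the identity $N\delta_N^2=(\log N)^2=\kappa_{N,2}^{-2}$.

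For (i), I would first use Lemma \ref{lemma:forward-reg}(ii) to produce $c>0$ with $\{\theta:\|\theta-\theta_0\|_\infty\le c\delta_N\}\subseteq\{\theta:d_{\mathcal{G}}(\theta,\theta_0)\le\delta_N\}$ for all large $N$ (a sup-norm ball around $\theta_0$ keeps $\|\theta\|_\infty$ uniformly bounded, which is all that lemma needs). Since $\theta_0\in\mathcal{H}_2$ lies in the RKHS of $\Pi_2$, the decentering term in the Gaussian concentration function is simply $\tfrac12\kappa_{N,2}^{-2}\|\theta_0\|_{\mathcal{H}_2}^2$ with no approximation needed, and the shifted small-ball inequality (see e.g.\ \cite{vaart2009,ghosal2017}) gives
\[
-\log\Pi_2\big(\|\theta-\theta_0\|_\infty\le c\delta_N\big)\ \le\ \tfrac12\kappa_{N,2}^{-2}\|\theta_0\|_{\mathcal{H}_2}^2+\phi\big(c\,\delta_N/\kappa_{N,2}\big),
\]
with $\phi$ as in Lemma \ref{lemma:small-ball-lemma}. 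Here $\delta_N/\kappa_{N,2}=N^{-1/2}(\log N)^2\to0$, so $\phi(c\,\delta_N/\kappa_{N,2})\le C\log\!\big(N^{1/2}(\log N)^{-2}\big)^2\le C'(\log N)^2$, while the first term equals $\tfrac12(\log N)^2\|\theta_0\|_{\mathcal{H}_2}^2$; hence the right-hand side is $\le C_1 N\delta_N^2$. Finally, intersecting with the sieve $\Theta_N$ below --- on which $\|\mathcal{G}(\theta)\|_{C(\overline{\Gamma})}\le U$, and whose complement has $\Pi_2$-mass $\le e^{-C_2N\delta_N^2}$ with $C_2>C_1$ --- changes this lower bound only by a factor $1/2$ for large $N$, which gives (i).

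For (ii) and (iii), I would take $\Theta_N:=M_N\kappa_{N,2}B_{\mathcal{H}_2}+\delta_N B_E$ with $M_N:=c_0\sqrt{N}\,\delta_N=c_0\log N$; the crucial observation is that $M_N\kappa_{N,2}=c_0$, so the RKHS component is the \emph{fixed} ball $c_0 B_{\mathcal{A}_r(\Gamma_\beta)}$. Borell's inequality together with the $E$-version of the small-ball bound gives $\Pi_2(\Theta_N^c)\le 1-\Phi\big(\Phi^{-1}(e^{-C(\log N)^2})+M_N\big)\le e^{-C_2(\log N)^2}=e^{-C_2N\delta_N^2}$ once $c_0$ is large, and $C_2$ can be pushed beyond $C_1+2$ by further enlarging $c_0$; this is (ii). For (iii), every $\theta\in\Theta_N$ is bounded in $E$, hence in $H^1(\Gamma_\beta)$, by a constant depending only on $c_0$; by the local Lipschitz bound \eqref{eq:beta-to-theta} so is $\beta(\theta)=m_\beta+e^{\theta}$, whence Lemma \ref{lemma:forward-reg}(i) yields $\|\mathcal{G}(\theta)\|_{C(\overline{\Gamma})}\le U$ for a suitable $U$ depending on $c_0$ (hence, ultimately, on $\tilde{\Pi}_2$ and $r$), which is the bound already invoked in (i)--(ii). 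Writing $\theta=\theta^{(1)}+\theta^{(2)}$ with $\theta^{(1)}\in c_0 B_{\mathcal{A}_r(\Gamma_\beta)}\subseteq A(M_1)$ by Lemma \ref{lemma:fourier-analytic}(i) and $\|\theta^{(2)}\|_E\le\delta_N$, Lemma \ref{lemma:covering-number-analytic} with $m=1$ furnishes a $\delta_N$-net of $c_0 B_{\mathcal{A}_r(\Gamma_\beta)}$ in $\|\cdot\|_\infty$ of log-cardinality $\le C\log(\delta_N^{-1})^2\le C'(\log N)^2$; since all relevant functions are uniformly sup-norm bounded, Lemma \ref{lemma:forward-reg}(ii) turns this into a $d_{\mathcal{G}}$-net of $\Theta_N$ of radius $m_0\delta_N$ for $m_0$ large, so that $\log N(\Theta_N,d_{\mathcal{G}},m_0\delta_N)\le C N\delta_N^2$, which is (iii).

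The genuine PDE content --- forward regularity, conditional stability, the entropy of the analytic class, and the small-ball estimate --- is already packaged in the lemmas above, so what remains is essentially soft analysis of Gaussian measures. The delicate point I expect is the simultaneous calibration in (i) and (ii): the single pair $(\kappa_{N,2},\delta_N)$ must make the concentration function $O(N\delta_N^2)$ \emph{and} leave room for a sieve of complementary mass $e^{-C_2N\delta_N^2}$ with $C_2>C_1+2$. This is exactly where the logarithmic rescaling --- rather than the polynomial one needed for Matérn priors --- is forced, and it works only because the analytic small-ball exponent is merely $\phi(\rho)\lesssim\log(\rho^{-1})^2$, so that $\phi(\delta_N/\kappa_{N,2})$ stays of order $(\log N)^2$ even though $\delta_N/\kappa_{N,2}=N^{-1/2}(\log N)^2$. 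A secondary bookkeeping issue is to carry the uniform bound $\|\mathcal{G}(\theta)\|_{C(\overline{\Gamma})}\le U$ through the whole argument, which is the reason for working in a Banach space $E$ that embeds into $H^1(\Gamma_\beta)$ and for the mild extension of Lemma \ref{lemma:small-ball-lemma} noted above.
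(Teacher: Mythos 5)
Your proposal is correct and follows essentially the same route as the paper: the same rescaling identity $\kappa_{N,2}^{-1}=\sqrt{N}\delta_N=\log N$, the decentered small-ball bound via Lemma \ref{lemma:small-ball-lemma}, Borell's isoperimetric inequality for the sieve mass, and Lemma \ref{lemma:fourier-analytic} combined with Lemma \ref{lemma:covering-number-analytic} and the sup-norm Lipschitz continuity of $\mathcal{G}$ for the entropy bound. Two local deviations are worth recording. First, you build the sieve as $c_0B_{\mathcal{A}_r}+\delta_N B_{\mathcal{A}_q}$ rather than the paper's $B_{\mathcal{H}_2}(M)+B_\infty(M\delta_N)$ intersected with $\mathcal{R}_2(M)$; this costs you the (correctly sketched, routine) upgrade of the small-ball estimate from $\|\cdot\|_\infty$ to the $\mathcal{A}_q$-norm, but buys you that $\Theta_N$ is a bounded subset of $\mathcal{A}_q(\Gamma_\beta)$ and hence lands in $\mathcal{R}_2(M)$ automatically by Lemma \ref{lemma:fourier-analytic}(iii) --- a containment the paper needs for the downstream application of Lemma \ref{lemma:stability}(ii) and secures instead by an explicit intersection together with the extra mass bound $\Pi_2(\mathcal{R}_2(M)^c)\leq\tfrac12 e^{-C_2N\delta_N^2}$. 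Had you used sup-norm balls for the additive part without that intersection, the lemma as stated would still hold but the proof of Theorem \ref{thm:main-theorem}(ii) would break, so it is worth making this containment explicit. Second, for the constraint $\|\mathcal{G}(\theta)\|_{C(\overline{\Gamma})}\leq U$ in (i) you intersect with the sieve and subtract $\Pi_2(\Theta_N^c)$, whereas the paper restricts to the event $\{\|\theta-\theta_0\|_{H^1(\Gamma_\beta)}\leq M_1\}$ via Fernique and then decouples the two events with the Gaussian correlation inequality; your mass-subtraction argument is valid (fix $C_1'$ from the decentered small ball first, set $C_1=C_1'+1$, then choose $C_2>C_1+2$ and enlarge $c_0$ accordingly, so there is no circularity) and avoids the correlation inequality altogether.
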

\begin{proof}
First we give the form of $\Theta_N$. Define $B_{\mathcal{H}_2}(\delta)$ and  $B_{\infty}(\delta)$ to be the closed norm balls of radius $\delta>0$ in $\mathcal{H}_2$ and $C(\overline{\Gamma}_\beta)$, respectively. That is,
\begin{align}
    B_{\mathcal{H}_2}(\delta)&:=\{f\in \mathcal{H}_2: \|f\|_{\mathcal{H}_2}\leq \delta\},\\
    B_{\infty}(\delta)&:=\{f\in C(\overline{\Gamma}_\beta): \|f\|_{\infty}\leq \delta\},
\end{align}
Recall, also that $B_{\mathcal{H}_2}(M_0)\subset A(M_1)$ for some $M_1=M_1(r,M_0)$ by Lemma \ref{lemma:fourier-analytic} (i).
Then we take
\begin{equation}\label{eq:theta-N-def}
    \Theta_N := (B_{\mathcal{H}_2}(M) + B_{\infty}(M\delta_N)) \cap \mathcal{R}_2(M).
\end{equation}
for $M>0$ sufficiently large determined by $(ii)$ below.\\\\
We also recall the following triangle inequality fact needed for $(ii)$ below: a $C\delta_N$-covering of $B_{\mathcal{H}_2}(M)$ is a $(M+C)\delta_N$-covering of $B_{\mathcal{H}_2}(M) + B_\infty(M\delta_N)$ so that
\begin{equation}
    N(B_{\mathcal{H}_2}(M) + B_\infty(M\delta_N),\|\cdot\|_{\infty},(M+C)\delta_N) \leq N(B_{\mathcal{H}_2}(M),\|\cdot\|_{\infty},C\delta_N).
\end{equation}
This implies for $\tilde{C}$ large enough that
\begin{equation}\label{eq:triangle-property}
    N(\Theta_N,\|\cdot\|_{\infty},\tilde{C}\delta_N) \leq N(B_{\mathcal{H}_2}(M),\|\cdot\|_{\infty},(\tilde{C}-M)\delta_N).
\end{equation}
In addition, we will use repeatedly below that
$$\kappa_{N,2}=\frac{1}{\sqrt{N}\delta_N}.$$
    \noindent $(i)$ We proceed as in \cite[Theorem 2.2.2]{nickl2023}. Recall that $\tilde{\Pi}_2(\mathcal{A}_q(\Gamma_\beta))=1$ for any $0<q<r$. Hence also $\Pi_2(\mathcal{A}_q(\Gamma_\beta))=1$. Fernique's theorem \cite[Theorem 2.1.20]{gine2016} initially gives that $\mathbb{E}[\|\tilde{\theta}_2\|_{q}]\leq D$ for some constant $D$ depending only on the prior $\tilde{\Pi}_{2}$, and next
    \begin{align}\label{eq:excess-mass1}
       \Pi_2(\theta:\|\theta\|_q>M_0) &= \tilde{\Pi}_2(\tilde{\theta}:\|\tilde{\theta}\|_q>M_0\sqrt{N}\delta_N),\\
       &\leq \tilde{\Pi}_{2}(\tilde{\theta}:\|\tilde{\theta}\|_q-\mathbb{E}[\|\tilde{\theta}\|_q]>\frac{1}{2}M_0\sqrt{N}\delta_N),\\
       &\leq e^{-CM_0^2N\delta_N^2},
    \end{align}
    for some sufficiently large constant $M_0=M_0(D)$ and some constant $C=C(\tilde{\Pi}_2)$. By Lemma \ref{lemma:fourier-analytic}, this implies
     \begin{equation}\label{eq:half-prob}
    \Pi_2(\theta:\|\theta\|_{H^1(\Gamma_\beta)}> M_1)\leq e^{-CM_0^2N\delta_N^2}\leq \frac{1}{2},
    \end{equation}
    for $M_0$ large enough depending on $C$ and $M_1=M_1(M_0,r)$.
   Note we have
    $$\|\theta-\theta_0\|_{H^1(\Gamma_\beta)}\leq M_1 \quad \Rightarrow \quad \|\theta\|_{H^1(\Gamma_\beta)} \leq M_1+\|\theta_0\|_{H^1(\Gamma_\beta)} \equiv \bar{M},$$
    which by Lemma \ref{lemma:forward-reg} implies $\|\mathcal{G}(\theta)\|_{C(\overline{\Gamma})}\leq U=U(\bar{M})$, since $$\|e^\theta\|_{H^1(\Gamma_\beta)}\lesssim \|e^\theta\|_{\infty}(1+\|\theta\|_{H^1(\Gamma_\beta)}).$$ 
    Using again Lemma \ref{lemma:forward-reg} and Corollary 2.6.18 \cite{gine2016} permitted since $\theta_0\in \mathcal{H}_2$ and $\Pi_2(\Theta)=1$, we get
    \begin{align}
        \Pi_2(d_{\mathcal{G}}(\theta,\theta_0)&\leq \delta_N, \|\mathcal{G}(\theta)\|_{C(\overline{\Gamma})}\leq U)\\
        &\geq \Pi_2(d_{\mathcal{G}}(\theta,\theta_0)\leq \delta_N, \|\theta-\theta_0\|_{H^1(\Gamma_\beta)}\leq M_1),\\
        &\geq \Pi_2(\|\theta-\theta_0\|_{\infty}\leq K^{-1}\delta_N, \|\theta-\theta_0\|_{H^1(\Gamma_\beta)}\leq M_1),\\
        &\geq e^{-\frac{1}{2}\|\theta_0\|_{\mathcal{H}_{2,N}}^2} \Pi_2(\|\theta\|_{\infty}\leq K^{-1}\delta_N, \|\theta\|_{H^1(\Gamma_\beta)}\leq M_1),\\
        &\geq  e^{-C(\theta_0)\kappa_{N,2}^{-2}} \Pi_2(\|\theta\|_{\infty}\leq K^{-1}\delta_N) \Pi_2(\|\theta\|_{H^1(\Gamma_\beta)}\leq M_1),
    \end{align}
    where we used the Gaussian correlation inequality, see \cite[Theorem 6.2.2]{nickl2023}, and the relation $\|\theta_0\|_{\mathcal{H}_{2,N}}=\kappa_{N,2}^{-1}\|\theta_0\|_{\mathcal{H}_2}$ for the last line. We also note that $K$ depends on $M_1$. Lemma \ref{lemma:small-ball-lemma} implies
    \begin{align}
        -\log \Pi_2(\theta:\|\theta\|_{\infty}\leq K^{-1}\delta_N) &= -\log \tilde{\Pi}_{2}(\tilde{\theta}:\|\tilde{\theta}\|_{L^\infty(\Gamma_\beta)}\leq \kappa_{N,2}^{-1}K^{-1}\delta_N)\\ 
        &\leq C(r) \log \left( \frac{K\kappa_{N,2}}{\delta_N} \right)^2,\\ 
        &= C(r) \log \left( K \sqrt{N} \log(N)^{-2} \right)^2,\\
        &\leq C(r) \left[ \log(K)+\frac{1}{2}\log(N)-2\log(\log(N)) \right]^2,\\
        &\leq C(K,r) \log(N)^2,\\
        &\leq C(K,r)N \delta_N^2, \label{eq:small-ball-est}
    \end{align}
    for a sufficiently large constant $C=C(K,r)$. Equation \eqref{eq:half-prob} shows
    $$\Pi_2(\theta:\|\theta\|_{H^1(\Gamma_\beta)}\leq M_1)\geq 1- \frac{1}{2}= \frac{1}{2}.$$
    The three last displays shows $(i)$ for \eqref{eq:choice-of-delta} and a constant $C_1=C_1(\theta_0,K,M_1,r)$.\\
    $(ii)$ Lemma \ref{lemma:fourier-analytic} implies there exists $M=M(q,M_0)$ such that 
    $$\{f\in \mathcal{A}_q(\Gamma_\beta): \|f\|_q\leq M_0\}\subset \mathcal{R}_2(M),$$
    and hence by \eqref{eq:excess-mass1} we can pick $M_0$ large enough dependent on $C_2$ such that
    \begin{equation}
        \Pi_2(\mathcal{R}_2(M)^c)\leq \frac{1}{2}e^{-C_2N\delta_N^2}.
    \end{equation}
    We simply pick $q=r/2$ to fix constants. Then it suffices to prove
    \begin{equation}
        \Pi_2(B_{\mathcal{H}_2}(M) + B_{\infty}(M\delta_N)) \geq 1-\frac{1}{2}e^{-C_2N\delta_N^2}.
    \end{equation}
    We prove the stronger bound 
    \begin{equation}
        \Pi_2(B_{\mathcal{H}_2}(M) + B_{\infty}(M\delta_N)) \geq 1-e^{-2C_2N\delta_N^2}.
    \end{equation}
    By similar computations as with \eqref{eq:small-ball-est} for $M\geq 1$, we find
    \begin{align}
        -\log \Pi_2(\theta:\|\theta\|_{\infty}\leq M\delta_N) &\leq C(r) \log(\delta_N^{-1}),\\
        &\leq C(r)(1/2\log(N)-\log\log(N)),\\
        &\leq 2C_2\log(N)^2,\\
        &\leq 2C_2N\delta_N^2,
    \end{align}
    for any given $C_2>0$ and $N$ sufficiently large.
    As in \cite[Theorem 2.2.2]{nickl2023} we denote
    $$B_N=-2\Phi^{-1}(e^{-2C_2N\delta_N^2}),$$
    where $\Phi$ is the standard normal cumulative distribution function. Then by \cite[Lemma K.6]{ghosal2017} we have
    $$B_N\leq 2\sqrt{2\log(e^{2C_2N\delta_N^2})}\leq 4 \sqrt{C_2}\sqrt{N}\delta_N.$$
    Then for $M>4\sqrt{C_2}$ such that $B_N\leq M\sqrt{N}\delta_N$ we use the isoperimetric inequality \cite[Theorem 2.6.12]{gine2016} to conclude that
    \begin{align}
        \Pi_2(B_{\mathcal{H}_2}(M) + B_\infty(M\delta_N)) &= \tilde{\Pi}_2(B_{\mathcal{H}_2}(M\sqrt{N}\delta_N) + B_\infty(M\sqrt{N}\delta_N^2)),\\
        &\geq \tilde{\Pi}_2(B_{\mathcal{H}_2}(B_N) + B_\infty(M\sqrt{N}\delta_N^2)),\\
        &\geq \Phi(\Phi^{-1}[\tilde{\Pi}_2(B_\infty(M\sqrt{N}\delta_N^2))] +B_N),\\
        &\geq \Phi(\Phi^{-1}[e^{-2C_2N\delta_N^2}]+B_N),\\
        &=\Phi(-\Phi^{-1}[e^{-2C_2N\delta_N^2}]),\\
        &= 1-\Phi(\Phi^{-1}[e^{-2C_2N\delta_N^2}]),\\
        &= 1- e^{-2C_2N\delta_N^2},
    \end{align}
    using also $\Phi(-x)=1-\Phi(x)$.\\
     $(iii)$ We recall that $B_{\mathcal{H}_2}(M)\subset A(M_1)$ for some $M_1=M_1(M,r)$ by Lemma \ref{lemma:fourier-analytic} so that Lemma \ref{lemma:covering-number-analytic} gives
    \begin{align}
        \log N(B_{\mathcal{H}_2}(M),\|\cdot\|_{\infty},\delta_N)&\leq C(r,M)\log(\delta_N^{-1})^2,\\
        &\leq C(r,M)\left(1/2\log(N)-\log\log(N)\right)^2,\\
        &\leq C(r,M) N\delta_N^2,
    \end{align}
    for $N$ large enough.
    Then using Lemma \ref{lemma:forward-reg} with $m_0=m_0(K,M)$ sufficiently large and \eqref{eq:triangle-property} we get
    \begin{align}
         \log N(\Theta_N, d_{\mathcal{G}}, m_0\delta_N) &\leq \log N(\Theta_N, \|\cdot\|_{\infty}, K^{-1}m_0\delta_N),\\
         &\leq \log N(B_{\mathcal{H}_2}(M), \|\cdot\|_{\infty}, (K^{-1}m_0-M)\delta_N),\\
         &\leq C(r,M) N\delta_N^2,
    \end{align}
    Note $M$ depends only on $r$ and $C_2$ through $M_0$.
\end{proof}
\begin{remark}\label{remark:generalization-dimension}
Extending Lemma $\ref{lemma:small-ball-lemma}$ and $\ref{lemma:analytic-consistency}$ to $m>1$ and other exponential decay is straightforward. Indeed, define a Gaussian prior by the restriction to $\Gamma_\beta \subset [-\pi,\pi)^m$ of the random series
\begin{equation}
    \tilde{\theta}_2 = \sum_{k \in \mathbb{Z}^m} g_k e^{-\frac{r}{2} |k|}\phi_k, \qquad g_k \stackrel{i.i.d}{\sim} N(0,1).
\end{equation}
    This is an element of $\mathcal{A}_{q,m}(\Gamma_\beta)$ a.s for $q<r$ and its RKHS is $\mathcal{H}_2=\mathcal{A}_{r,m}(\Gamma_\beta)$. Then Lemma \ref{lemma:small-ball-lemma} follows in the same way by noting $B_{\mathcal{H}_2}\subset A(M_1)$ for some $M_1=M_1(r)$ by Lemma \ref{lemma:fourier-analytic} (i). Given a Lipschitz continuous forward map $\mathcal{G}$, Lemma \ref{lemma:analytic-consistency} follows for $\delta_N=N^{-1/2}\log(N)^\zeta$ for some exponent $\zeta$ dependent on $m$.
\end{remark}
\begin{proof}[Proof of \ref{thm:main-theorem} (ii)]
    By Lemma \ref{lemma:analytic-consistency}, conditions $(1.32)$ and $(1.33)$ of Theorem 1.3.2 \cite{nickl2023} are satisfied for the choice \eqref{eq:theta-N-def} of $\Theta_N$.  Lemma \ref{lemma:analytic-consistency} $(ii)$ and the bound on the Hellinger distance $h(p_\theta,p_\vartheta)\leq \frac{1}{2}d_{\mathcal{G}}(\theta,\vartheta)$, see \cite[Proposition 1.3.1]{nickl2023}, implies 
    \begin{equation}\label{eq:bound-in-last-theorem}
        N(\tilde{\Theta}_N,h,\frac{1}{2}m_0\delta_N)\leq N(\Theta_N,d_{\mathcal{G}},m_0\delta_N)\leq e^{C(C_2,r)N\delta_N^2}
    \end{equation}
    hence for all $\varepsilon>2m_0\delta_N$
    \[N(\tilde{\Theta}_N,h,\frac{\varepsilon}{4})\leq e^{C(C_2,r)N\delta_N^2},\]
    with
    \[\tilde{\Theta}_N:=\{p_\theta:\theta\in \Theta_N\}.\]
    Note the right-hand side of \eqref{eq:bound-in-last-theorem} is independent of such $\varepsilon$. Setting $\varepsilon=m\delta_N$ for $m>2m_0$,  Theorem 7.1.4 of \cite{gine2016} gives the existence of statistical tests $\Psi_N:(\mathbb{R}\times \Gamma)^N\rightarrow \{0,1\}$ satisfying
    \[P_{\theta_0}^N(\Psi_N =1 ) \rightarrow 0,\]
    as $N\rightarrow \infty$, and
    \[\sup_{\theta\in \Theta_N:h(p_\theta,p_{\theta_0})>m\delta_N} E_\theta^N(1-\Psi_N)\leq e^{-\kappa N\delta_N^2}\]
    for $m$ large enough also depending on $C(C_2,r)$ and $\kappa$. Then the proof of Theorem 1.3.2 \cite{nickl2023} implies that for all $0<b<C_2-C_1-2$ we can choose $C_0=C_0(C_1,C_2,r,m_0,b,U)$ large enough such that
    \[P_{\theta_0}^N \left( \Pi_N(\theta \in \Theta_N:d_{\mathcal{G}}(\theta,\theta_0)\leq C_0 \delta_N|D_N)\leq 1-e^{-bN\delta_N^2} \right)\rightarrow 0.\]
    Lemma \ref{lemma:stability} (ii) implies
    \[\{\theta\in \Theta_N: d_{\mathcal{G}}(\theta,\theta_0)\leq C_0\delta_N \} \subset \{\theta \in \Theta_N: \|\theta-\theta_0\|_{L^2(\Gamma_{\beta,\epsilon})}\leq KC_0^\sigma \delta_N^\sigma\}\]
    so that we also have
    \[P_{\theta_0}^N \left( \Pi_N(\theta \in \Theta_N: \|\theta-\theta_0\|_{L^2(\Gamma_{\beta,\epsilon})}\leq KC_0^\sigma \delta_N^\sigma|D_N)\leq 1-e^{-bN\delta_N^2} \right)\rightarrow 0.\]
    Then the argument of Theorem 2.3.2 \cite{nickl2023} applies in the same way here to the effect that
    \[\|\mathbb{E}_2[\theta | D_N] - \theta_0\|_{L^2(\Gamma_{\beta,\epsilon})} \rightarrow 0 \qquad \text{in $P_{\theta_0}^N$-probability}\]
    with rate $\delta_N^\sigma$ as $N\rightarrow \infty$.
\end{proof}

\section{Additional figures from experiments}\label{sec:morefigures}

\begin{figure}[H]
\centering
  \includegraphics[width=0.95\linewidth]{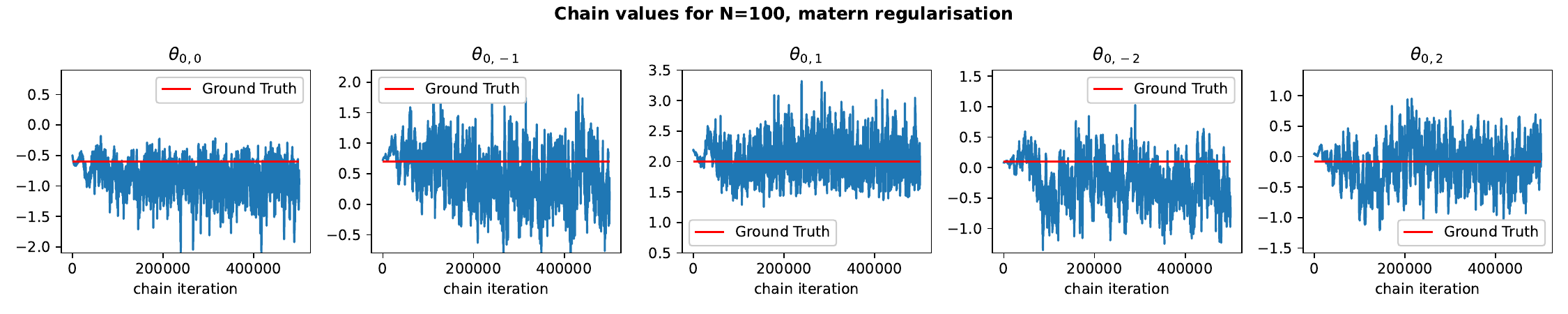}

  \centering
  \includegraphics[width=0.95\linewidth]{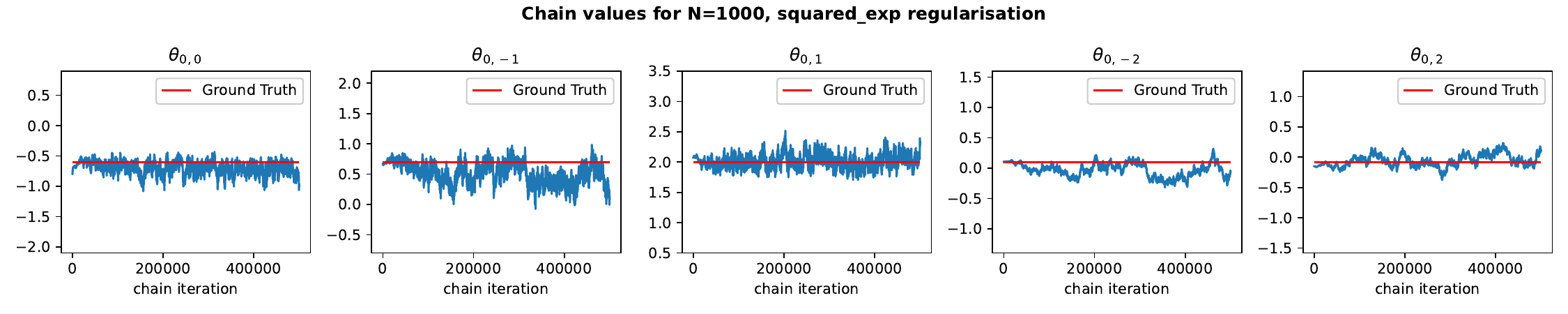}

\caption{Chains from the MCMC from the Laplace problem with respectively Matérn regularisation and $N=100$ and squared exponential regularisation and $N=1000$. The method used is adaptive Monte Carlo as described in \ref{sec:mcmc}, with a total of 500k iterations.}
\label{fig:chains}
\end{figure}

\begin{figure}[H]
\centering
  \includegraphics[width=0.95\linewidth]{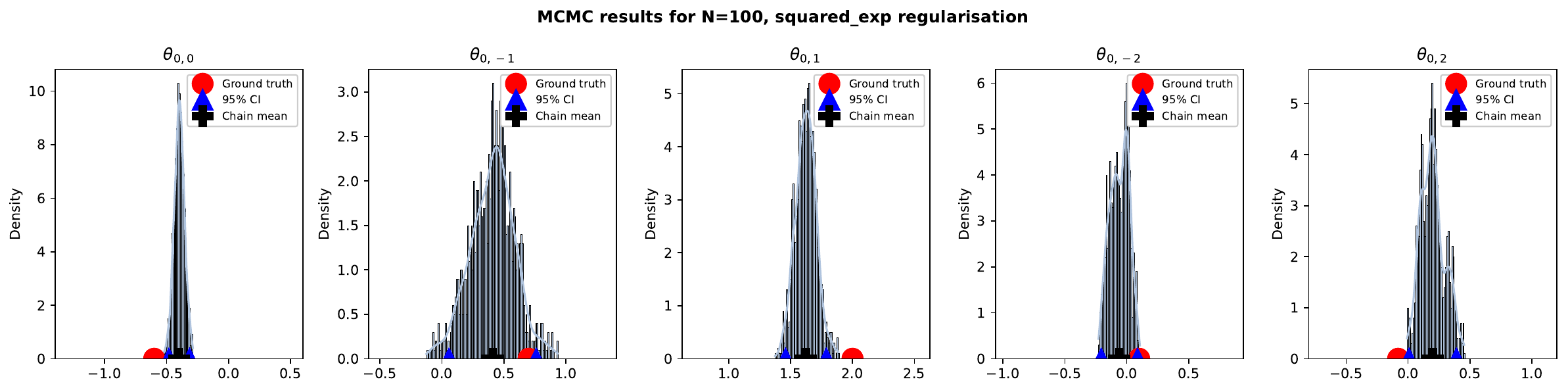}

  \centering
  \includegraphics[width=0.95\linewidth]{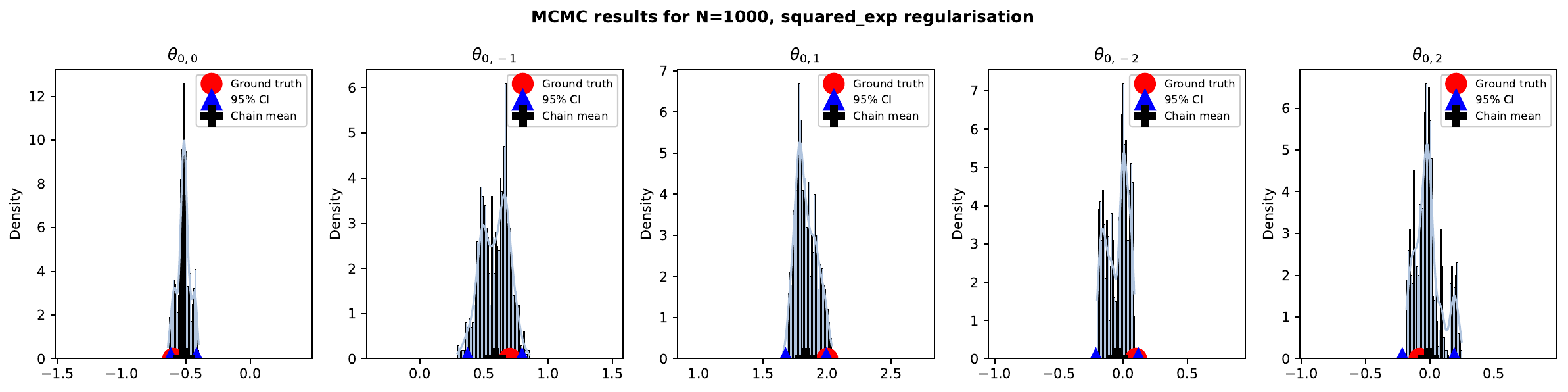}

\caption{Posterior densities from the Stokes experiments \ref{sec:stokes}, constructed using 1000 samples taken equidistantly from the chain after removal of the burn-in.
}
\label{fig:stokes_posterior}
\end{figure}

\bibliographystyle{siamplain}
\bibliography{references}

\end{document}